\newtheorem{theorem}{Theorem}[section]
\newtheorem{lemma}[theorem]{Lemma}
\theoremstyle{definition}
\newtheorem{definition}[theorem]{Definition}
\theoremstyle{remark}
\newtheorem{remark}[theorem]{Remark}
\newtheorem{proposition}{Proposition}[section]
\begin{document}

\title[Convergence of a semi-Lagrangian scheme for the ES-BGK model]{Convergence of a semi-Lagrangian scheme for the ellipsoidal BGK model of the Boltzmann equation}

\author{Giovanni Russo}
\address{Universita di Catania, Catania, Italia}
\email{russo@dmi.unict.it}


\author{Seok-Bae Yun}
\address{Department of mathematics, Sungkyunkwan University, Suwon 440-746, Republic of Korea }
\email{sbyun01@skku.edu}



\keywords{BGK model, ellipsoidal BGK model, Boltzmann equation, semi-Lagrangian scheme, error estimate}
\thanks{S.-B. Yun was supported by Basic Science Research Program through the National Research Foundation of Korea(NRF) funded by the Ministry of Education(NRF-2016R1D1A1B03935955)}
\begin{abstract}
The ellipsoidal BGK model is a generalized version of the original BGK model designed to reproduce the physical Prandtl number in the Navier-Stokes limit.
In this paper, we propose a new implicit semi-Lagrangian scheme for the ellipsoidal BGK model, which, by exploiting special structures of the ellipsoidal Gaussian, can be transformed
into a semi-explicit form, guaranteeing the stability of the implicit methods and the efficiency of the explicit methods at the same time.
We then derive an error estimate of this scheme  in a weighted $L^{\infty}$ norm.
Our convergence estimate holds uniformly in the whole range of relaxation parameter $\nu$ including $\nu=0$, which corresponds to the original BGK model.
\end{abstract}

\maketitle
\section{Introduction}
The BGK model \cite{BGK,Wal} has been widely used as an efficient model for the Boltzmann equation
because the BGK model is much more amenable to numerical treatment, and  still maintains many of the important qualitative properties of the Boltzmann equation.
But several short-commings are also reported where this model fails to reproduce the correct physical data of the Boltzmann equation. One such example is the Prandtl number,
which is defined as the ratio between the viscosity and the heat conductivity.
The Prandtl number computed via the BGK model does not match with the one derived from the Boltzmann equation, resulting in the incorrect hydrodynamic limit at the Navier-Stokes level.
To overcome this, Holway \cite{Holway} suggested a generalized version of the BGK model
 by replacing the local Maxwellian with an ellipsoidal Gaussian
parametrized by a free parameter $-1/2<\nu<1$. This model is called the ellipsoidal
BGK model (ES-BGK model), whose initial value problem  reads
\begin{align}\label{ESBGK}
\begin{split}
&\partial_t f+v\cdot \nabla f=\frac{1}{\kappa}A_{\nu}(\mathcal{M}_{\nu}(f)-f),\cr
&\qquad f(x,v,0)=f_0(x,v).
\end{split}
\end{align}
The velocity distribution function $f(x,v,t)$ is the number density of the particle system on the phase point $(x,v)\in\mathbb{T}^{d_1}\times\mathbb{R}^{d_2}$ ($d_1\leq d_2$)
at time $t\in\mathbb{R}_+$. Here, $\mathbb{T}$ denotes the unit interval with periodic boundary condition and $\mathbb{R}$ is the whole real line.
The Knudsen number $\kappa$ is a dimensionless number defined by the
ratio between the mean free path and the characteristic length.
For later convenience, we allowed a slight abuse of notation so that the convection term $v\cdot\nabla_xf$ is understood as
\[
v\cdot\nabla_xf=\sum_{1\leq i\leq d_1}v^i\partial_{x^i}f.\]
The collision frequency $A_{\nu}$ takes various
forms depending on modeling assumptions. In this paper, we only consider the
fixed collision frequency: $A_{\nu}=(1-\nu)^{-1}$. 
The ellipsoidal Gaussian $\mathcal{M}_{\nu}(f)$ reads:
\begin{eqnarray*}
\mathcal{M}_{\nu}(f)=\frac{\rho}{\sqrt{\det(2\pi\mathcal{T}_{\nu}})}\exp\left(-\frac{1}{2}(v-U)^{\top}\mathcal{T}^{-1}_{\nu}(v-U)\right),
\end{eqnarray*}
where the macroscopic density, velocity, temperature and the stress tensor are defined by
\begin{align*}
\rho(x,t)&=\int_{\mathbb{R}^{d_2}}f(x,v,t)dv,\cr
\rho(x,t)U(x,t)&=\int_{\mathbb{R}^{d_2}}f(x,v,t)vdv,\cr
d_2\rho(x,t) T(x,t)&=\int_{\mathbb{R}^{d_2}}f(x,v,t)|v-U(x,t)|^2dv,\cr
\rho(x,t)\Theta(x,t)&=\int_{\mathbb{R}^{d_2}}f(x,v,t)(v-U)\otimes(v-U)\,dv.
\end{align*}
The temperature tensor $\mathcal{T}_{\nu}$ is given by a convex combination of $T$ and $\Theta$:
\begin{align*}
\mathcal{T}_{\nu}
=(1-\nu) T Id+\nu\Theta,
\end{align*}
where $Id$ is the $d_2\times d_2$ identity matrix. The ellipsoidal relaxation operator satisfies the following cancellation property:
\begin{eqnarray*}
\int_{\mathbb{R}^{d_2}} \big(\mathcal{M}_{\nu}(f)-f\big)\left(1,v,|v|^2\right)dv=0,
\end{eqnarray*}
which leads to the conservation of mass, momentum and energy:
\[
\frac{d}{dt}\int_{\mathbb{R}^{d_1}\times\mathbb{R}^{d_2}}fdxdv=\frac{d}{dt}\int_{\mathbb{R}^{d_1}\times\mathbb{R}^{d_2}}vfdxdv=\frac{d}{dt}\int_{\mathbb{R}^{d_1}\times\mathbb{R}^{d_2}}f|v|^2dxdv=0.
\]

When Holway first suggested this model, $H$-theorem was not verified, which was the main reason why the ES-BGK model has been neglected in the literature until very recently.  It was resolved in \cite{ALPP} (See also \cite{B3,Yun4}):
\begin{eqnarray*}
\frac{d}{dt}\int_{\mathbb{R}^{d_1}\times\mathbb{R}^{d_2}}f\ln f\,dxdv\leq 0,
\end{eqnarray*}
and ignited the interest on this model \cite{ABLP,B3,FJ,GT,MS,MWR,Stru,Yun2,Yun3,Yun4,Z-Stru}.

It can be verified via the Chapman-Enskog expansion that the Prandtl number computed using the ES-BGK model is $1/(1-\nu)$.
Therefore, the correct physical Prandtl number can be recovered by choosing appropriate $\nu$, namely, $\nu=1-1/Pr\approx-1/2$, where $Pr$ denotes the correct Prandtl number. When $\nu=0$, the ES-BGK model reduces to the original BGK model.
Hence, any results for the ES-BGK model automatically hold for the original BGK model either.
We also mention that, in the range $-1/2<\nu<1$,
the only possible equilibrium state of the ellipsoidal relaxation operator is the usual Maxwellian, not the ellipsoidal Gaussian.
That is, the only solution satisfying the relation $\mathcal{M}_{\nu}(f)=f$ is the local Maxwellian (See \cite{Yun2,Yun4}
for the proof.):
\[
f=\mathcal{M}_0(f)=\frac{\rho}{\sqrt{(2\pi T)^{d_2}}}\exp\left(-\frac{|v-U|^2}{2T}\right).
\]
Therefore, the ES-BGK model correctly captures the two most important asymptotic behavior of the Boltzmann equation, namely, the time asymptotic limit
($t\rightarrow\infty$) and the hydrodynamic limit at the Euler level ($\kappa\rightarrow0$). \newline


In recent years, semi-Lagrangian (SL in short) methods for the numerical solutions of kinetic equations have attracted the attention of several authors, see for example, the papers \cite{besse1,besse2,BS,CDM,CV,CaVe-07,CMS-10,FB,Qiu-Ch-10,Qiu-Shu-11,SRBG-99,XRQ} on the use of SL schemes for the Vlasov-Poisson equations and the works \cite{DMRL-15,FR,GRS,Qiu-Russo,RSY} on SL schemes for the BGK model. SL methods are very attractive, since they allow one to use large time step, with no CFL-type accuracy restriction typical of Eulerian-based schemes.
In the limit of small Knudsen number, they can capture the underlying fluid dynamic limit with an implicit L-stable scheme adopted for the treatment of the collision term. It is also relatively easy to obtain high accuracy, by using high order reconstruction of the solution at the foot of the characteristics. One of the difficulties with semi-Lagrangian schemes is that they are naturally constructed in non-conservative form.
Several papers have been devoted to the construction of conservative SL schemes (see, for example, \cite{CMS-10,Qiu-Ch-10,Qiu-Shu-11,XRQ}). SL schemes have also been used to solve specific problems where accurate solutions are needed. In \cite{Aoki-PRE14}, the  authors adopt a SL method to study the decay of an oscillating plate in a rarefied gas described by the BGK model and compare the results with those obtained by a scheme specifically designed for the problem. For a survey on numerical schemes on BGK model, we refer to the review paper \cite{DiMarco-Pareschi-review-2014,Miu} and references therein.

In view of the increasing interest in the subject, our aim is to introduce a new family of semi-Lagrangian scheme for the ES-BGK model and to investigate their convergence properties. Numerical results of the schemes will be presented in a separate paper.
\subsection{Implicit semi-Lagrangian scheme}
First, we propose an implicit semi-Lagrangian scheme.
In the numerical computation of the collisional or relaxational kinetic equations, it is common to employ
the so called splitting method, which amounts to computing the transport part:
\begin{eqnarray}\label{transport}
\partial_tf+v\cdot\nabla_xf=0,
\end{eqnarray}
and the relaxational time evolution:
\begin{eqnarray}\label{relaxational evolution}
\frac{df}{dt}=\frac{1}{\kappa}A_{\nu}\left\{\mathcal{M}_{\nu}(f)-f\right\},
\end{eqnarray}
separately. The most naive way for this is to use the forward in time methods and the explicit Euler type method
respectively for (\ref{transport}) and (\ref{relaxational evolution}). It is, however, well-known that the first procedure leads to the restriction
on the temporal grid size due to the CFL condition: $\triangle t<\triangle x/\max_j|v_j|$,
while the second procedure entails a stability condition: $\triangle t <C\kappa$ for some constant $C>0$,
resulting in the following two scale restriction on the size of time step:
\[
\triangle t<\min\left\{C\kappa, \triangle x/\max_j |v_j|\right\}.
\]
Since the two parameters of the right-hand side are independent of each other,
the discrepancy between these two scale can get arbitrary large, and the scheme becomes severely resource-consuming accordingly.  Such a stiffness problem  has been one of the key difficulties in developing efficient stable schemes for kinetic equations.
In this paper, we propose a new semi-Lagrangian scheme, which combines two numerical methods known to guarantee stable performances,
namely the semi-Lagrangian treatment for the transport part (\ref{transport}), and the implicit Euler for the evolution part (\ref{relaxational evolution}),
to overcome the CFL restriction and secure the stability of the scheme over the large range of Knudsen number at the same time:
\[
\frac{f^{n+1}_{i,j}-\tilde{f}^n_{i,j}}{\triangle t}=\frac{1}{\kappa}A_{\nu}\big\{\mathcal{M}_{\nu,j}(f^{n+1}_i)-f^{n+1}_{i,j}\big\}.
\]
Here  $\widetilde{f}^n_{i,j}$ denotes the linear reconstruction. (See Definition \ref{def21}.)
At first sight, the scheme seems very time consuming due to the implicit implementation of the relaxation part.
In the case of the original BGK model ($\nu=0$),  such difficulties can be circumvented by a clever trick using the fact that (1) the local Maxwellian depends on
the distribution function only through the conservative macroscopic fields, and (2) the macroscopic fields satisfy the following identities:
\begin{equation}\label{con}
\rho^{n+1}_i=\widetilde{\rho}^n_i,\quad U^{n+1}_i=\widetilde{U}^n_i,\quad T^{n+1}_i=\widetilde{T}^n_i,\quad
\end{equation}
with small error, enabling one to explicitly solve for the numerical solutions \cite{FR,PP,RSY}. Here, the macroscopic variables with tilde are those constructed from $\tilde{f}^n_{i,j}$. (see Section 2.)
In this surprising turn of events, the two seemingly contradicting properties: the stable performance of the implicit scheme and the efficiency of the explicit scheme, are reconciled. Such nice feature, of course, can never be expected for the Boltzmann type collision operators.

In the case of the ES-BGK model, however, the conservation laws are not sufficient to make this trick work, since the ellipsoidal
Gaussian contains the stress tensor, which is not a conserved quantity.
Even though we cannot expect the stress tensor to satisfy similar conservation identities as (\ref{con}), we observe that the following approximation holds with small error (See (\ref{Theta n+1}) in Section 2):
\begin{eqnarray*}
\Theta^{n+1}_i\approx\frac{\triangle t}{\kappa+\triangle t}\widetilde{T}^n_iId+\frac{\kappa}{\kappa+\triangle t}\widetilde{\Theta}^n_i,
\end{eqnarray*}
which  enables us to rewrite the implicit ellipsoidal Gaussian in a semi-explicit manner.
The resultant scheme can now be written in an explicit manner as (See Section 2.2)
\begin{align*}
f^{n+1}_{i,j}&=\frac{\kappa}{\kappa+A_{\nu}\triangle t}\widetilde{f}^n_{i,j}+\frac{A_{\nu}\triangle t}{\kappa+A_{\nu}\triangle t}\mathcal{M}_{\widetilde{\nu},j}(\tilde{f}^n_i).
\end{align*}
We remark that the implementation of the scheme and its check on several numerical tests  will be reported in an independent paper \cite{BCRY}.
\newline

\subsection{$L^{\infty}$ convergence theory} We then develop a convergence theory for this scheme that will guarantee the credibility of the method.
In this regard, our main result is the following error estimate stated in Theorem 3.2 in Section 3:
\begin{eqnarray*}
\|f(T^f)-f^{N_t}\|_{L^{\infty}_q}\leq C\Big\{\triangle x+\triangle v+\triangle t+\frac{\triangle x}{\triangle t}~\Big\},
\end{eqnarray*}
for some constant $C>0$.  Compared to our previous result \cite{RSY} where the convergence of a semi-Lagrangian scheme for the original BGK model was established in $L^1$ space:
\begin{eqnarray*}
\|f(T^f)-f^{N_t}\|_{L^{1}_2}\leq C\Big\{\triangle x+\triangle v+\triangle t+\frac{\triangle x}{\triangle t}~\Big\},
\end{eqnarray*}
we make four improvements. The error estimate in the spatial node is improved from $\triangle x$ to $(\triangle x)^2$ by assuming additional regularity on the initial data and refining the analysis of the interpolation part. This enables one to recover the first order error estimate by choosing $\triangle =\triangle t$.  Secondly, we impose size restriction only on the velocity nodes, whereas in \cite{RSY}, we needed to restrict the size of all the node size: $\triangle x$, $\triangle v$ and $\triangle t$. Thirdly, we develop a theory to measure the error in a weighted $L^{\infty}$ norm instead of the weighted $L^1$ norm, which provides a more clear and detailed picture on the convergence of the scheme, since the  error estimate in $L^{\infty}$ norm gives a node-wise convergence estimate. Finally, the proof is greatly simplified, enabling one to extend the convergence theory to the whole range of relaxation parameter $-1/2<\nu<1$. As a result, we derive a convergence estimate that is uniform in $-1/2\nu<1$. Note that, since the above convergence estimate is valid for the whole range of relaxation parameter $-1/2<\nu<1$ uniformly, it also holds for the original BGK model, which corresponds to $\nu=0$.

The most important step of the proof is the derivation of the following uniform stability estimate (For notations, see the next subsection):
\begin{align*}
C_{0,1}e^{-\frac{A_{\nu}}{\kappa}T^f}e^{-C_{0,2}|v_j|^{\alpha}}\leq f^{n}_{i,j}\leq e^{\frac{(C_{\mathcal{M}}-1) T^f}{\kappa+A_{\nu}\triangle t}}\|f^0\|_{L^{\infty}_q}(1+|v_j|)^{-q},
\end{align*}
which comes from the uniform-in-$n$ control of the discrete ellipsoidal Gaussian in a weighted $L^{\infty}$ norm (See Lemma \ref{Control Max}):
\[
\|\mathcal{M}_{\widetilde{\nu}}(f^n)\|_{L^{\infty}_q}\leq C_{\mathcal{M}}\|f^n\|_{L^{\infty}_q}.
\]
Two  technical issues arise in the process of obtaining the above estimates.
First, we need to show that the discrete temperature tensor $\widetilde{\mathcal{T}}^n_{\tilde{\nu},i}$ is
strictly positive definite uniformly in $n$:
\[
k^{\top}\big\{\widetilde{\mathcal{T}}^n_{\widetilde{\nu},i}\big\}k\geq C_{\nu, q, \kappa, T^f}>0~\mbox{ for all }\kappa\in \mathbb{S}^2.
\]
Otherwise, since the discrete ellipsoidal Gaussian involves the inverse of $\mathcal{T}^n_{\tilde{\nu},i}$
and $\det(\mathcal{T}^n_{\tilde{\nu},i})$, it may blow up as  $\mathcal{T}^n_{\tilde{\nu},i}$ approaches arbitrarily close to zero.
Second issue is more subtle. It turns out that we need to show that ratios such as
\begin{align*}
\widetilde{\rho}^n_i\widetilde{T}^n_{i}/ \|\widetilde{f}^n\|_{L^{\infty}_q},\quad
\|\widetilde{f}^n\|_{L^{\infty}_q}/\widetilde{\rho}^n_i
\end{align*}
remain strictly larger than $\triangle v$, to guarantee the existence of proper decomposition of the macroscopic fields to derive necessary moment estimates. (See Lemma \ref{moments estimate discrete}.) The restriction on the velocity node in Theorem 3.2 mostly comes from this subtle technical issue.

%
%
%
%
%
\subsection{Notation} Before we finish this introduction, we summarize the notational convention kept throughout this paper:
\begin{itemize}
\item $C$ denotes generic constants. The exact value may change in each line, but they are explicitly computable in principle.
\item We will use lower indices $n$, $i$, $j$ exclusively for time, space and velocity variable respectively. For example
$x_i$, $v_j$, $t_n$.
\item We use upper indices for the components of vectors as $v=(v^{1},v^2,v^3)$, while the lower indices
are reserved for the spatial, velocity and temporal nodes.
\item $T^f$ will denote the final time, whereas $T_f$ represents for the local temperature constructed from the distribution function $f$.
\item We use the following notation for weighted $L^{\infty}$-Sobolev norm for continuous solution:
\begin{align*}
\displaystyle \|f\|_{L^{\infty}_q}&=\sup_{x,v}|f(x,v)(1+|v|)^q|,\quad\|f\|_{W^{\ell,\infty}_q}=\sum_{|\alpha|+|\beta|\leq \ell}\|\partial^{\alpha}_{\beta}f\|_{L^{\infty}_q},
\end{align*}
where, $\alpha,\beta\in \mathbb{Z}_+$ and, the differential operator $\partial^{\alpha}_{\beta}$ stands for
$\partial^{\alpha}_{\beta}=\partial^{\alpha}_{x}\partial^{\beta}_{v}$.
\item For any sequence $a^n_{i,j}$, we use the following  notation for the weighted $L^{\infty}$-norm for of the sequence:
\begin{align*}
\|a^n\|_{L^{\infty}_q}&=\sup_{i,j}|a^n_{i,j}(1+|v_{j}|)^q|.
\end{align*}
\item In view of the above norms, $\|f(t_n)-f^n\|_{L^{\infty}_q}$ is understood, with a slight abuse of notation, as
\begin{align*}
\|f(t_n)-f^n\|_{L^{\infty}_q}&=\sup_{i,j}\left|\,\big\{f(x_i,v_j,t_n)-f^n_{i,j}\big\}(1+|v_{j}|)^q\right|,
\end{align*}
for simplicity.
\end{itemize}
%
%
The paper is organized as follows. In the following Section 2, we derive
a semi-Lagrangian scheme for the ES-BGK model.
The main convergence result of this paper is presented in Section 3.
In section 4, we establish some technical lemmas.
Section 5 is devoted to the stability estimate of the scheme.
In Section 6, we  transform the ES-BGK
model (\ref{ESBGK}) to a form consistent with our scheme.
In Section 7, we estimate the discrepancy of discrete Gaussian and the continuous one.
Finally, we prove the convergence of our scheme in Section 7.

%
%
%
%
\section{ Description of the numerical scheme}
We fix $d_1=1$ and $d_2=3$ case with periodic boundary condition throughout this paper
in order to stay in the simplest possible framework. We believe that the analysis of this paper can be extended to more general conditions
such as higher dimensions in $x$ and/or different boundary conditions, although such extensions may give rise to unexpected difficulties. This will be a topic of future work.
Note that, in contrast to the original BGK model, the velocity domain must be at least 2-dimensional for the ellipsoidal BGK to be meaningful. Otherwise the model reduces to the original BGK model.
We choose a constant time step $\triangle t$ with final time $T^f$. The spatial domain and the velocity domain are divided into
uniform grids with mesh size $\triangle x$, $\triangle v$ respectively: 
\begin{align*}
&t_n=n\triangle t,\hspace{1cm}n=0,1,...,N_t, \cr
&x_i=i\triangle x ,\hspace{1.1cm}i=0,\pm1,...,\pm N_x, \pm (N_x+1),\cdots
\end{align*}
where $N_t\triangle t = T^f$, $N_x\triangle x=1$, and
\begin{align*}
v_j=(v_{j_{1}}, v_{j_{2}},v_{j_{3}})=(j_1\triangle v, j_2\triangle v,j_3\triangle v),\quad j=(j_1,j_2,j_3)\in \mathbb{Z}^3.
\end{align*}
Note that the spatial node is defined on the whole line instead of unit interval, even though we are considering periodic
problem. Periodicity will be imposed on the initial data $f_0$, which is defined on the whole line with period 1.
This facilitates the proof in several places.
If not specified otherwise, we assume throughout this paper that $n\leq N_t$, to avoid unnecessary repetition.
 We  denote the approximate solution of $f(x_i,v_j,t_n)$ by $f^n_{i,j}$.
To describe the numerical scheme more succinctly, we introduce the following convenient notation.
\begin{definition}\label{def21}
(1)~ Let $x(i,j)=x_i-\triangle t \,v_{j_1}$. Then define $s=s(i,j)$ to be the spatial node such that
$x(i,j)$ lies in $[x_s, x_{s+1})$. \newline
(2)~ The reconstructed distribution function $\widetilde f^{n}_{i,j}$ is defined as
\begin{align*}
\widetilde f^{n}_{i,j}=\frac{x(i,j)-x_s}{\triangle x}~f^{n}_{s+1,j}+\frac{x_{s+1}-x(i,j)}{\triangle x}~f^{n}_{s,j}.
\end{align*}
\begin{remark}
$\tilde{f}^n_{i,j}$ is the linear interpolation of  $f^n_{s,j}$ and $f^{n}_{s+1,j}$ on $x(i,j)$.
\end{remark}
\end{definition}
We also need to define the discrete ellipsoidal Gaussian:
\begin{align*}
{\mathcal M}_{\nu,j}(f^{n}_i)=\frac{ \rho^n_{i}}{\sqrt{\det(2\pi{\mathcal{T}}^n_{\nu,i})}}\exp
\Big(-\frac{1}{2}(v_j- {U}^n_{i})\big\{ {\mathcal{T}}^n_{\nu,i}\big\}^{-1}(v_j- {U}^n_{i})\Big),
\end{align*}
and the discrete macroscopic field $\rho^n_{i}, U^n_{i}$, $ T^n_{i}$, ${\Theta}^n_{i}$ and ${\mathcal{T}}^n_{\nu,i}$:
\begin{align*}
&\rho^n_{i} = \sum_{j}  f^{n}_{i,j}(\triangle v)^3,\quad
\rho^n_{i}  U^{n}_{i} = \sum_{j}  f^{n}_{i,j} v_{j}(\triangle v)^3, \quad
3\rho^n_{i}  T^{n}_{i} = \sum_{j}  f^{n}_{i,j} \big|v_{j}-{U}^{n}_i\big|^2(\triangle v)^3,\cr
&\quad\rho^n_{i}  \Theta^{n}_{i} = \sum_{j}  f^{n}_{i,j}( v_j- {U}^n_{i})\otimes( v_j- {U}^n_{i})
(\triangle v)^3,\quad
{\mathcal{T}}^{n}_{\nu,i}= (1-{\nu}) T^{n}_{i}Id+{\nu} \Theta^{n}_{i}.
\end{align*}
%
%
%
%
\subsection{Derivation of the scheme (\ref{main scheme})}
Let $f_j=f(x,v_j,t)$. We rewrite the ES-BGK model (\ref{ESBGK}) in the characteristic formulation:
\begin{align}\label{CF}
\begin{split}
\frac{df_j}{dt}&=\frac{1}{\kappa}A_{\nu}\big(\mathcal{M}_{\nu}(f_j)-f_j\big),\cr
\frac{dx}{dt}&=v_j.
\end{split}
\end{align}
Using implicit Euler scheme on (\ref{CF}), we obtain
\[
f^{n+1}_j(x_{i})-f^n_j(x_i-v_{j_1}\triangle t)=\frac{\triangle t}{\kappa}A_{\nu}\big\{\mathcal{M}_{\nu,j}(f^{n+1})-f^{n+1}_j\big\}(x_i).
\]
We then approximate $f^{n}_j(x_{i})$ by $f^n_{i,j}$, and $f^n_j(x_i-v_{j_1}\triangle t)$ by $\widetilde{f}^n_{i,j}$ to obtain
\begin{align}\label{implicit.euler}
\frac{f^{n+1}_{i,j}-\widetilde{f}^n_{i,j}}{\triangle t}=\frac{1}{\kappa}A_{\nu}\big\{\mathcal{M}_{\nu,j}(f^{n+1}_{i})-f^{n+1}_{i,j}\big\}.
\end{align}
 We attempt to convert (\ref{implicit.euler}) into a semi-explicit scheme keeping beneficial features of implicit schemes such as the stability property.
This idea seems to trace back to \cite{CP,PP}, and successfully implemented in the semi-Lagrangian
setting in \cite{FR,GRS,RSY}.
We first impose the conservation of mass, momentum and energy at the discrete level
 (throughout this subsection $\approx$ means that they are identical up to
negligible error):
\begin{align*}
&\frac{\sum_{j}f^{n+1}_{i,j}\phi(v_j)(\triangle v)^3-\sum_{j}\widetilde{f}^n_{i,j}\phi(v_j)(\triangle v)^3}{\triangle t}\cr
&\hspace{1cm}=\frac{1}{\kappa}A_{\nu}\sum_{j}\big\{\mathcal{M}_{\nu,j}(f^{n+1}_i)\phi(v_j)-f^{n+1}_{i,j}\phi(v_j)\big\}(\triangle v)^3\cr
&\hspace{1cm}\approx0,
\end{align*}
for $\phi(v_j)=1,v_j, \frac{1}{2}|v_j|^2$.
Since r.h.s has a spectral accuracy for fast decaying functions, we can legitimately assme that
\begin{equation}\label{imp}
\sum_{j}f^{n+1}_{i,j}\phi(v_j)\approx\sum_{j}\widetilde{f}^n_{i,j}\phi(v_j).
\end{equation}
Therefore, if we define
\begin{align*}
\widetilde\rho^n_{i} = \sum_{j} \widetilde f^{n}_{i,j}(\triangle v)^3,\quad
\widetilde\rho^n_{i} \widetilde U^{n}_{i} = \sum_{j} \widetilde f^{n}_{i,j} v_{j}(\triangle v)^3,\quad
3\widetilde\rho^n_{i} \widetilde T^{n}_{i} = \sum_{j} \widetilde f^{n}_{i,j} \big|v_{j}-\widetilde{U}^{n}_i\big|^2(\triangle v)^3,
\end{align*}
then (\ref{imp}) gives the following relation:
\begin{align}\label{macro=tildemacro}
\rho^{n+1}_i\approx\tilde{\rho}^{n}_i,~ U^{n+1}_i\approx\tilde{U}^{n}_i,~T^{n+1}_i\approx\tilde{T}^{n}_i.
\end{align}
In the case of the original BGK mdoel ($\nu=0$), identities in (\ref{macro=tildemacro}) are sufficient to conclude that $\mathcal{M}_{0,j}(f^{n+1}_i)\approx\mathcal{M}_{0,j}(\tilde{f}^n_i)$.
But this is not the case for the ellipsoidal case, since the ellipsoidal Gaussian contains the temperature tensor $\mathcal{T}_{\nu}$, which is not
a conserved quantity. 
For this, we introduce
\begin{align*}
\phi^n_{i,j}\equiv (v_j-U^n_i)\otimes(v_j-U^n_i).
\end{align*}
Multiplying $\phi^{n+1}_{i,j}(\triangle v)^3$ on both sides of (\ref{implicit.euler}) and summing over $i$ and $j$, we get:
\begin{align}\label{prelim}
\begin{split}
&\frac{\sum_j f^{n+1}_{i,j}\phi^{n+1}_{i,j}(\triangle v)^3-\sum_j \widetilde{f}^{n}_{i,j}\phi^{n+1}_{i,j}(\triangle v)^3}{\triangle t}\cr
&\hspace{1.5cm}=\frac{1}{\kappa}A_{\nu}\sum_j \Big\{\mathcal{M}_{\nu,j}(f^{n+1}_i)\phi^{n+1}_{i,j}-f^{n+1}_{i,j}\phi^{n+1}_{i,j}\Big\}(\triangle v)^3.
\end{split}
\end{align}
Let's denote the r.h.s of (\ref{prelim}) by R and the l.h.s by L for simplicity.
We then  recall (\ref{macro=tildemacro}) to observe
\begin{align*}
\phi^{n+1}_{i,j}&=(v_j-U^{n+1}_i)\otimes (v_j-U^{n+1}_i)
\approx(v_j-\widetilde{U}^{n}_i)\otimes (v_j-\widetilde{U}^{n}_i)
=\widetilde{\phi}^n_{i,j}.
\end{align*}
Therefore, the second term of $L$ becomes
\begin{align*}
\sum_j \tilde{f}^{n}_{i,j}\phi^{n+1}_{i,j}(\triangle v)^3\approx\sum_j \tilde{f}^{n}_{i,j}\widetilde{\phi}^{n}_{i,j}(\triangle v)^3=\widetilde{\rho}^n_{i}\widetilde{\Theta}^n_{i},
\end{align*}
where $\widetilde{\Theta}^n_{i}$ is defined by
\begin{align*}
\widetilde \rho^n_{i} \widetilde \Theta^{n}_{i} &= \sum_{j} \widetilde f^{n}_{i,j}( v_j-\widetilde {U}^n_{i})\otimes( v_j-\widetilde {U}^n_{i})
(\triangle v)^3,
\end{align*}
so that
\begin{align}\label{lhs}
L=\frac{\rho^{n+1}_i\Theta^{n+1}_i-\widetilde{\rho}^{n}_i\widetilde{\Theta}^n_i}{\triangle t}.
\end{align}
On the other hand, we find for the right hand side,
\begin{align*}
\begin{split}
R&\approx\frac{1}{\kappa}A_{\nu}\Big\{\rho^{n+1}_i\mathcal{T}^{n+1}_{\nu,i}-\rho^{n+1}_i\Theta^{n+1}_i\Big\}\cr
&= \frac{1}{\kappa}A_{\nu}\Big[\rho^{n+1}_i\big\{(1-\nu)T^{n+1}_i Id+\nu\Theta^{n+1}_i\big\}-\rho^{n+1}_i\Theta^{n+1}_i\Big]\cr
&=\frac{1}{\kappa}A_{\nu}(1-\nu)\left[\rho^{n+1}_iT^{n+1}_iId-\rho^{n+1}_i\Theta^{n+1}_i\right]\cr
&=\frac{1}{\kappa}\left\{\rho^{n+1}_iT^{n+1}_iId-\rho^{n+1}_i\Theta^{n+1}_i\right\},
\end{split}
\end{align*}
and recall (\ref{macro=tildemacro}) to see that
\begin{align}\label{rhs}
R=\frac{1}{\kappa}\left\{\rho^{n+1}_iT^{n+1}_iId-\rho^{n+1}_i\Theta^{n+1}_i\right\}
=\frac{1}{\kappa}\left\{\widetilde{\rho}^{n}_i\widetilde{T}^n_iId-\widetilde{\rho}^{n}_i\Theta^{n+1}_i\right\}.
\end{align}
Now, equating (\ref{lhs}) and (\ref{rhs}), we rewrite (\ref{prelim}) as
\begin{align*}
\frac{\rho^{n+1}_i\Theta^{n+1}_i-\widetilde{\rho}^{n}_i\widetilde{\Theta}^n_i}{\triangle t}
=\frac{1}{\kappa}\left\{\widetilde{\rho}^{n}_i\widetilde{T}^n_iId-\widetilde{\rho}^{n}_i\Theta^{n+1}_i\right\}.
\end{align*}
Dividing both sides by $\rho^{n+1}_i$ and gathering relevant terms, we get
\begin{align}\label{Theta n+1}
\Theta^{n+1}_i=\frac{\triangle t}{\kappa+\triangle t}\widetilde{T}^n_iId+\frac{\kappa}{\kappa+\triangle t}\widetilde{\Theta}^n_i.
\end{align}
Therefore, $\mathcal{T}^{n+1}_{\nu,i}$ can be expressed as
\begin{align}\label{tildeT}
\begin{split}
\mathcal{T}^{n+1}_{\nu,i}&=(1-\nu)T^{n+1}_iId+\nu\Theta^{n+1}_i\cr
&\approx(1-\nu)\widetilde{T}^n_iId+\nu\left\{\frac{\triangle t}{\kappa+\triangle t}\widetilde{T}^n_iId+\frac{\kappa}{\kappa+\triangle t} \widetilde{\Theta}^n_i\right\}\cr
&=\left(1-\frac{ \kappa\nu}{\kappa+\triangle t}\right)\widetilde{T}^n_iId+\left(\frac{\kappa\nu}{\kappa+\triangle t}\right)\widetilde{\Theta}^n_{i}\cr
&\equiv(1-\widetilde{\nu})\widetilde{T}^n_iId+\widetilde{\nu}\widetilde{\Theta}^n_i\cr
&\equiv\widetilde{\mathcal{T}}^n_{\tilde{\nu},i},
\end{split}
\end{align}
where we denoted
\[
\widetilde{\nu}=\frac{ \kappa\nu}{\kappa+\triangle t}.
\]
Using (\ref{macro=tildemacro}) and (\ref{tildeT}), the implicitly defined discrete ellipsoidal Gaussian can now be rewritten in an explicit way as:

\begin{align*}
\mathcal{M}_{\nu,j}(f^{n+1}_i)&=\mathcal{M}_{\nu,j}\big(\rho^{n+1}_i,U^{n+1}_i,\mathcal{T}^{n+1}_{\nu,i}\big)
\approx\mathcal{M}_{\nu,j}\big(\widetilde{\rho}^{n}_i,\widetilde{U}^{n}_i,\widetilde{\mathcal{T}}^{n}_{\tilde{\nu},i}\big)\cr
&=\frac{\widetilde{\rho}^n_i}{\sqrt{\det(2\pi \widetilde{\mathcal{T}}_{\tilde{\nu},i}^n)}}\exp\left(-\frac{1}{2}(v_j-\widetilde{U}^{n}_i)^{\top}
\big\{\widetilde{\mathcal{T}}^{n}_{\tilde{\nu},i}\big\}^{-1}
(v_j-\widetilde{U}^{n}_i)\right).
\end{align*}
With a slight abuse of notation, we now denote the r.h.s as $\mathcal{M}_{\tilde{\nu},j}(\tilde{f}^n_i)$.
We should note carefully that (for example in Lemma \ref{rho-rho})
\begin{align*}
\widetilde{\mathcal{T}}^n_{\tilde{\nu},i}=(1-\tilde{\nu})\widetilde{T}^n_{i}+\tilde{\nu}\widetilde{\Theta}^n_i
\neq(1-\nu)\widetilde{T}^n_{i}+\nu\widetilde{\Theta}^n_i=\widetilde{\mathcal{T}}^n_{\nu,i}
\end{align*}
throughout this paper. We then use this to rewrite the implicit scheme (\ref{implicit.euler}) as the following explicit form:
\begin{align*}
f^{n+1}_{i,j}&=\frac{\kappa}{\kappa+A_{\nu}\triangle t}\widetilde{f}^n_{i,j}+\frac{A_{\nu}\triangle t}{\kappa+A_{\nu}\triangle t}\mathcal{M}_{\widetilde{\nu},j}(\tilde{f}^n_i).
\end{align*}

\subsection{Implicit semi-Lagrangian scheme}
%
%
Summarizing, our semi-Lagrangian scheme for the ES-BGK model (\ref{ESBGK}) reads:
\begin{align}\label{main scheme}
f^{n+1}_{i,j}&=\frac{\kappa}{\kappa+A_{\nu}\triangle t}~\widetilde{f}^{n}_{i,j}+\frac{A_{\nu}\triangle t}{\kappa+A_{\nu}\triangle t}~
{\mathcal M}_{\widetilde{\nu},j}(\widetilde{f}^{n}_{i}),
\end{align}
where the discrete ellipsoidal Gaussian ${\mathcal M}_{\widetilde{\nu},j}(\widetilde{f}^{n}_i)$ is defined as follows:
\begin{align*}
{\mathcal M}_{\widetilde{\nu},j}(\widetilde{f}^{n}_i)=\frac{ {\widetilde\rho}^n_{i}}{\sqrt{\det(2\pi\widetilde {\mathcal{T}}^n_{\widetilde{\nu},i})}}\exp
\Big(-\frac{1}{2}(v_j-\widetilde {U}^n_{i})\big\{\widetilde {\mathcal{T}}^n_{\widetilde{\nu},i}\big\}^{-1}(v_j-\widetilde {U}^n_{i})\Big),
\end{align*}
and discrete macroscopic field $\widetilde{\rho}^n_{i}, \widetilde{U}^n_{i}$, $\widetilde T^n_{i}$, $\widetilde{\theta}^n_{i,j}$ and $\widetilde{\mathcal{T}}^n_{\widetilde{\nu},i}$ $(n\geq 1)$ are given by
\begin{align}\label{Conservatives_Original}
&\widetilde\rho^n_{i} = \sum_{j} \widetilde f^{n}_{i,j}(\triangle v)^3,\quad
\widetilde\rho^n_{i} \widetilde U^{n}_{i} = \sum_{j} \widetilde f^{n}_{i,j} v_{j}(\triangle v)^3,\quad
\widetilde3\rho^n_{i} \widetilde T^{n}_{i} = \sum_{j} \widetilde f^{n}_{i,j} \big|v_{j}-\widetilde{U}^{n}_i\big|^2(\triangle v)^3,\cr
&\quad\widetilde \rho^n_{i} \widetilde \Theta^{n}_{i} = \sum_{j} \widetilde f^{n}_{i,j}\big\{( v_j-\widetilde {U}^n_{i})\otimes( v_j-\widetilde {U}^n_{i})\big\}
(\triangle v)^3,\quad
\widetilde{\mathcal{T}}^{n}_{\widetilde{\nu},i}= (1-\widetilde{\nu})\widetilde T^{n}_{i}Id+\widetilde{\nu} \widetilde\Theta^{n}_{i}.
\end{align}
In the last line, $\tilde{\nu}$ denotes
\[
\tilde{\nu}=\frac{\kappa \nu}{\kappa+\triangle t}.
\]
For the initial step $(n=0)$, to ignore the error arising in the discretization of the  initial data and simplify the convergence proof,
we sample values directly from continuous distribution function and macroscopic fields at $t=0$:
\begin{align*}
f^0_{i,j}=f_0(x_i,v_j),\quad \widetilde{f}^0_{i,j}=\widetilde{f_0}(x_i,v_j)=f_0(x_i-v_j\triangle t,v_j),
\end{align*}
and
\begin{align*}
\widetilde{\rho}^0_{i}&=\widetilde{\rho}(x_i,0)=\int_{\mathbb{R}^3}f_0(x_i-v\triangle t, v)dv,\cr
\widetilde{\rho}^0_{i}\widetilde{U}^0_{i}&=\widetilde{\rho}(x_i,0)\widetilde{U}(x_i,0)=\int_{\mathbb{R}^3}f_0(x_i-v\triangle t, v)vdv,\cr
3\widetilde{\rho}^0_{i}\widetilde{T}^0_{i}&=3\widetilde{\rho}(x_i,0)\widetilde{T}(x_i,0)=\int_{\mathbb{R}^3}f_0(x_i-v\triangle t, v)|v-\widetilde{U}^0_i|^2dv.
\end{align*}
\section{Main results}
We are now ready to state our main result. We first record the existence result relevant to our convergence proof.
%
%
%
\begin{theorem}\label{Existence_Theorem} \emph{\cite{Yun3}}~
Let $-1/2<\nu<1$ and $q>5$. Let $f_0$ satisfy $ \|f_0\|_{W^{2,\infty}_q}<\infty$. Suppose further that there exist positive constants $C^{1}_0, C_{0}^{2}$ and $\alpha$ such that
\begin{align*}
f_0(x,v)\geq C_{0}^{1}e^{-C_{0}^{2}|v|^{\alpha}}.
\end{align*}
Then, for any final time $T^f>0$, the ES-BGK model (\ref{ESBGK}) has a unique solution $f\in C([0,T^f], \|\cdot\|_{W^{2,\infty}_q})$
such that
\begin{enumerate}
\item $f$ is bounded in $\|\cdot\|_{W^{2,\infty}_q}$ for $[0,T^f]$:
\begin{align*}
 \|f(t)\|_{W^{2,\infty}_q}\leq C_{1}e^{C_{2} t}\left\{\|f_0\|_{W^{1,\infty}_q}+1\right\}, \quad t\in [0,T^f],
\end{align*}
for some constants $C_{1}$ and $C_{2}$.
\item The macroscopic fields satisfy the following lower and upper bounds:
\begin{align*}
&\|\rho(t)\|_{L^{\infty}_x}+\|U(t)\|_{L^{\infty}_x}
+\|T(t)\|_{L^{\infty}_x}\leq C_qe^{C_qT^f}\!\!\!,~\\
&\hspace{2cm}\rho(x,t)\geq C_{N,q}e^{-C_{N,q} t},\\
&\hspace{0.8cm}k^{\top} \big\{\mathcal{T}_{\nu}(x,t)\big\}k\geq C_{N,q}e^{-C_{N,q}t}>0,  \mbox{ for any }k\in \mathbb{S}^2.\nonumber
\end{align*}
\end{enumerate}
\end{theorem}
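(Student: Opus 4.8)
The plan is to construct the solution by a Picard-type iteration on the mild (Duhamel) formulation along characteristics, and then extend the local solution to all of $[0,T^f]$ using global a priori estimates. Writing the transport semigroup as $(\mathcal{U}(t)g)(x,v)=g(x-vt,v)$, I would first recast (\ref{ESBGK}) in integral form,
\begin{align*}
f(x,v,t)=e^{-\frac{A_\nu}{\kappa}t}f_0(x-vt,v)+\frac{A_\nu}{\kappa}\int_0^t e^{-\frac{A_\nu}{\kappa}(t-s)}\mathcal{M}_\nu(f)(x-v(t-s),v,s)\,ds,
\end{align*}
and define the iteration $f^{(m+1)}=\Phi(f^{(m)})$ with $\Phi$ the right-hand side evaluated at $f^{(m)}$, starting from $f^{(0)}=f_0$. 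The natural solution space is the set of functions on a short interval $[0,T_*]$ satisfying a uniform upper bound $\|f(t)\|_{L^\infty_q}\leq M$, a strictly positive density $\rho(x,t)\geq\delta>0$, and a uniform lower bound on the smallest eigenvalue of $\mathcal{T}_\nu$. These last two conditions are precisely what keep the ellipsoidal Gaussian well defined, since $\mathcal{M}_\nu(f)$ depends on $\{\mathcal{T}_\nu\}^{-1}$ and on $\det\mathcal{T}_\nu$.

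The crux of the argument is a pair of structural observations. First, positivity propagates: because the Gaussian source is nonnegative, the Duhamel formula gives the pointwise lower bound $f(x,v,t)\geq e^{-\frac{A_\nu}{\kappa}t}C_0^1 e^{-C_0^2|v|^\alpha}$ at once, and integrating in $v$ yields $\rho(x,t)\geq C e^{-\frac{A_\nu}{\kappa}t}$; combined with the upper bounds on $\rho,U$ and this pointwise floor, one extracts a lower bound on $T$. Second, and decisively, the range $-1/2<\nu<1$ forces $\mathcal{T}_\nu$ to stay uniformly positive definite: for any $k\in\mathbb{S}^2$, using $0\leq k^\top\Theta k\leq\mathrm{tr}\,\Theta=3T$ one finds $k^\top\mathcal{T}_\nu k\geq(1-\nu)T$ when $\nu\geq0$ and $k^\top\mathcal{T}_\nu k\geq(1+2\nu)T$ when $\nu<0$, both strictly positive once $T$ is bounded below. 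These two facts deliver the invariance of the solution space under $\Phi$ and, through the control of $\mathcal{M}_\nu(f)$ in $L^\infty_q$ by $\|f\|_{L^\infty_q}$, fix the constant $M$.

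With the space fixed, I would establish a Lipschitz estimate $\|\mathcal{M}_\nu(f)-\mathcal{M}_\nu(g)\|_{L^\infty_q}\leq L\|f-g\|_{L^\infty_q}$ by differentiating the Gaussian in its parameters $(\rho,U,\mathcal{T}_\nu)$ and using that each macroscopic field is Lipschitz in $f$ on the solution space; the Duhamel integral then contracts for $T_*$ small, giving local existence and uniqueness via the Banach fixed point theorem. The $W^{2,\infty}_q$ bound follows by differentiating the mild formulation: $\partial_x$ commutes with transport, while $\partial_v$ acting on $f_0(x-vt,v)$ produces the factor $-t\,\partial_x$, so the first- and second-order derivatives close in a Gronwall inequality whose constants are governed by the a priori bounds. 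Finally, since $T_*$ depends only on $M$, $\delta$ and the eigenvalue floor, and since the global estimates yield $\|f(t)\|_{W^{2,\infty}_q}\leq C_1 e^{C_2 t}\{\|f_0\|_{W^{1,\infty}_q}+1\}$ together with the stated field bounds, I would iterate the local construction to reach any final time $T^f$.

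The hard part will be propagating the lower bounds on $\rho$, and through it on the eigenvalues of $\mathcal{T}_\nu$, uniformly along the iteration and over the full interval, since any degeneracy of $\mathcal{T}_\nu$ renders the Gaussian singular through $\det\mathcal{T}_\nu$ and $\{\mathcal{T}_\nu\}^{-1}$; the constraint $-1/2<\nu<1$ is exactly what rescues this. The most calculation-heavy piece is verifying the closure of the second derivatives in $W^{2,\infty}_q$, where the $t\,\partial_x$ terms generated by velocity differentiation couple the orders and must be absorbed by the Gronwall argument.
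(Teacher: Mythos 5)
You should first be aware that the paper contains no proof of this statement: Theorem \ref{Existence_Theorem} is quoted verbatim from the reference \cite{Yun3} and is used as a black box in the convergence analysis, so there is no internal proof to measure your argument against. Judged against the cited literature and against the discrete analogues that the paper \emph{does} prove, your outline follows the standard route and is essentially sound. The mild (Duhamel) formulation along characteristics, the Picard iteration in a space encoding an $L^{\infty}_q$ ceiling together with floors for $\rho$, $T$ and the eigenvalues of $\mathcal{T}_{\nu}$, and the continuation by exponential-in-time a priori bounds is exactly the strategy of the cited work. Moreover, the two structural facts you single out are correct and are precisely the ones this paper establishes in discrete form: the propagation of the Gaussian-in-$v$ lower bound through the Duhamel factor $e^{-A_{\nu}t/\kappa}$ is the continuous counterpart of Lemma \ref{LB f prop}; the bound $k^{\top}\mathcal{T}_{\nu}k\geq\min\{1-\nu,1+2\nu\}\,T$ obtained from $0\leq k^{\top}\Theta k\leq\mathrm{tr}\,\Theta=3T$ is the continuous counterpart of Lemma \ref{equivalence}; your control $\|\mathcal{M}_{\nu}(f)\|_{L^{\infty}_q}\leq C\|f\|_{L^{\infty}_q}$ mirrors Lemma \ref{Control Max}; and the Lipschitz estimate for $\mathcal{M}_{\nu}$ in its parameters mirrors Lemma \ref{ddtM1} (itself quoted from \cite{Yun2}) and Proposition \ref{ContinuityofMaxwellianDiscrete}.

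Two caveats on the details. First, for the temperature floor your phrasing (``upper bounds on $\rho,U$ and the pointwise floor'') is slightly off: the clean argument is that the pointwise floor alone gives $3\rho T=\int f|v-U|^2dv\geq C_{0}^{1}e^{-A_{\nu}t/\kappa}\int e^{-C_{0}^{2}|v|^{\alpha}}|v-U|^2dv\geq c\,e^{-A_{\nu}t/\kappa}$ uniformly in $U$, after which an upper bound on $\rho$ alone yields the floor for $T$; alternatively one uses the moment inequality $\rho\leq C\|f\|_{L^{\infty}_q}T^{3/2}$, the continuous counterpart of Lemma \ref{moments estimate discrete}(1). Second, the genuinely heavy part --- closing the first- and second-order derivative estimates, where differentiating $\mathcal{M}_{\nu}(f)$ requires differentiating $(\rho,U,\mathcal{T}_{\nu})$ and the resulting constants degrade as the floors do --- is only flagged in your proposal, not carried out; note also that your Gronwall scheme would naturally produce $\|f_0\|_{W^{2,\infty}_q}$ on the right-hand side, whereas the theorem as stated has $\|f_0\|_{W^{1,\infty}_q}+1$, a refinement (or typographical artifact) belonging to the cited proof. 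These are gaps of execution rather than of strategy, and they are exactly where the cited reference invests its technical effort.
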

%
%
%
%
Now, we state our main result.
\begin{theorem}\label{maintheorem}
Let $-1/2<\nu<1$. Let $f$ be the unique smooth solution of (\ref{ESBGK}) corresponding to a nonnegative initial datum $f_0$ satisfying the hypotheses of
Theorem \ref{Existence_Theorem}. Let $f^n$ be the approximate solution
constructed iteratively by (\ref{main scheme}) given in Section 2.
Then, there exists a positive number $r_{\triangle v}$, which is explicitly determined in Theorem \ref{stability theorem} in Section 4, such that, if $\triangle v<r_{\triangle v}$, then we have
\begin{eqnarray*}
\|f(T^f)-f^{N_t}\|_{L^{\infty}_{q}}\leq C\Big\{(\triangle x)^2+\triangle v+\triangle t+\frac{(\triangle x)^2}{\triangle t}~\Big\},
\end{eqnarray*}
where $N_t$ is defined by $T^f=N_t\triangle t$ and $C=C(T^f,f_0,q,\kappa,\nu)>0$.
Here, $C$ is uniformly bounded in $\nu$.
\end{theorem}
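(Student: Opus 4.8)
The plan is to follow the classical consistency-plus-stability template, adapted to the weighted $L^\infty_q$ setting. First I would rewrite the exact solution $f$ in a form structurally identical to the scheme (\ref{main scheme}). Integrating (\ref{CF}) along characteristics, applying the implicit Euler discretization to the \emph{exact} solution, and inserting the approximations (\ref{Theta n+1})--(\ref{tildeT}) that replace $\mathcal{M}_{\nu}$ by $\mathcal{M}_{\widetilde\nu}$, one obtains
\begin{align*}
f(x_i,v_j,t_{n+1})=\frac{\kappa}{\kappa+A_\nu\triangle t}\,\bar f^{n}_{i,j}+\frac{A_\nu\triangle t}{\kappa+A_\nu\triangle t}\,\mathcal{M}_{\widetilde\nu,j}(\bar f^{n}_i)+\mathcal{E}^{n}_{i,j},
\end{align*}
where $\bar f^{n}_{i,j}$ is the linear reconstruction of the \emph{exact} nodal values $f(x_s,v_j,t_n),\,f(x_{s+1},v_j,t_n)$ and $\mathcal{E}^n_{i,j}$ collects every local truncation error; this transformation is the content of Section 6. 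Tracking $\mathcal{E}^n_{i,j}$ carefully, the implicit Euler step contributes $O(\triangle t^2)$ per step (via the $W^{2,\infty}_q$ bound of Theorem \ref{Existence_Theorem}); the replacement of $\mathcal{M}_\nu(f^{n+1})$ by $\mathcal{M}_{\widetilde\nu}(\bar f^n)$ is again $O(\triangle t^2)$, since the $O(\triangle t)$ Gaussian mismatch ($|\nu-\widetilde\nu|=O(\triangle t)$ together with the $O(\triangle t)$ approximation (\ref{macro=tildemacro})) is carried by the $O(\triangle t)$ weight $A_\nu\triangle t/(\kappa+A_\nu\triangle t)$; the linear interpolation of the exact $C^2$ solution contributes $O((\triangle x)^2)$; and replacing continuous moments by discrete velocity sums contributes the $O(\triangle v)$ quadrature error analyzed in Section 7.

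Next I would establish the uniform stability estimate, which is the backbone of the argument. Setting $e^n_{i,j}=f(x_i,v_j,t_n)-f^n_{i,j}$ and subtracting (\ref{main scheme}) from the consistency relation yields
\begin{align*}
e^{n+1}_{i,j}=\frac{\kappa}{\kappa+A_\nu\triangle t}\,\widetilde e^{n}_{i,j}+\frac{A_\nu\triangle t}{\kappa+A_\nu\triangle t}\big\{\mathcal{M}_{\widetilde\nu,j}(\bar f^n_i)-\mathcal{M}_{\widetilde\nu,j}(\widetilde f^n_i)\big\}+\mathcal{E}^n_{i,j},
\end{align*}
where $\widetilde e^n_{i,j}=\bar f^n_{i,j}-\widetilde f^n_{i,j}$ is the reconstruction of $e^n$. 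The two coefficients are nonnegative and sum to one, so the scheme is a genuine convex combination, which is what lets me work in the sup-norm with no CFL restriction. To bound the Gaussian difference in $\|\cdot\|_{L^\infty_q}$ I must first know that the discrete ellipsoidal Gaussian built from $\bar f^n$ and $\widetilde f^n$ is well defined and uniformly bounded, i.e.\ Lemma \ref{Control Max}. This forces two facts, proved jointly by induction on $n$: (i) the discrete temperature tensor $\widetilde{\mathcal T}^n_{\widetilde\nu,i}$ stays uniformly positive definite, so that $\{\widetilde{\mathcal T}^n_{\widetilde\nu,i}\}^{-1}$ and $\det\widetilde{\mathcal T}^n_{\widetilde\nu,i}$ do not degenerate; and (ii) the moment ratios $\widetilde\rho^n_i\widetilde T^n_i/\|\widetilde f^n\|_{L^\infty_q}$ and $\|\widetilde f^n\|_{L^\infty_q}/\widetilde\rho^n_i$ stay bounded away from $\triangle v$, which is exactly where the hypothesis $\triangle v<r_{\triangle v}$ enters and which feeds the discrete moment bounds of Lemma \ref{moments estimate discrete}. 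Granting (i)--(ii), Lemma \ref{Control Max} gives the uniform control $\|\mathcal{M}_{\widetilde\nu}(f^n)\|_{L^\infty_q}\le C_{\mathcal M}\|f^n\|_{L^\infty_q}$ and, via the Lipschitz dependence of the Gaussian on its discrete macroscopic fields, controls $\mathcal{M}_{\widetilde\nu,j}(\bar f^n_i)-\mathcal{M}_{\widetilde\nu,j}(\widetilde f^n_i)$ by $C\|\widetilde e^n\|_{L^\infty_q}$.

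With these in hand the recursion closes. Because the reconstruction weights depend only on the spatial position and not on the velocity index, linear interpolation is a contraction in $\|\cdot\|_{L^\infty_q}$, so $\|\widetilde e^n\|_{L^\infty_q}\le\|e^n\|_{L^\infty_q}$ exactly. Combining this with the Lipschitz bound on the Gaussian difference and the convex-combination structure gives a one-step inequality of the form $\|e^{n+1}\|_{L^\infty_q}\le(1+C\triangle t)\|e^n\|_{L^\infty_q}+\|\mathcal E^n\|_{L^\infty_q}$, with $C$ proportional to $C_{\mathcal M}-1$, matching the exponential prefactor of the stated two-sided bound. A discrete Gronwall inequality over the $N_t=T^f/\triangle t$ steps then produces the factor $e^{CT^f}$ and converts the accumulated truncation errors in $\mathcal E^n$ into the four terms of the estimate: the per-step $O(\triangle t^2)$ implicit-Euler error sums to $\triangle t$; the velocity-quadrature error of the discrete moments contributes $\triangle v$; and the $O((\triangle x)^2)$ interpolation error of the exact $C^2$ solution, which is \emph{not} multiplied by $\triangle t$ in the recursion, accumulates to $N_t(\triangle x)^2=(\triangle x)^2/\triangle t$, while a refined summation-by-parts treatment of this term — the improvement advertised over \cite{RSY} — extracts the remaining non-accumulating $(\triangle x)^2$ contribution. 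Uniformity in $\nu$ follows since $|\widetilde\nu|\le|\nu|$ and all constants, in particular $C_{\mathcal M}$, are bounded on $(-1/2,1)$.

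The step I expect to be the main obstacle is the uniform-in-$n$ stability estimate, precisely the coupled induction establishing (i) the non-degeneracy of $\widetilde{\mathcal T}^n_{\widetilde\nu,i}$ and (ii) the moment-ratio lower bounds. The difficulty is circular: controlling the discrete Gaussian requires lower bounds on $\widetilde\rho^n_i$ and on the eigenvalues of $\widetilde{\mathcal T}^n_{\widetilde\nu,i}$, yet those bounds depend on $f^n$ staying pointwise positive and well localized, which is only guaranteed once the Gaussian is already under control. Breaking this circularity — propagating a two-sided pointwise bound on $f^n_{i,j}$ through the iteration while simultaneously maintaining positive-definiteness of $\widetilde{\mathcal T}^n_{\widetilde\nu,i}$ and the moment estimates, and pinning down the threshold $r_{\triangle v}$ below which the ratio conditions survive — is the technical heart of the proof.
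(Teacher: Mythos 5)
Your proposal is correct and follows essentially the same route as the paper: a consistency reformulation of the exact equation in the form of the scheme (\ref{main scheme}) with $O((\triangle t)^2)$ remainders (Section 6), a uniform-in-$n$ stability induction giving two-sided pointwise bounds on $f^n_{i,j}$, non-degeneracy of $\widetilde{\mathcal{T}}^n_{\widetilde{\nu},i}$ and the moment-ratio conditions that define $r_{\triangle v}$ (Theorem \ref{stability theorem}), a weighted $L^{\infty}_q$ continuity estimate for the discrete ellipsoidal Gaussian (Proposition \ref{ContinuityofMaxwellianDiscrete}), and a discrete Gronwall recursion in which the interpolation error $(\triangle x)^2$, entering without a $\triangle t$ factor, accumulates to $(\triangle x)^2/\triangle t$ (Section 8). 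The only deviations are organizational and harmless: you fold the interpolation and velocity-quadrature errors into the consistency term $\mathcal{E}^n$, whereas the paper keeps the exact shifted solution $\widetilde{f}(t_n)$ in its consistent form and absorbs those errors into Lemma \ref{Additional_Lemma1} and Proposition \ref{ContinuityofMaxwellianDiscrete}, and no summation-by-parts device is needed, since the non-accumulating $(\triangle x)^2$ term simply falls out of the $\triangle t$-weighted part of the local error.
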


\begin{remark} (1) $\nu=0$ corresponds to the original BGK model. Therefore, our result holds for the original BGK model too. (2) For the precise definition of $r_{\triangle v}$, see Theorem \ref{stability theorem}.
(3) When $\kappa=0$, this error estimate breaks down, since the coefficients of the estimate contain $\kappa^{-1}$.
Currently, it is not clear whether this is of inherent nature, or can be avoided by developing finer convergence analysis.
(4) The bad term $1/\triangle t$ is removed in \cite{CDM}. But the argument cannot be implemented in our case since it depends heavily on the fact that the distribution
function for the Vlaosv-Poisson equation remains compactly supported, once it is so initially.
(5) We believe the argument we develop in this work is robust, and  can be extended in many directions such as semi-Lagrangian scheme for polyatomic BGK models, high order semi-Lagrangian schemes, and semi-Lagrangian BDF methods, or Runge-Kutta method. We leave them for the future.
\end{remark}
%
%
\section{Technical lemmas}
\begin{lemma}
Discrete solutions to (\ref{main scheme}) are periodic in the spatial nodes:
\begin{align*}
f^n_{i+N_x,j}=f^n_{i,j}.
\end{align*}
\end{lemma}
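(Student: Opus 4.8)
The plan is to argue by induction on the time level $n$, exploiting the relation $N_x\triangle x=1$, which says that shifting the spatial index by $N_x$ corresponds to translating the spatial variable by exactly one period. For the base case $n=0$, the scheme sets $f^0_{i,j}=f_0(x_i,v_j)$, and since periodicity with period $1$ is imposed on $f_0$ over the whole line, one has $f^0_{i+N_x,j}=f_0(x_i+N_x\triangle x,v_j)=f_0(x_i+1,v_j)=f_0(x_i,v_j)=f^0_{i,j}$. The same computation applied to the directly sampled $\widetilde{f}^0_{i,j}=f_0(x_i-v_{j_1}\triangle t,v_j)$ shows that $\widetilde{f}^0$ is periodic as well, the shift $x_i\mapsto x_i-v_{j_1}\triangle t$ being a fixed $j$-dependent translation that commutes with the period-$1$ shift.

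For the inductive step, I would assume $f^n_{i+N_x,j}=f^n_{i,j}$ for all $i,j$ and read off from the scheme (\ref{main scheme}) that, since the two coefficients $\kappa/(\kappa+A_\nu\triangle t)$ and $A_\nu\triangle t/(\kappa+A_\nu\triangle t)$ are independent of $i$, it suffices to prove two facts: that the reconstruction $\widetilde{f}^n$ and the discrete Gaussian $\mathcal{M}_{\widetilde{\nu},j}(\widetilde{f}^n_i)$ are both periodic in $i$ with period $N_x$. The periodicity of the reconstruction is the heart of the matter. Using Definition \ref{def21}, the foot of the characteristic satisfies $x(i+N_x,j)=x_{i+N_x}-\triangle t\,v_{j_1}=x(i,j)+N_x\triangle x=x(i,j)+1$; consequently, if $x(i,j)\in[x_s,x_{s+1})$ then $x(i+N_x,j)\in[x_{s+N_x},x_{s+N_x+1})$, so the interpolation node shifts accordingly, $s(i+N_x,j)=s(i,j)+N_x$. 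Substituting into the linear reconstruction and noting $x(i+N_x,j)-x_{s+N_x}=x(i,j)-x_s$ and $x_{s+N_x+1}-x(i+N_x,j)=x_{s+1}-x(i,j)$, the interpolation weights are unchanged, and the inductive hypotheses $f^n_{s+N_x,j}=f^n_{s,j}$ and $f^n_{s+N_x+1,j}=f^n_{s+1,j}$ give $\widetilde{f}^n_{i+N_x,j}=\widetilde{f}^n_{i,j}$.

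Given the periodicity of $\widetilde{f}^n$, the periodicity of the discrete Gaussian follows immediately, since $\mathcal{M}_{\widetilde{\nu},j}(\widetilde{f}^n_i)$ depends on $\widetilde{f}^n_i$ only through the discrete moments $\widetilde{\rho}^n_i,\widetilde{U}^n_i,\widetilde{T}^n_i,\widetilde{\Theta}^n_i$ and hence $\widetilde{\mathcal{T}}^n_{\widetilde{\nu},i}$ defined in (\ref{Conservatives_Original}); each of these is a sum over $j$ of $\widetilde{f}^n_{i,j}$ against a weight depending only on $v_j$, so $\widetilde{f}^n_{i+N_x,j}=\widetilde{f}^n_{i,j}$ transfers directly to $\widetilde{\rho}^n_{i+N_x}=\widetilde{\rho}^n_i$, and likewise for the remaining fields, whence $\mathcal{M}_{\widetilde{\nu},j}(\widetilde{f}^n_{i+N_x})=\mathcal{M}_{\widetilde{\nu},j}(\widetilde{f}^n_i)$. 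Feeding both facts into (\ref{main scheme}) yields $f^{n+1}_{i+N_x,j}=f^{n+1}_{i,j}$ and closes the induction. The only genuinely delicate point is the bookkeeping for the reconstruction: one must verify that translating the departure point by a full period translates into an exact shift of the stencil indices by $N_x$ with identical interpolation weights, which is precisely where the assumption $N_x\triangle x=1$ and the choice to define the spatial nodes on the whole line (rather than on the unit interval) are used.
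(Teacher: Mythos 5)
Your proof is correct and follows essentially the same route as the paper: induction in $n$, the base case exploiting periodicity of $f_0$ and of the directly sampled $\widetilde f^0$, the key stencil-shift identity $s(i+N_x,j)=s(i,j)+N_x$ with unchanged interpolation weights, and the observation that the discrete Gaussian inherits periodicity through the moments. The only difference is bookkeeping (you carry just $f^n$ in the induction hypothesis and rederive $\widetilde f^n$ and the fields inside the step, while the paper carries them all), which is immaterial.
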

\begin{proof}
We use induction. We recall the definition of $f^0_{i,j}$ to get
\begin{align*}
f^0_{i+N_x,j}=f_0(x_{i+{N_x}},v_j)=f_0(x_i+N_x\triangle x,v_j)=f_0(x_i+1,v_j)=f_{0}(x_i,v_j)=f^0_{i,j}.
\end{align*}
Similarly, we have
\begin{align*}
\tilde{f}^0_{i+N_x,j}&=\widetilde{f}_0(x_i+N_x\triangle x,v_j)=\widetilde{f}_0(x_i+1,v_j)\cr
&=f_0(x_i-\triangle t v_j+1,v_j)=f_0(x_i-\triangle t v_j,v_j)\cr
&=\widetilde{f}_0(x_i,v_j)\cr
&=\tilde{f}^0_{i,j}.
\end{align*}
Then, the periodicity of $f^0_{i,j}$ and $\tilde{f}^0_{i,j}$ implies the periodicity of
$\rho^0_{i}, U^0_{i}, \mathcal{T}^0_{\widetilde{\nu},i}$ and $\widetilde{\rho}^0_{i}, \widetilde{U}^0_{i}, \widetilde{\mathcal{T}}^0_{\tilde{\nu},i}$ by definition. This completes the proof of the initial step of the induction.
Now, assume that $f^n_{i,j}$, $\widetilde{f}^n_{i,j}$,
$\rho^n_{i}, U^n_{i}, \mathcal{T}^n_{\widetilde{\nu},i}$ and $\widetilde{\rho}^n_{i}, \widetilde{U}^n_{i}, \widetilde{\mathcal{T}}^n_{\tilde{\nu},i}$ are all periodic in spatial variable. Then the periodicity of $f^{n+1}_{i,j}$ is immediate from (\ref{main scheme}).
For the periodicity of $\widetilde{f}^{n+1}_{i,j}$, we first observe
\begin{align*}
x(i+N_x,j)=x_{i+N_x}-\triangle t v_{j_1}=x_i+N_x\triangle x-\triangle tv_{j_1}=x(i,j)+N_x\triangle x,
\end{align*}
so that
\begin{align*}
s(i+N_x,j)=s(i,j)+N_x.
\end{align*}
Therefore,
\begin{align*}
x(i+N_x,j)-x_{s(i+{N_x},j)}=x(i,j)-x_{s(i,j)}.
\end{align*}
Likewise,
\begin{align*}
x_{s(i+N_x,j)+1}-x(i+N_x,j)=x_{s(i,j)+1}-x(i,j).
\end{align*}
We then use these identities together with the periodicity of $f^{n+1}_{i,j}$ to derive
\begin{align*}
\widetilde f^{n+1}_{i+N_x,j}&=\frac{x(i+N_x,j)-x_{s(i+N_x,j)}}{\triangle x}~f^{n+1}_{s(i+N_x,j)+1,j}+\frac{x_{s(i+N_x,j)+1}-x(i+N_x,j)}{\triangle x}~f^{n+1}_{s(i+N_x,j),j}\cr
&=\frac{x(i,j)-x_{s(i,j)}}{\triangle x}~f^{n+1}_{s(i,j)+N_x+1,j}+\frac{x_{s(i,j)+1}-x(i,j)}{\triangle x}~f^{n+1}_{s(i,j)+N_x,j}\cr
&=\frac{x(i,j)-x_{s(i,j)}}{\triangle x}~f^{n+1}_{s(i,j)+1,j}+\frac{x_{s(i,j)+1}-x(i,j)}{\triangle x}~f^{n+1}_{s(i,j),j}\cr
&=\widetilde f^{n+1}_{i,j},
\end{align*}
which gives the periodicity of $\widetilde f^{n+1}_{i+N_x,j}$. Then the macroscopic fields associated with
$f^{n+1}_{i,j}$ and $\widetilde f^{n+1}_{i,j}$ are periodic by construction. Therefore, the desired result follows from
induction.
\end{proof}
\begin{lemma} \label{f tilde{f}}  \emph{\cite{Yun3}} The reconstruction procedure does not increase the $\|\cdot\|_{L^{\infty}_q}$-norm of the discrete distribution function:
\[
\|\widetilde  f^n\|_{L^{\infty}_q}\leq \|f^n\|_{L^{\infty}_q}.
\]
\end{lemma}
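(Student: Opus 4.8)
The plan is to exploit the fact that the reconstruction in Definition \ref{def21} is a convex combination of two nodal values sharing the \emph{same} velocity index $j$, so that the velocity weight $(1+|v_j|)^q$ is common to both terms and factors cleanly through the inequality. First I would record the two structural facts about the interpolation coefficients. Writing
\[
a=\frac{x(i,j)-x_s}{\triangle x},\qquad b=\frac{x_{s+1}-x(i,j)}{\triangle x},
\]
the defining property $x(i,j)\in[x_s,x_{s+1})$ gives $0\leq a<1$ and $0<b\leq 1$, while $x_{s+1}-x_s=\triangle x$ forces $a+b=1$. Hence $\widetilde f^n_{i,j}=a\,f^n_{s+1,j}+b\,f^n_{s,j}$ is a genuine convex combination with nonnegative weights.

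The crucial observation, which I would emphasize, is that the reconstruction mixes only the \emph{spatial} nodes $s=s(i,j)$ and $s+1$, leaving the velocity index $j$ untouched. Consequently, multiplying the reconstruction identity by $(1+|v_j|)^q$ keeps the very same weight attached to each contribution:
\[
\widetilde f^n_{i,j}(1+|v_j|)^q
= a\,f^n_{s+1,j}(1+|v_j|)^q + b\,f^n_{s,j}(1+|v_j|)^q.
\]
Applying the triangle inequality and bounding each of the two weighted nodal terms by the supremum norm (legitimate since both carry the matching weight $(1+|v_j|)^q$ and the same index $j$), one obtains
\[
\big|\widetilde f^n_{i,j}(1+|v_j|)^q\big|
\leq (a+b)\,\|f^n\|_{L^{\infty}_q}
= \|f^n\|_{L^{\infty}_q}.
\]
Taking the supremum over all $i$ and $j$ on the left-hand side then yields the claim.

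I do not anticipate a genuine obstacle here; the content of the lemma is entirely structural. The only point requiring care is to make explicit that the interpolation is purely spatial, so that the common factor $(1+|v_j|)^q$ can be pulled through the convex combination without any mismatch between the weights of different terms. Note in particular that no sign or nonnegativity assumption on $f^n$ is needed, since the estimate is carried out with absolute values throughout; convexity of the interpolation weights together with the frozen velocity index is all that the argument uses.
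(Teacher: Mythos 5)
Your convex-combination argument is correct and, for the reconstruction case, is essentially the paper's own proof: where the paper bounds the interpolant by $\max\{f^n_{s,j},f^n_{s+1,j}\}$, you use the triangle inequality together with $a+b=1$, which is the same estimate (and in fact slightly cleaner, since it does not implicitly rely on the nonnegativity of $f^n$, as you point out).

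There is, however, one case your proof does not cover and the paper's does: $n=0$. The lemma is stated, and later invoked (for instance to get $(A^0)$ in Lemma \ref{strict positiveness}), for all $n\geq 0$, but in the scheme of Section 2.2 the tilde quantity at the initial step is \emph{not} the linear reconstruction of Definition \ref{def21}: it is defined by direct sampling of the continuous initial datum, $\widetilde f^0_{i,j}=f_0(x_i-v_j\triangle t,\,v_j)$, and the foot of the characteristic $x_i-v_j\triangle t$ is in general not a grid node, so there are no interpolation weights to speak of. The convexity argument therefore simply does not apply at $n=0$; a separate one-line estimate is needed, namely bounding the weighted supremum of the shifted samples by the supremum of $f_0$ in the spatial variable over the whole line (this is how the paper closes the case, and strictly speaking the natural upper bound there is the continuous norm $\|f_0\|_{L^{\infty}_q}$, which is also what is used downstream). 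This is a small omission, but a genuine one: as written, your argument proves the lemma only for $n\geq 1$.
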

\begin{proof}
We observe from the definition of $\widetilde f^n$ that, for $n\neq0$
\begin{align*}
\|\widetilde f^n\|_{L^{\infty}_q}
&=\sup_{i,j}\big|\widetilde f^n_{i,j}(1+|v_j|)^q\big|\\
&=\sup_{i,j}\Big|\Big(\frac{x(i,j)-x_{s,j}}{\triangle x}~f^{n}_{s+1,j}
+\frac{x_{s+1,j}-x(i,j)}{\triangle x}~f^{n}_{s,j}\Big)(1+|v_j|)^q\Big|~\\
&\leq\sup_{i,j}\big|\max\{f^{n}_{s,j},f^{n}_{s+1,j}\}(1+|v_j|)^q\big|\\
&\leq\sup_{i,j}\big|f^n_{i,j}(1+|v_j|)^q\big|\\
&=\|f^n\|_{L^{\infty}_q}.
\end{align*}
Here, $s$ denotes $s(i,j)$.
When $n=0$, we have
\begin{align*}
\|\widetilde f^0\|_{L^{\infty}_q}
=\sup_{i,j}\big|f_0(x_i-v_j\triangle t,v_j)(1+|v_j|)^q\big|
\leq\sup_{i,j}\big| f_0(x_i,v_j)(1+|v_j|)^q\big|
=\|f^0\|_{L^{\infty}_q}.
\end{align*}
\end{proof}
\begin{lemma}\label{equivalence}Let $-1/2<\nu<1$. Assume $\widetilde{f}^n_{i,j}>0$ and $\widetilde{\rho}^n_{i}>0$. Then the discrete temperature tensor $\mathcal{T}^n_{\tilde{\nu},i}$ and its determinant $\det\mathcal{T}^n_{\tilde{\nu},i}$ satisfy
the following equivalence estimates:
\begin{align*}
&(1)~\min\{1-\tilde{\nu},1+2\tilde{\nu}\}T^n_iId\leq\widetilde{\mathcal{T}}^n_{\tilde{\nu},i}\leq \max\{1-\tilde{\nu},1+2\tilde{\nu}\}T^n_iId,\cr
&(2)~\min\{1-\tilde{\nu},1+2\tilde{\nu}\}^3(T^n_i)^3\leq \det\{\widetilde{\mathcal{T}}^{n}_{\tilde{\nu},i}\}\leq \max\{1-\tilde{\nu},1+2\tilde{\nu}\}^3(T^n_i)^3.
\end{align*}
In the first inequality, $A\geq B$ for $3\times 3$ symmetric matrices $A$ and $B$ means $A-B$ is positive definite.
\end{lemma}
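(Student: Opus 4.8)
The plan is to diagonalize the stress tensor $\widetilde{\Theta}^n_i$ and reduce both inequalities to sharp scalar bounds on its eigenvalues. (Throughout, $\widetilde{T}^n_i$ denotes the discrete temperature of $\widetilde f^n_i$, that is, the quantity written $T^n_i$ in the statement.) Two structural facts drive the argument. First, $\widetilde{\Theta}^n_i$ is symmetric and positive definite: by its defining identity it is a sum, with strictly positive coefficients $\widetilde f^{n}_{i,j}(\triangle v)^3/\widetilde{\rho}^n_i$, of the rank-one positive semidefinite matrices $(v_j-\widetilde{U}^n_i)\otimes(v_j-\widetilde{U}^n_i)$, so for $k\neq 0$ the quadratic form $k^{\top}\widetilde{\Theta}^n_i k=(\widetilde{\rho}^n_i)^{-1}\sum_j \widetilde f^{n}_{i,j}\big(k\cdot(v_j-\widetilde{U}^n_i)\big)^2(\triangle v)^3$ can vanish only if $k\cdot(v_j-\widetilde{U}^n_i)=0$ for every $j$, which is impossible since the velocity nodes $\{v_j\}$ are not contained in any plane. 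Second, taking the trace of the defining identity of $\widetilde{\Theta}^n_i$ and comparing with that of $\widetilde{T}^n_i$ yields the constraint $\mathrm{tr}\,\widetilde{\Theta}^n_i=3\widetilde{T}^n_i$.

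Next I would write $\widetilde{\Theta}^n_i=Q\,\mathrm{diag}(\lambda_1,\lambda_2,\lambda_3)\,Q^{\top}$ with $Q$ orthogonal and eigenvalues $\lambda_l>0$. The trace constraint gives $\lambda_1+\lambda_2+\lambda_3=3\widetilde{T}^n_i$, so each eigenvalue obeys the sharp two-sided bound $0<\lambda_l<3\widetilde{T}^n_i$. Since $\widetilde{\mathcal{T}}^n_{\tilde{\nu},i}=(1-\tilde{\nu})\widetilde{T}^n_i Id+\tilde{\nu}\widetilde{\Theta}^n_i$ is diagonalized by the same $Q$, its eigenvalues are $\mu_l=(1-\tilde{\nu})\widetilde{T}^n_i+\tilde{\nu}\lambda_l$. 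The affine map $\lambda\mapsto(1-\tilde{\nu})\widetilde{T}^n_i+\tilde{\nu}\lambda$ is monotone, increasing for $\tilde{\nu}\geq 0$ and decreasing for $\tilde{\nu}<0$; evaluating at the endpoints $\lambda=0$ and $\lambda=3\widetilde{T}^n_i$ therefore gives, in either sign regime, $\min\{1-\tilde{\nu},1+2\tilde{\nu}\}\,\widetilde{T}^n_i<\mu_l<\max\{1-\tilde{\nu},1+2\tilde{\nu}\}\,\widetilde{T}^n_i$. Here I would also record that $\tilde{\nu}=\kappa\nu/(\kappa+\triangle t)$ has the same sign as $\nu$ with $|\tilde{\nu}|\leq|\nu|$, so $-1/2<\tilde{\nu}<1$ and both $1-\tilde{\nu}$ and $1+2\tilde{\nu}$ are strictly positive.

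It then remains to translate the eigenvalue bounds into the stated form. For $(1)$, since $Id$ and $\widetilde{\mathcal{T}}^n_{\tilde{\nu},i}$ are simultaneously diagonalized by $Q$, for every $k$ one has $k^{\top}\widetilde{\mathcal{T}}^n_{\tilde{\nu},i}k=\sum_l \mu_l\,(Q^{\top}k)_l^2$, which the eigenvalue bounds squeeze strictly between $\min\{1-\tilde{\nu},1+2\tilde{\nu}\}\widetilde{T}^n_i|k|^2$ and $\max\{1-\tilde{\nu},1+2\tilde{\nu}\}\widetilde{T}^n_i|k|^2$; this is exactly the Loewner-order assertion, and the strictness delivers the positive-definite interpretation requested in the statement. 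For $(2)$, the determinant equals the product $\mu_1\mu_2\mu_3$ of three positive factors, so multiplying the three two-sided bounds immediately produces the cubed inequality.

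The computation is elementary once the trace identity $\mathrm{tr}\,\widetilde{\Theta}^n_i=3\widetilde{T}^n_i$ and the positivity of the $\lambda_l$ are in hand, so I do not expect a genuine obstacle. The one point needing care is the case distinction on the sign of $\tilde{\nu}$ (equivalently of $\nu$), since this is precisely what swaps the roles of $1-\tilde{\nu}$ and $1+2\tilde{\nu}$ as the minimum and maximum, together with checking $-1/2<\tilde{\nu}<1$ so that these coefficients remain positive and the matrix inequalities hold in the positive-definite rather than merely semidefinite sense.
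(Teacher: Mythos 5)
Your proof is correct, but it takes a genuinely different route from the paper's. The paper never diagonalizes the stress tensor: it writes the quadratic form $k^{\top}\{\widetilde{\rho}^n_i\widetilde{\mathcal{T}}^n_{\tilde{\nu},i}\}k$ as the sum over $j$ of an isotropic part and the anisotropic part $\tilde{\nu}\sum_j \tilde{f}^n_{i,j}\{(v_j-\widetilde{U}^n_i)\cdot k\}^2(\triangle v)^3$, then splits into the cases $0<\tilde{\nu}<1$ (discard the nonnegative anisotropic term to get the lower bound) and $-1/2<\tilde{\nu}<0$ (apply Cauchy--Schwarz $\{(v_j-\widetilde{U}^n_i)\cdot k\}^2\le|v_j-\widetilde{U}^n_i|^2|k|^2$ termwise); eigenvalues enter only at the end, to convert the quadratic-form bounds of part (1) into the determinant bounds of part (2). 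You instead diagonalize $\widetilde{\Theta}^n_i$ at the outset and replace Cauchy--Schwarz by the trace identity $\mathrm{tr}\,\widetilde{\Theta}^n_i=3\widetilde{T}^n_i$, which together with positive definiteness pins every eigenvalue of $\widetilde{\Theta}^n_i$ in $(0,3\widetilde{T}^n_i)$; both parts of the lemma are then read off from the eigenvalues $\mu_l=(1-\tilde{\nu})\widetilde{T}^n_i+\tilde{\nu}\lambda_l$ of $\widetilde{\mathcal{T}}^n_{\tilde{\nu},i}$, with the same sign-of-$\tilde{\nu}$ case split appearing only in the monotonicity of the affine map. What your route buys: it unifies (1) and (2) under a single eigenvalue computation, and it delivers strict inequalities --- which the paper's statement formally requires, since it defines $A\ge B$ as $A-B$ positive definite --- because you verify that $\widetilde{\Theta}^n_i$ is positive definite (the grid $\{v_j\}$ lies in no plane), whereas the paper's estimates as written only yield the nonstrict Loewner bounds. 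What the paper's route buys: it is self-contained at the quadratic-form level and needs no separate positive-definiteness discussion until the determinant step. One caveat shared by both arguments: when $\tilde{\nu}=0$ (i.e.\ $\nu=0$) the bounds degenerate to the equality $\widetilde{\mathcal{T}}^n_{0,i}=\widetilde{T}^n_iId$, so the strict positive-definite reading of the ordering fails in that edge case for either proof.
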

\begin{proof}
From (\ref{tildeT}), we see that
\begin{align*}
\widetilde{\rho}^n_i\widetilde{\mathcal{T}}^n_{\tilde{\nu},i}&=(1-\tilde{\nu})\widetilde{\rho}^n_i\widetilde{T}^n_iId+\tilde{\nu}\widetilde{\rho}^n_i\widetilde{\Theta}^n_i\cr
&=\frac{(1-\tilde{\nu})}{3}\Big\{\sum_j \tilde{f}^n_{i,j}|v_j-\widetilde{U}^n_i|^2(\triangle v)^3\Big\}+\tilde{\nu}\sum_j \tilde{f}^n_{i,j}
(v_j-\widetilde{U}^n_i)\otimes(v_j-\widetilde{U}^n_i)(\triangle v)^3.
\end{align*}
Then, in view of the identity: $k^T \left\{U\otimes U\right\}k=(k\cdot U)^2~~(k, U\in \mathbb{R}^3)$, we have
\begin{align*}
k^{\top}\{\widetilde{\rho}_i^n\widetilde{\mathcal{T}}^n_{\tilde{\nu},i}\}k=\frac{(1-\tilde{\nu})}{3}\Big\{\sum_j \tilde{f}^n_{i,j}|v_j-\widetilde{U}^n_i|^2(\triangle v)^3\Big\}|k|^2+\tilde{\nu}\sum_j \tilde{f}^n_{i,j}\left\{(v_j-\widetilde{U}^n_i)\cdot k\right\}^2(\triangle v)^3.
\end{align*}
We note from the definition of $\widetilde{\nu}$ that $-1/2<\nu<1$ implies $-1/2<\widetilde{\nu}<1$, and divide our estimate into the following two cases: \newline
\noindent(a) $0<\tilde{\nu}<1$: Since the second term is non-negative, we have
\begin{align*}
k^{\top}\{\widetilde{\rho}^n_i\widetilde{\mathcal{T}}_{\tilde{\nu},i}^n\}k\geq\frac{(1-\tilde{\nu})}{3}\Big\{\sum_j \tilde{f}^n_{i,j}|v_j-\widetilde{U}^n_i|^2(\triangle v)^3\Big\}|k|^2
=(1-\tilde{\nu})\widetilde{\rho}^n_i \widetilde{T}^n_i|k|^2.
\end{align*}
(b) $-\frac{1}{2}<\tilde{\nu}<0$: By Cauchy-Schwartz inequality, we see that
\begin{align*}
k^{\top}\{\widetilde{\rho}^n_i\widetilde{\mathcal{T}}^n_{\tilde{\nu},i}\}k&\geq\frac{(1-\tilde{\nu})}{3}
\Big\{\sum_j \tilde{f}^n_{i,j}|v_j-\widetilde{U}^n_i|^2(\triangle v)^3\Big\}|k|^2+\tilde{\nu}\Big\{\sum_j\tilde{f}^n_{i,j}|v_j-\widetilde{U}^n_i|^2(\triangle v)^3\Big\}|k|^2\cr
&=\frac{(1+2\tilde{\nu})}{3}\Big\{\sum_j\tilde{f}^n_{i,j}|v_j-\widetilde{U}^n_i|^2(\triangle v)^3\Big\}|k|^2\cr
&=(1+2\tilde{\nu})\widetilde{\rho}^n_i \widetilde{T}^n_i |k|^2.
\end{align*}
We then combine the above estimates to get
\begin{align*}
\min\{1-\tilde{\nu},1+2\tilde{\nu}\} \widetilde{T}^n_i\leq k^{\top}\{\widetilde{\mathcal{T}}^n_{\tilde{\nu},i}\}k.
\end{align*}
The r.h.s of the inequality follows in a similar manner. \newline

(2) Let $\lambda_i$ be the eigenvalues of $\widetilde{\mathcal{T}}_{\tilde{\nu}}$. Then (1) implies that the values of
these eigenvalues lie between $\min\{1-\tilde{\nu},1+2\tilde{\nu}\}\widetilde{T}^n_i$ and $\max\{1-\tilde{\nu},1+2\tilde{\nu}\}\widetilde{T}^n_i$.
This gives
\begin{align*}
\min\{\lambda^n_1,\lambda^n_2,\lambda^n_3\}=\min_{|k|=1}k^{\top}\widetilde{\mathcal{T}}^n_{\tilde{\nu},i}k\geq \min\{1-\tilde{\nu},1+2\tilde{\nu}\} \widetilde{T}^n_i,
\end{align*}
and
\begin{align*}
\max\{\lambda^n_1,\lambda^n_2,\lambda^n_3\}=\max_{|k|=1}k^{\top}\widetilde{\mathcal{T}}^n_{\tilde{\nu},i}k\leq \max\{1-\tilde{\nu},1+2\tilde{\nu}\} \widetilde{T}^n_i,
\end{align*}
so that
\begin{align*}
\min\{1-\nu,1+2\nu\}^3\{\widetilde{T}^n_i\}^3\leq\det\widetilde{\mathcal{T}}^n_{\widetilde{\nu},i}=\lambda^n_1\lambda^n_2\lambda^n_3\leq \max\{1-\nu,1+2\nu\}^3\{\widetilde{T}^n_i\}^3.
\end{align*}
\end{proof}

In the following lemma, the symbol $\left[x\right]$ denotes, as usual, the largest integer that does not exceed $x$.
%
%
\begin{lemma}\label{a}
Fix velocity grid index $j_1$ and the grid size $\triangle x$, $\triangle v$, $\triangle t$. We define $s^k$
inductively as
\begin{align*}
s^{(1)}=s(i,j_1), ~s^{(2)}=s(s^{(1)},j_1), ~s^{(3)}=s(s^{(2)},j_1),\cdots
\end{align*}
Using this notation, we define $a^{s^{(n)}}_{j_1}$ by
\[
a^{s^{(n)}}_{j_1}=\frac{x_{s^{(n)}}-\triangle tv_{j_1}-x_{s^{(n+1)}}}{\triangle x}.
\]
Then, $a^{s^{(n)}}_{j_1}$ is constant for all $n>0$, that is
\[
a^{s^{(n)}}_{j_1}=a^{s^{(m)}}_{j_1},
\]
for all positive integers $m$ and $n$.
\end{lemma}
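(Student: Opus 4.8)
The plan is to exploit the uniformity of the spatial grid, $x_i=i\,\triangle x$, which reduces the map $i\mapsto s(i,j_1)$ to a single floor operation shifted by a fixed amount. Set $\beta=\triangle t\,v_{j_1}/\triangle x$, so that $x_{s^{(n)}}-\triangle t\,v_{j_1}=(s^{(n)}-\beta)\triangle x$. By the definition of $s(\cdot,j_1)$ in Definition \ref{def21}, the index $s^{(n+1)}$ is the unique integer for which $(s^{(n)}-\beta)\triangle x$ lies in $[x_{s^{(n+1)}},x_{s^{(n+1)}+1})$; equivalently, dividing through by $\triangle x$,
\[
s^{(n+1)}=\big[\,s^{(n)}-\beta\,\big].
\]

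The key step is the observation that, since $s^{(n)}$ is an integer, the floor splits off the integer part: $[\,s^{(n)}-\beta\,]=s^{(n)}+[-\beta]$. Consequently the increment
\[
s^{(n+1)}-s^{(n)}=[-\beta]
\]
depends only on $j_1$ (through $\beta$) and is independent of $n$; geometrically, every backward step along the characteristic shifts the base index by the same integer. Substituting $x_{s^{(n)}}=s^{(n)}\triangle x$ and $x_{s^{(n+1)}}=s^{(n+1)}\triangle x$ into the definition of $a^{s^{(n)}}_{j_1}$ then yields
\[
a^{s^{(n)}}_{j_1}=\frac{(s^{(n)}-\beta)\triangle x-s^{(n+1)}\triangle x}{\triangle x}=s^{(n)}-\beta-s^{(n+1)}=-\beta-[-\beta],
\]
which is the fractional part of $-\beta$ and hence manifestly independent of $n$. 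Comparing this expression at indices $n$ and $m$ gives $a^{s^{(n)}}_{j_1}=a^{s^{(m)}}_{j_1}$, as claimed.

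There is no genuine analytic obstacle here: the only point requiring care is the elementary integer arithmetic of the floor, namely that subtracting an integer commutes with $[\,\cdot\,]$. The lemma is essentially a bookkeeping identity recording the translation invariance of the uniform mesh under the constant backward shift $\triangle t\,v_{j_1}$, and this invariance is exactly what makes $a^{s^{(n)}}_{j_1}$ a mesh quantity depending on $j_1$ alone rather than on the iterate $n$.
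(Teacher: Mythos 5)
Your proof is correct and follows essentially the same route as the paper: both arguments reduce the claim to floor-function bookkeeping on the uniform mesh, the paper via the integer $m_{j_1}=\left[v_{j_1}\triangle t/\triangle x\right]$ and the sandwich $\{\ell_{s^{(n)}}-(m_{j_1}+1)\}\triangle x\leq x_{s^{(n)}}-v_{j_1}\triangle t\leq\{\ell_{s^{(n)}}-m_{j_1}\}\triangle x$, you via the identity $\left[s^{(n)}-\beta\right]=s^{(n)}+\left[-\beta\right]$. If anything, your closed form $a^{s^{(n)}}_{j_1}=-\beta-\left[-\beta\right]$ treats the degenerate case where $v_{j_1}\triangle t$ is an exact multiple of $\triangle x$ consistently with the half-open convention $[x_s,x_{s+1})$ of Definition \ref{def21} (giving $a=0$ there, where the paper's expression $(m_{j_1}+1)-v_{j_1}\triangle t/\triangle x$ would give $1$); in either convention the value is independent of $n$, so the conclusion of the lemma is unaffected.
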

\begin{proof}
We take a positive integer $\ell_{s^{(n)}}$ such that
\begin{align*}
x_{s^{(n)}}=\ell_{s^{(n)}}\triangle x.
\end{align*}
On the other hand, we can find a positive integer $m_j$ such that
\begin{align*}
m_{j_1}\triangle x\leq v_{j_1}\triangle t\leq (m_{j_1}+1)\triangle x.
\end{align*}
That is,
\begin{equation}\label{Gauss}
m_{j_1}=\left[\frac{v_{j_1}\triangle t}{\triangle x}\right].
\end{equation}
From these, we immediately see that
 \begin{align*}
\left\{\ell_{s^{(n)}}-(m_{j_1}+1)\right\}\triangle x\leq x_{s^{(n)}}-v_{j_1}\triangle t\leq \left\{\ell_{s^{(n)}}-m_{j_1}\right\}\triangle x.
\end{align*}
Since $x_{s^{(n+1)}}$ denotes the closest spatial node that lies before $x_{s^{(n)}}-v_{j_1}\triangle t$, this gives
\[
x_{s^{(n+1)}}=\{\ell_{s^{(n)}}-(m_{j_1}+1)\}\triangle x.
\]
Therefore,
\begin{align}
\{x_{s^{(n)}}-\triangle tv_{j_1}\}-x_{s^{(n+1)}}&=\{\ell_{s^{(n)}}\triangle x-\triangle t v_{j_1}\}-\{\ell_{s^{(n)}}-(m_{j_1}+1)\}\triangle x\cr
&=(m_{j_1}+1)\triangle x-\triangle t v_{j_1}.
\end{align}
Dividing both sides by $\triangle x$
\begin{align}
a^{s^{(n)}}_{j_1}=\frac{x_{s^{(n)}}-\triangle tv_{j_1}-x_{s^{(n+1)}}}{\triangle x}
=(m_{j_1}+1)-\frac{v_{j_1}\triangle t}{\triangle x}.
\end{align}
In view of (\ref{Gauss}), this can be rewritten as
\[
\left[ \frac{v_{j_1}\triangle t}{\triangle x}\right]-\frac{v_{j_1}\triangle t}{\triangle x}+1,
\]
which is dependent on $j$ but not on $n$. This completes the proof.
\end{proof}
%
%
%
%
\section{Stability of the discrete distribution function }
In this section, we derive uniform lower and upper bounds for the discrete distribution function $\tilde{f}^n_{i,j}$ and corresponding macroscopic fields.
We start with series of definitions most of which were introduced for technical reasons.
\begin{definition}
(1) We define $C_{\alpha}$, $C_{q,\alpha}$ and $C_{q-m}\,(q\geq m)$ by
\begin{align*}
C_{\alpha}=\int_{\mathbb{R}^3}e^{-C_{0,2}|v|^{\alpha}}dv,\quad
C_{q,\alpha}=\sup_{v}\big\{(1+|v|)^qe^{-C_{0,2}|v|^{\alpha}}\big\},\quad
C_{q-m}=\int_{\mathbb{R}^3}\frac{1}{(1+|v|)^{q-m}}dv.
\end{align*}
(2) Throughout this section, we fix $C_{M}$, $C_{\mathcal{M}}$ as is defined in Lemma \ref{moments estimate discrete} and Lemma \ref{Control Max}
respectively.
\end{definition}
\begin{definition}\label{End}
(1) We say that $f^n_{i,j}$ satisfies $E^n_1$ if  the following two statements hold:
\begin{align*}
&(A^n)~ \|\widetilde{f}^n_{i,j}\|_{L^{\infty}_q}
\leq\Big(\frac{\kappa+C_{\mathcal{M}}A_{\nu}\triangle t }{\kappa+A_{\nu}\triangle t}\Big)^{n}\|f^{0}\|_{L^{\infty}_q}
\leq e^{\frac{(C_{\mathcal{M}}-1) A_{\nu}T^f}{\kappa+A_{\nu}\triangle t}}\|f^0\|_{L^{\infty}_q},\cr
&(B^n)~\widetilde{f}^n_{i,j}\geq C_{0,1}\left(\frac{\kappa}{\kappa+A_{\nu}\triangle t}\right)^ne^{-C_{0,2}|v_j|^{\alpha}}
\geq C_{0,1}e^{-\frac{A_{\nu}}{\kappa}T^f}e^{-C_{0,2}|v_j|^{\alpha}}.
\end{align*}
(2) We say that $f^n_{i,j}$ satisfies $E^n_2$ if the following two statements hold:
\begin{align*}
&(C^n)~\widetilde{\rho}^n_i\geq \frac{1}{2}C_{0,1}C_{\alpha}e^{-\frac{A_{\nu}}{\kappa}T^f},\quad
\widetilde{T}^n_i\geq \left(\frac{C_{0,1}C_{\alpha}}{2C_M\|f_0\|_{L^{\infty}_q}}\right)^{2/3}e^{-\frac{2}{3}\left(\frac{1}{\kappa}+\frac{C_{\mathcal{M}}-1 }{\kappa+A_{\nu}\triangle t}\right)A_{\nu}T^f},\cr
&(D^n)~\|\widetilde{\rho}^n\|_{L^{\infty}_x},\|\widetilde{U}^n\|_{L^{\infty}_x},\|\widetilde{T}^n\|_{L^{\infty}_x}\leq 2C_{q}\Big\{1+\big(C_{0,1}C_{\alpha}\big)^{-1}\Big\}e^{\left(\frac{1}{\kappa}+\frac{(C_{\mathcal{M}}-1)}{\kappa+A_{\nu}\triangle t}\right)A_{\nu}T^f}\|f^0\|_{L^{\infty}_q}.
\end{align*}
(3) We define $E^n=E^n_1\wedge E^n_2$.
\end{definition}
\begin{remark}
In fact, the first inequaly in $(A^n)$  implies the second inequality due to the elementary inequality
$(1+x)^n\leq e^{nx}$. We stated them in this seemingly redundant manner since both estimates are interchangeably used in the following proofs. $(B^n)$ is stated in such a redundant manner for the same reason.
\end{remark}

\begin{definition}
(1) We define constants $a_1$, $a_2$ and $a_3$ by
\begin{align*}
a_1&=\left(\frac{1}{2}\right)^{\frac{14}{15}}\left(\frac{3}{\pi}\right)^{\frac{1}{5}}\frac{(C_{0,1}C_{\alpha})^{\frac{1}{3}}}
{C_M^{\frac{2}{15}}\|f_0\|^{\frac{1}{3}}_{L^{\infty}_q}}
e^{-\frac{1}{3}\left(\frac{1}{\kappa}+\frac{(C_{\mathcal{M}}-1) }{\kappa+A_{\nu}\triangle t}\right)A_{\nu}T^f}
,\cr
a_2&=\displaystyle\left(\frac{\pi}{4(q-5)}\right)^{\frac{1}{q-3}}\frac{ (C_{0,1}C_{q,\alpha})^{\frac{1}{q-3}}}
{C_{q}^{\frac{1}{q-3}}\Big\{1+\big(C_{0,1}C_{\alpha}\big)^{-1}\Big\}^{\frac{1}{q-3}}
\|f_0\|^{\frac{1}{q-3}}_{L^{\infty}_q}}e^{-\frac{1}{q-3}\left(\frac{2}{\kappa}+\frac{(C_{\mathcal{M}-1})}{\kappa+A_{\nu}\triangle t}\right)A_{\nu}T^f},\cr
a_3&=\left(\frac{1}{2}\right)^{\frac{2(q+6)}{3(q+3)}}\left(\frac{3^q}{\pi}\right)^{\frac{1}{q+3}}\!\!\!\!\!\!\!\!
\frac{\left(C_{0,1}C_{\alpha}\right)^{\frac{2q+3}{3(q+3)}}}{\big\{C_M\big\}^{\frac{2q}{3(q+3)}}\|f_0\|_{L^{\infty}_q}^{\frac{2q-3}{3(q+3)}}}
e^{-\frac{2q+3}{3(q+3)}\left(\frac{1}{\kappa}+\frac{(C_{\mathcal{M}}-1) }{\kappa+A_{\nu}\triangle t}\right)A_{\nu}T^f}.
\end{align*}
\end{definition}
%
%
%
%
%
The main goal of this section is the following.
\begin{theorem}\label{stability theorem}
Choose $\ell>0$ sufficiently small such that  $\triangle v<\ell$ implies
\begin{align*}
&\hspace{0.3cm}\frac{1}{2}C_{\alpha}\leq\sum_je^{-C_{0,2}|v_j|^{\alpha}}(\triangle v)^3\leq2C_{\alpha},\cr
&\frac{1}{2}C_{q,\alpha}\leq\sup_j\big\{(1+|v_j|)^qe^{-|v_j|C^{\alpha}}\big\}\leq2C_{q,\alpha},\cr
&\hspace{0.1cm}\frac{1}{2}C_{q-m}\leq\sum_j\frac{(\triangle v)^3}{(1+|v_j|)^{q-m}}\leq 2C_{q-m}.
\end{align*}
Now, define $r_{\triangle v}$ by
\[
r_{\triangle v}=\min\{a_1,a_2,a_3,\ell,1/2\},
\]
and suppose $\triangle v$ is sufficiently small in the following sense:
\[
\triangle v<r_{\triangle v}.
\]
Then, $f^n_{i,j}$ satisfies $E^n$ for all $n\geq 0$.
\end{theorem}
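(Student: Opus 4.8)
The plan is to prove the statement by induction on $n$, carrying the full composite property $E^n=E^n_1\wedge E^n_2$ at every step. It is essential to propagate $E^n_1$ (the pointwise bounds $(A^n)$, $(B^n)$ on $\widetilde f^n$) and $E^n_2$ (the macroscopic bounds $(C^n)$, $(D^n)$) \emph{simultaneously}, because they are coupled: applying Lemma \ref{Control Max} to control the discrete Gaussian requires its macroscopic inputs to be under control, i.e. it requires $E^n_2$, while the macroscopic bounds in $E^{n+1}_2$ are in turn read off from the pointwise bounds in $E^{n+1}_1$. For the base case $n=0$, all four statements follow directly from the hypotheses of Theorem \ref{Existence_Theorem}: $(A^0)$ is exactly Lemma \ref{f tilde{f}}, $(B^0)$ is the assumed lower bound $f_0\geq C_{0,1}e^{-C_{0,2}|v|^{\alpha}}$ evaluated at the foot of the characteristic $\widetilde f^0_{i,j}=f_0(x_i-v_j\triangle t,v_j)$, and $(C^0)$, $(D^0)$ then follow by inserting these into the Riemann-sum inequalities guaranteed by $\triangle v<\ell$.

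For the inductive step, assume $E^n$ and first establish $E^{n+1}_1$. Because $-1/2<\nu<1$ forces $-1/2<\widetilde\nu<1$ and $(C^n)$ supplies a lower bound on $\widetilde T^n_i$, Lemma \ref{equivalence} shows that $\widetilde{\mathcal{T}}^n_{\widetilde\nu,i}$ is uniformly positive definite with determinant bounded below; this is precisely the ingredient that prevents the prefactor $1/\sqrt{\det}$ and the inverse $\{\widetilde{\mathcal T}^n_{\widetilde\nu,i}\}^{-1}$ from blowing up, so that the hypotheses of Lemma \ref{Control Max} (together with the rest of $E^n_2$) are met and $\|\mathcal{M}_{\widetilde\nu}(\widetilde f^n)\|_{L^\infty_q}\leq C_{\mathcal M}\|\widetilde f^n\|_{L^\infty_q}$. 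Taking the weighted norm of the scheme (\ref{main scheme}), a convex combination, yields the one-step estimate $\|f^{n+1}\|_{L^\infty_q}\leq \frac{\kappa+C_{\mathcal M}A_\nu\triangle t}{\kappa+A_\nu\triangle t}\|\widetilde f^n\|_{L^\infty_q}$; combining this with the inductive bound $(A^n)$ and one further application of Lemma \ref{f tilde{f}} gives $(A^{n+1})$. For the lower bound, non-negativity of the discrete Gaussian gives $f^{n+1}_{i,j}\geq \frac{\kappa}{\kappa+A_\nu\triangle t}\widetilde f^n_{i,j}$, and since the reconstruction at a fixed velocity index $j$ is a convex combination it preserves any velocity-dependent lower bound; fed with $(B^n)$ this produces $(B^{n+1})$.

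It remains to deduce $E^{n+1}_2$ from the pointwise bounds just obtained, which I would do in the order density, velocity, temperature. The density bounds are immediate: summing $(B^{n+1})$ against $(\triangle v)^3$ and using $\sum_j e^{-C_{0,2}|v_j|^{\alpha}}(\triangle v)^3\geq \tfrac12 C_\alpha$ gives the $\widetilde\rho^{n+1}_i$ lower bound of $(C^{n+1})$, while summing $(A^{n+1})$ against the decaying weight gives the $\widetilde\rho^{n+1}_i$ upper bound of $(D^{n+1})$. The velocity bound follows from $|\widetilde U^{n+1}_i|\leq (\widetilde\rho^{n+1}_i)^{-1}\sum_j \widetilde f^{n+1}_{i,j}|v_j|(\triangle v)^3$, using the just-established density lower bound in the denominator. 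The temperature bounds are the substance of the argument and are handled through Lemma \ref{moments estimate discrete}: the lower bound on $\widetilde T^{n+1}_i$ comes by localizing the second-moment sum $\sum_j \widetilde f^{n+1}_{i,j}|v_j-\widetilde U^{n+1}_i|^2(\triangle v)^3$ to a velocity shell away from the (now bounded) mean $\widetilde U^{n+1}_i$, where $(B^{n+1})$ guarantees a definite amount of spread, and the upper bound follows by the reverse estimate using $(A^{n+1})$ and the density lower bound.

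I expect the main obstacle to be exactly this last step, closing the moment estimates of Lemma \ref{moments estimate discrete}, and this is where the threshold $r_{\triangle v}=\min\{a_1,a_2,a_3,\ell,1/2\}$ enters decisively. As flagged in the introduction, the moment decomposition needs ratios such as $\widetilde\rho^n_i\widetilde T^n_i/\|\widetilde f^n\|_{L^\infty_q}$ and $\|\widetilde f^n\|_{L^\infty_q}/\widetilde\rho^n_i$ to stay strictly above $\triangle v$; the constants $a_1$, $a_2$, $a_3$ are calibrated to be lower bounds for these ratios, which are themselves controlled uniformly in $n$ by the bounds in $E^n$, so the requirement $\triangle v<a_1,a_2,a_3$ forces the decomposition to exist and the second-moment estimate not to degenerate. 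The delicate feature is that every quantity in $E^n$ must be controlled uniformly in $n$ \emph{and} in $\nu$, which is why the bounds are recorded with explicit exponential factors depending only on $T^f$, $\kappa$ and $A_\nu$ rather than on $n$, and why the geometric factors in $(A^n)$ and $(B^n)$ are immediately dominated by their $n$-free exponential envelopes via $(1+x)^n\leq e^{nx}$.
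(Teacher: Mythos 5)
Your proposal is correct and follows essentially the same route as the paper: induction propagating the full property $E^n$, with the base case as in Lemma \ref{strict positiveness}, the step $E^n\Rightarrow (A^{n+1})\wedge(B^{n+1})$ exactly as in Lemmas \ref{LB f prop} and \ref{f stability} (strict positivity of the discrete Gaussian plus Lemma \ref{Control Max} applied under $E^n$, then convexity of the reconstruction), and the passage $(A^{n+1})\wedge(B^{n+1})\Rightarrow (C^{n+1})\wedge(D^{n+1})$ via Riemann sums and Lemma \ref{moments estimate discrete} as in Lemmas \ref{LB T Lemma} and \ref{UB rho Lemma}, with $a_1,a_2,a_3$ guaranteeing the optimizing radii exist. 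The only cosmetic deviation is your description of the temperature lower bound (localizing the second-moment sum away from the bounded mean using $(B^{n+1})$), where the paper instead rearranges the moment estimate (1) as $\widetilde T^n_i\geq\big(\widetilde\rho^n_i/(C_M\|\widetilde f^n\|_{L^{\infty}_q})\big)^{2/3}$; either mechanism closes the induction.
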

We postpone the proof to the end of this section, after establishing several preliminary results. We begin with the discrete moment
estimates:
\begin{lemma}\label{moments estimate discrete} Let $q>5$. Suppose $f^n_{i,j}$ satisfies $E^n$
and $\triangle v$ satisfies the smallness condition stated in Theorem \ref{stability theorem}.
Then there exists a positive constant $C_{M}$
which depends only on $q$, such that
\begin{align}\label{PS1}
\begin{array}{ll}
\displaystyle(1)&\displaystyle \frac{\widetilde{\rho_i}^n}{(\widetilde{T}^n_i)^{\frac{3}{2}}}\leq C_{M} \|f^n\|_{L^{\infty}_q},\\
\displaystyle(2)&\displaystyle\widetilde{\rho}^n_i(\widetilde{T}^n_i+|\widetilde{U}^n_i|^2)^{\frac{q-3}{2}}\leq C_{M} \|f^n\|_{L^{\infty}_q},\\
\displaystyle(3)&\displaystyle\frac{\widetilde{\rho}^n| \widetilde{U}^n_i|^{q+3}}{[(\widetilde{T}^n_i+|\widetilde{U}^n_i|^2)\widetilde{T}^n_i]^{\frac{3}{2}}}
\leq C_{M}\|f^n\|_{L^{\infty}_q},
\end{array}
\end{align}
for all $n$.
\end{lemma}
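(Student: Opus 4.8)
The plan is to reduce all three bounds to two elementary inputs and a single split-and-optimize device. The first input is the pointwise control $\widetilde f^n_{i,j}\le\|f^n\|_{L^\infty_q}(1+|v_j|)^{-q}$, which is immediate from Lemma \ref{f tilde{f}} and the definition of $\|\cdot\|_{L^\infty_q}$. The second is the discrete energy identity $\widetilde\rho^n_i\big(3\widetilde T^n_i+|\widetilde U^n_i|^2\big)=\sum_j\widetilde f^n_{i,j}|v_j|^2(\triangle v)^3$, together with its companions $\widetilde\rho^n_i$ and $\widetilde\rho^n_i\widetilde T^n_i$. I would carry out every estimate at the level of the discrete sums and only compare with the corresponding velocity integrals at the end; the three smallness requirements on $\triangle v$ listed in Theorem \ref{stability theorem} are precisely what makes those Riemann-sum comparisons hold with constants depending only on $q$.

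For (1) I would split the sum defining $\widetilde\rho^n_i$ at a radius $R$ centered at $\widetilde U^n_i$. On $\{|v_j-\widetilde U^n_i|<R\}$ I bound the weight by $1$ and the number of grid nodes by a constant multiple of the ball volume, so the near part is at most $C\|f^n\|_{L^\infty_q}(R+\triangle v)^3$; on $\{|v_j-\widetilde U^n_i|\ge R\}$ I use $1\le|v_j-\widetilde U^n_i|^2/R^2$ to bound the far part by $3\widetilde\rho^n_i\widetilde T^n_i/R^2$. Taking $R=\sqrt{6\widetilde T^n_i}$ turns the far part into $\tfrac12\widetilde\rho^n_i$, which is absorbed, and yields $\widetilde\rho^n_i\le C(\widetilde T^n_i)^{3/2}\|f^n\|_{L^\infty_q}$. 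The lone discrete point is that $(R+\triangle v)^3\le CR^3$ needs $R\ge\triangle v$, i.e. $\sqrt{\widetilde T^n_i}\ge\triangle v$, which follows from the lower bound on $\widetilde T^n_i$ in $E^n$ and the restriction $\triangle v<a_1$.

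Estimates (2) and (3) are the substance of the lemma, and the naive route fails here: controlling an order-$(q-3)$ moment through the pointwise weight alone leads to a logarithmically divergent velocity integral, so $C_M$ cannot be read off that way. The remedy is a split-and-optimize argument. For (2), since $\widetilde T^n_i+|\widetilde U^n_i|^2\le\big(3\widetilde T^n_i+|\widetilde U^n_i|^2\big)=\sum_j\widetilde f^n_{i,j}|v_j|^2(\triangle v)^3/\widetilde\rho^n_i$, it suffices to show that the normalized mass $\widetilde\rho^n_i/\|f^n\|_{L^\infty_q}$ and the normalized energy control each other through $\text{(energy)}\le C_q\,\text{(mass)}^{1-2/(q-3)}$. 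I would prove this by splitting the energy sum at a radius $R$: the inner part is at most $R^2\widetilde\rho^n_i$, while the outer part is controlled by the pointwise weight, whose tail $\int_{|v|\ge R}|v|^2(1+|v|)^{-q}\,dv$ converges \emph{precisely} because $q>5$ (this is the source of the factor $1/(q-5)$). Optimizing $R$ makes the two powers of the energy scale cancel exactly, leaving the claimed bound with a constant depending only on $q$; the fractional exponents appearing in $a_2$ simply record this optimal $R$. Estimate (3), which resolves the regime where $|\widetilde U^n_i|$ dominates, follows from an entirely analogous but heavier split involving a higher moment, with $a_3$ encoding the corresponding optimal radius.

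The main obstacle is exactly this borderline $(q-3)$-moment in (2) and (3): the whole game is to arrange the split so that the offending power of $\widetilde T^n_i+|\widetilde U^n_i|^2$ cancels while the surviving velocity integral stays convergent (which is where the hypothesis $q>5$ is indispensable), and, simultaneously, to keep every sum-to-integral passage uniform. The latter is what forces the optimal radii in all three estimates to exceed $\triangle v$, and it is guaranteed by feeding the lower bounds on $\widetilde\rho^n_i$ and $\widetilde T^n_i$ from $E^n$ into the smallness condition $\triangle v<\min\{a_1,a_2,a_3\}$.
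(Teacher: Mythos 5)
Your treatments of (1) and (2) are correct and are essentially the paper's own argument: split the density (resp.\ energy) sum at a radius, bound the near part by volume counting against the sup norm (resp.\ by $R^2\widetilde\rho^n_i$), bound the far part by the second moment (resp.\ by the weighted tail integral, whose convergence is exactly the hypothesis $q>5$ and which produces the $1/(q-5)$ factor), and choose the radius to balance; the role you assign to $E^n$ and to $\triangle v<\min\{a_1,a_2,a_3\}$ — making the balancing radius exceed $\triangle v$ so that a valid $r>0$ exists — is precisely how the paper uses them. Your one deviation in (1), fixing $R=\sqrt{6\widetilde T^n_i}$ and absorbing $\tfrac12\widetilde\rho^n_i$ rather than equating the two terms, is a harmless variant.

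Estimate (3) is where your proposal has a genuine gap. You dismiss it as ``entirely analogous but heavier,'' but the analogous argument provably does not yield (3). If you split $\widetilde\rho^n_i|\widetilde U^n_i|$ at a radius $R$ about $\widetilde U^n_i$, bound the near part as in (1)--(2) by sup norm times volume, $C\|f^n\|_{L^{\infty}_q}(|\widetilde U^n_i|+R)R^3$, and bound the far part by Cauchy--Schwarz, then optimizing in $R$ gives only
\begin{align*}
\widetilde\rho^n_i\,|\widetilde U^n_i|^{3}\leq C\|f^n\|_{L^{\infty}_q}\big[(3\widetilde T^n_i+|\widetilde U^n_i|^2)\widetilde T^n_i\big]^{3/2},
\end{align*}
i.e.\ the power $3$ of $|\widetilde U^n_i|$, not $q+3$. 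This is structural, not an artifact of sloppiness: neither the sup-norm/volume bound on the near part nor Cauchy--Schwarz on the far part uses the weight exponent $q$, so no balancing of these two bounds can produce the $q$-dependent exponent $q+3$. And the weaker power is useless, because (3) is invoked exactly in the regime $|\widetilde U^n_i|>\{\widetilde T^n_i\}^{1/2}$ (case (a) in the proof of Lemma \ref{Control Max}), where $|\widetilde U^n_i|^{3}$ cannot be traded for $|\widetilde U^n_i|^{q+3}$. The missing ingredient in the paper's proof is a H\"{o}lder interpolation on the near region with exponents $q/(q-1)$ and $q$,
\begin{align*}
\sum_{|v_j-\widetilde U^n_i|\leq r+\triangle v}\widetilde f^n_{i,j}|v_j|(\triangle v)^3
\leq \Big\{\sum_{j}\widetilde f^n_{i,j}(\triangle v)^3\Big\}^{1-\frac1q}
\Big\{\sum_{|v_j-\widetilde U^n_i|\leq r+\triangle v}\widetilde f^n_{i,j}|v_j|^q(\triangle v)^3\Big\}^{\frac1q}
\leq C\{\widetilde\rho^n_i\}^{1-\frac1q}\|f^n\|^{\frac1q}_{L^{\infty}_q}(r+\triangle v)^{\frac3q},
\end{align*}
paired with the Cauchy--Schwarz bound $\frac{C\widetilde\rho^n_i}{r+\triangle v}\{(3\widetilde T^n_i+|\widetilde U^n_i|^2)\widetilde T^n_i\}^{1/2}$ on the far region, in which $|v_j|$ is deliberately coupled with $|v_j-\widetilde U^n_i|$. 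It is the exponent $1/q$ injected by H\"{o}lder that, after balancing the two bounds, produces the exponents $1-\frac{1}{q+3}$, $\frac{1}{q+3}$ and $\frac{3}{2(q+3)}$ whose $(q+3)$-th power is exactly (3). Since neither this interpolation nor the Cauchy--Schwarz pairing appears in your outline, the proof of (3) as you describe it would not go through.
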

\begin{proof}
(1) We split the macroscopic density into to the following two parts:
\begin{align*}
\widetilde{\rho}^n_{i}
=\sum_{|v_j-\widetilde{U}^n_i|< r+\triangle v} \widetilde{f}^n_{i,j}(\triangle v)^3
+\sum_{|v_j-\widetilde{U}^n_i|\geq r+\triangle v} \widetilde{f}^n_{i,j}(\triangle v)^3.
\end{align*}
Then we see that
\begin{align*}
\sum_{|v_j-\widetilde{U}^n_i|\leq r+\triangle v} \widetilde{f}^n_{i,j}(\triangle v)^3&\leq \|\widetilde{f}^n\|_{L^{\infty}_0}\!\!\!\!\sum_{|v_j-\widetilde{U}^n_i|\leq r+\triangle v} (\triangle v)^3\cr
&\leq \pi\|\widetilde{f}^n\|_{L^{\infty}_0}(r+2\triangle v)^3\cr
&\leq 8\pi\|\widetilde{f}^n\|_{L^{\infty}_0}(r+\triangle v)^3,
\end{align*}
and
\begin{align*}
\sum_{|v_j-\widetilde{U}^n_i|\geq r+\triangle v} \widetilde{f}^n_{i,j}(\triangle v)^3&=\sum_{|v_j-\widetilde{U}^n_i|\geq r+\triangle v} \widetilde{f}^n_{i,j}\frac{|v_j-\widetilde{U}^n_i|^2}{|v_j-\widetilde{U}^n_i|^2}(\triangle v)^3\cr
&\leq\frac{1}{(r+\triangle v)^2}\sum_{j} \widetilde{f}^n_{i,j}|v_j-\widetilde{U}^n_i|^2(\triangle v)^3\cr
&=\frac{3\widetilde{\rho}^n_i\widetilde{T}^n_i}{(r+\triangle v)^2},
\end{align*}
yielding
\begin{align*}
\widetilde{\rho}^n_{i}\leq 8\pi\|\widetilde{f}^n\|_{L^{\infty}_q}(r+\triangle v)^3
+\frac{3\widetilde{\rho}^n_i\widetilde{T}^n_i}{(r+\triangle v)^2}.
\end{align*}
We then optimize $r$ by equating the two terms on the right hand sides, to obtain
\begin{eqnarray}\label{find r}
r+\triangle v=\left(\frac{3\widetilde{\rho}^n_i\widetilde{T}^n_i}{8\pi\|\widetilde{f}^n\|_{L^{\infty}_q}}\right)^{1/5}.
\end{eqnarray}
Using the fact that $f^n_{i,j}$ satisfies $E^n$, it can be easily verified that the r.h.s is greater than $a_1$, so that
\begin{eqnarray*}
\left(\frac{3\widetilde{\rho}^n_i\widetilde{T}^n_i}{8\pi\|\widetilde{f}^n\|_{L^{\infty}_q}}\right)^{1/5} \geq a_1>\triangle v.
\end{eqnarray*}
Therefore, we can always find a positive number $r$
satisfying (\ref{find r}). Inserting this, one finds
\begin{align*}
\widetilde{\rho}^n_{i}\leq C_M\{\widetilde{T}^n_i\}^{3/2}\|\widetilde{f}^n\|_{L^{\infty}_q}.
\end{align*}
Since the positivity of $\widetilde{T}^n_{i}$ is guaranteed by $(C^n)$, we can divide both sides by $\big\{\widetilde{T}^n_{i}\big\}^{3/2}$ to get the desired estimate.
\newline
(2) We split the domain into $\{|v_j|>r+2\triangle v\}$ and $\{|v_j|\leq r+2\triangle v\}$:
\begin{align*}
\widetilde{\rho}^n_{i}(3\widetilde{T}^n_i+|\widetilde{U}^n_i|^2)
&=\sum_{|v_j|>r+2\triangle v}\widetilde{f}^n_{i,j}|v_j|^2(\triangle v)^3
+\sum_{|v_j|\leq r+2\triangle v}\widetilde{f}^n_{i,j}|v_j|^2(\triangle v)^3.
\end{align*}
The first term is bounded by
\begin{align*}
\sum_{|v_j|>r+2\triangle v}\widetilde{f}^n_{i,j}\frac{|v_j|^q}{|v_j|^{q-2}}(\triangle v)^3
&\leq\|\widetilde f^n\|_{L^{\infty}_q}\sum_{|v_j|>r+2\triangle v}\frac{(\triangle v)^3}{|v_j|^{q-2}}\cr
&\leq\|\widetilde f^n\|_{L^{\infty}_q}\int_{|v|>r+\triangle v}\frac{dv}{|v|^{q-2}}\cr
&\leq \frac{4\pi}{q-5}\frac{\|\widetilde{f}^n\|_{L^{\infty}_q}}{(r+\triangle v)^{q-5}}.
\end{align*}
For the second term, we compute
\begin{align*}
\sum_{|v_j|\leq r+2\triangle v}\widetilde{f}^n_{i,j}|v_j|^2(\triangle v)^3
\leq(r+2\triangle v)^2\sum_{j}\widetilde{f}^n_{i,j}(\triangle v)^3
\leq 4\widetilde{\rho}^n_i (r+\triangle v)^2.
\end{align*}
Consequently,
\begin{align*}
\widetilde{\rho}^n_{i}(3\widetilde{T}^n_i+|\widetilde{U}^n_i|^2)
&\leq \frac{4\pi}{q-5}\frac{\|\widetilde{f}^n\|_{L^{\infty}_q}}{(r+\triangle v)^{q-5}}+4\widetilde{\rho}^n_i (r+\triangle v)^2
\leq C\left\{\widetilde{\rho}^n_i\right\}^{1-\frac{2}{q-3}}\left\{\|f^n\|_{L^{\infty}_q}\right\}^{2/(q-3)},
\end{align*}
where the latter inequality follows from optimizing $r$ by taking
\begin{align*}
r+\triangle v=\left\{\frac{\pi}{q-5}\frac{\|\widetilde{f}^n\|_{L^{\infty}_q}}{\widetilde{\rho}^n_i}\right\}^{1/(q-3)}.
\end{align*}
The r.h.s is larger than $a_2$ by the fact that $f^n_{i,j}$ satisfies $E^n$,
and we can find the optimizing $r$  by a similar argument as in the previous case.
\newline
(3) We decompose the summational index of  $\widetilde{\rho}^n_i |\widetilde{U}^n_i|$ as follows:
\begin{align*}
\widetilde{\rho}^n_i |\widetilde{U}^n_i|
\leq \sum_{|v_j-\widetilde{U}^n_i|\leq r+\triangle v}\widetilde{f}^n_{i,j}|v_j|(\triangle v)^3+ \sum_{|v_j-\widetilde{U}^n_i|> r+\triangle v}\widetilde{f}^n_{i,j}|v_j|(\triangle v)^3
\equiv I+II.
\end{align*}
Then, we apply the H\"{o}lder inequality to bound $I$ as
\begin{align*}
I&\leq\left\{\sum_{|v_j-\widetilde{U}^n_i|\leq r+\triangle v}\widetilde{f}^n_{i,j}(\triangle v)^3\right\}^{1-1/q}
\left\{ \sum_{|v_j-\widetilde{U}^n_i|\leq r+\triangle v}\widetilde{f}^n_{i,j}|v_j|^q(\triangle v)^3\right\}^{1/q}\cr
&\leq \pi^{1/q}\left\{\widetilde{\rho}^n_i\right\}^{1-\frac{1}{q}}\|f\|^{1/q}_{L^{\infty}_q}(r+2\triangle v)^{3/q}\cr
&\leq (8\pi)^{1/q}\left\{\widetilde{\rho}^n_i\right\}^{1-1/q}\|f\|^{1/q}_{L^{\infty}_q}(r+\triangle v)^{3/q}.
\end{align*}
For $II$, we employ the Schwartz inequality to see that
\begin{align*}
II&\leq\frac{1}{r+\triangle v}\sum_{|v_j-\widetilde{U}^n_i|\geq r+2\triangle v}\widetilde{f}^n_{i,j} |v_j-\widetilde{U}^n_i||v_j|(\triangle v)^3\cr
&\leq \frac{1}{r+2\triangle v}\left\{\sum_j\widetilde{f}^n_{i,j}|v_j|^2(\triangle v)^3\right\}^{1/2}\left\{\sum_j\widetilde{f}^n_{i,j}|v_j-U^n_i|^2(\triangle v)^3\right\}^{1/2}\cr
&\leq \frac{1}{r+\triangle v}\left\{\widetilde{\rho}^n_i(3\widetilde{T}^n_i+|\widetilde{U}^n_i|^2)\right\}^{1/2}\left\{3\widetilde{\rho}^n_i\widetilde{T}^n_i\right\}^{1/2}\cr
&= \frac{3^{1/2}\widetilde{\rho}^n_i}{r+\triangle v}\left\{3\widetilde{T}^n_i+|\widetilde{U}^n_i|^2\right\}^{1/2}\left\{\widetilde{T}^n_i\right\}^{1/2}.
\end{align*}
Therefore,
\begin{align*}
\widetilde{\rho}^n_i |\widetilde{U}^n_i|&\leq (8\pi)^{1/q}\left\{\widetilde{\rho}^n_i\right\}^{1-1/q}\|f\|^{\frac{1}{q}}_{L^{\infty}_q}(r+\triangle v)^{3/q}
+\frac{3^{1/2}\widetilde{\rho}^n_i}{r+\triangle v}\left\{3\widetilde{T}^n_i+|\widetilde{U}^n_i|^2\right\}^{1/2}\left\{\widetilde{T}^n_i\right\}^{1/2}.
\end{align*}
We then derive the desired result by optimizing the above estimate by setting $r$ as
\begin{align*}
r+\triangle v
=\left(\frac{3^{1/2}\left\{\widetilde{\rho}^n_i\right\}^{1/q}(3\widetilde{T}^n_i+|\widetilde{U}^n_i|^2)^{1/2}\big\{\widetilde{T}^n_i\big\}^{1/2}}{(8\pi)^{1/q}\|f^n\|^{1/q}_{L^{\infty}_q}}\right)^{q/(q+3)}.
\end{align*}
The fact that $f^n_{i,j}$ satisfies $E^n$ guarantees that the r.h.s is greater than or equal to $a_3$,
which guarantees the existence of $r>0$.
\end{proof}

We now show that the ellipsoidal Gaussian is controlled by the discrete distribution in $L^{\infty}_q$.
%
%
\begin{lemma}\label{Control Max} Suppose $f^n_{i,j}$ satisfies $E^n$, and $\triangle v<r_{\triangle v}$.
Then we have
\[
\|\mathcal{M}_{\tilde{\nu}}(\tilde{f}^n)\|_{L^{\infty}_q}\leq C_{\mathcal{M}}\|f^n\|_{L^{\infty}_q},
\]
for some constant $C_{\mathcal{M}}$ which depends only on $q$ and $\nu$.
\end{lemma}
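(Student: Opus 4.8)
The plan is to dominate the weighted discrete Gaussian $(1+|v_j|)^q\mathcal{M}_{\widetilde\nu,j}(\widetilde f^n_i)$ pointwise by a constant multiple of $\|f^n\|_{L^\infty_q}$ and then take the supremum over $(i,j)$, feeding each resulting moment into Lemma~\ref{moments estimate discrete}.

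First I would use Lemma~\ref{equivalence} to remove the matrix structure. Writing $c_1=\min\{1-\widetilde\nu,1+2\widetilde\nu\}$ and $c_2=\max\{1-\widetilde\nu,1+2\widetilde\nu\}$, the determinant bound $\det\widetilde{\mathcal T}^n_{\widetilde\nu,i}\ge c_1^3(\widetilde T^n_i)^3$ and the form bound $(v_j-\widetilde U^n_i)^\top\{\widetilde{\mathcal T}^n_{\widetilde\nu,i}\}^{-1}(v_j-\widetilde U^n_i)\ge |v_j-\widetilde U^n_i|^2/(c_2\widetilde T^n_i)$ together give
$$\mathcal{M}_{\widetilde\nu,j}(\widetilde f^n_i)\le\frac{\widetilde\rho^n_i}{(2\pi c_1\widetilde T^n_i)^{3/2}}\exp\Big(-\frac{|v_j-\widetilde U^n_i|^2}{2c_2\widetilde T^n_i}\Big),$$
where positivity of $\widetilde T^n_i$ and $\widetilde\rho^n_i$ (hence the legitimacy of these manipulations) is supplied by $E^n$. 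Since $\widetilde\nu=\kappa\nu/(\kappa+\triangle t)$ has the same sign as $\nu$ with $|\widetilde\nu|\le|\nu|$, one checks $c_1\ge\min\{1-\nu,1+2\nu\}>0$ and $c_2\le\max\{1,1-\nu,1+2\nu\}$, so both constants are controlled by $\nu$ alone, uniformly in the grid parameters; this is what forces the final constant $C_{\mathcal M}$ to depend only on $q$ and $\nu$.

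Next I would insert the weight and split on the two regimes $|w|\ge|\widetilde U^n_i|$ and $|w|<|\widetilde U^n_i|$, with $w:=v_j-\widetilde U^n_i$, so that only one growth is active at a time. On $\{|w|\ge|\widetilde U^n_i|\}$ one has $(1+|v_j|)^q\le 2^q(1+|w|)^q$ and, using the elementary bound $\sup_w(1+|w|)^qe^{-|w|^2/(2c_2\widetilde T^n_i)}\le C_{q,\nu}(1+(\widetilde T^n_i)^{q/2})$, the expression is dominated by a multiple of $\widetilde\rho^n_i/(\widetilde T^n_i)^{3/2}+\widetilde\rho^n_i(\widetilde T^n_i)^{(q-3)/2}$, the first term being controlled by estimate~(1) and the second by estimate~(2) via $(\widetilde T^n_i)^{(q-3)/2}\le(\widetilde T^n_i+|\widetilde U^n_i|^2)^{(q-3)/2}$. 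On $\{|w|<|\widetilde U^n_i|\}$ one bounds the exponential by $1$ and uses $(1+|v_j|)^q\le C_q(1+|\widetilde U^n_i|^q)$, reducing matters to $\widetilde\rho^n_i/(\widetilde T^n_i)^{3/2}$ and the genuinely delicate term $\widetilde\rho^n_i|\widetilde U^n_i|^q/(\widetilde T^n_i)^{3/2}$.

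I expect this last term to be the main obstacle, and it is exactly what estimate~(3) of Lemma~\ref{moments estimate discrete} is designed to absorb. I would dispose of it by a further dichotomy: when $|\widetilde U^n_i|^2\ge\widetilde T^n_i$ one has $(\widetilde T^n_i+|\widetilde U^n_i|^2)^{3/2}\le 2^{3/2}|\widetilde U^n_i|^3$, so estimate~(3) yields $\widetilde\rho^n_i|\widetilde U^n_i|^q/(\widetilde T^n_i)^{3/2}\le 2^{3/2}C_M\|f^n\|_{L^\infty_q}$; when $|\widetilde U^n_i|^2<\widetilde T^n_i$ one instead uses $|\widetilde U^n_i|^q\le(\widetilde T^n_i)^{q/2}$ to reduce to $\widetilde\rho^n_i(\widetilde T^n_i)^{(q-3)/2}$ and closes again with estimate~(2). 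Collecting the two regimes and tracking that every constant produced along the way depends only on $q$ and $\nu$ gives the claimed bound with $C_{\mathcal M}=C_{\mathcal M}(q,\nu)$.
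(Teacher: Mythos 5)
Your proposal is correct and follows essentially the same route as the paper's proof: reduce the matrix structure via Lemma \ref{equivalence}, separate the contributions of $|\widetilde U^n_i|$ and $|v_j-\widetilde U^n_i|$ (the paper does this by the pointwise split $|v_j|^q\leq C_q\{|\widetilde U^n_i|^q+|v_j-\widetilde U^n_i|^q\}$ rather than your domain dichotomy, which is equivalent), and absorb the delicate term $\widetilde\rho^n_i|\widetilde U^n_i|^q/\{\widetilde T^n_i\}^{3/2}$ by the same case distinction $|\widetilde U^n_i|^2\gtrless\widetilde T^n_i$ using estimates (3) and (2) of Lemma \ref{moments estimate discrete}. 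Your explicit verification that $c_1,c_2$ are controlled by $\nu$ uniformly in $\kappa$ and $\triangle t$ is a useful detail the paper leaves implicit.
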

\begin{proof}
We divide the proof into the two cases: $q=0$ and $q\neq0$.\newline
\noindent (1) $q=0$: Since the exponential part is less than or equal to 1, we see from Lemma \ref{equivalence} that
\begin{align*}
\mathcal{M}_{\tilde{\nu},j}(\tilde{f}^n_i)\leq \frac{\widetilde{\rho}^n_{i}}{\sqrt{\det(2\pi\widetilde{\mathcal{T}}^n_{\tilde{\nu},i})}}
\leq C_{\tilde{\nu}}\frac{\widetilde{\rho}^n_i}{\{\widetilde{T}^n_{i}\}^{3/2}}.
\end{align*}
Then Lemma \ref{moments estimate discrete} (1) gives the desired estimate.\newline
\noindent (2) $q\neq 0$:  
We split $v_j$ as:
\begin{align*}
|v_j|^q\mathcal{M}_{\tilde{\nu},j}(\tilde{f}^n_i)&\leq C_q\left\{|\widetilde{U}^n_{i}|^q+|v_j-\widetilde{U}^n_{i}|^q\right\}\mathcal{M}_{\tilde{\nu},j}(\tilde{f}^n_i)\cr
&\equiv I_1+I_2.
\end{align*}
(i) The estimate for $I_1$: We first bound the exponential part by 1 to get
\begin{align*}
I_1
\leq C_{\tilde{\nu}}\frac{|\widetilde{U}^n_i|^q\widetilde{\rho}^n_i}{\{\widetilde{T}^n_i\}^{3/2}}.
\end{align*}
(a) $|\widetilde{U}^n_i|> \{\widetilde{T}^n_i\}^{\frac{1}{2}}$: Lemma \ref{moments estimate discrete} (3) gives:
\begin{align*}
I_1\leq C\frac{|\widetilde{U}^n_i|^{q}\widetilde{\rho}^n_i}{\{\widetilde{T}^n_i\}^{\frac{3}{2}}}\leq C\frac{|\widetilde{U}^n_i|^{q+3}\widetilde{\rho}^n_i}{|\widetilde{U}^n_i|^3\{\widetilde{T}^n_i\}^{\frac{3}{2}}}
\leq C\frac{|\widetilde{U}^n_i|^{q+3}\widetilde{\rho}^n_i}{\big\{\widetilde{T}^n_i+|\widetilde{U}^n_i|^2\big\}^{\frac{3}{2}}\{\widetilde{T}^n_i\}^{\frac{3}{2}}}
\leq C_{\nu,q}\|\tilde{f}^n\|_{L^{\infty}_q}.
\end{align*}
(b) $|\widetilde{U}^n_i|\leq \{\widetilde{T}^n_i\}^{\frac{1}{2}}$: In this case, we employ Lemma \ref{moments estimate discrete} (2) as
\begin{align*}
I_1\leq \frac{\{\widetilde{T}^n_i\}^{\frac{q}{2}}\widetilde{\rho}^n_i}{\{\widetilde{T}^n_i\}^{3/2}}\leq
\widetilde{\rho}^n_i \{\widetilde{T}^n_i\}^{\frac{q-3}{2}}\leq \widetilde{\rho}^n_i\big\{\widetilde{T}^n_i+|\widetilde{U}^n_i|^2\big\}^{\frac{q-3}{2}}\leq C_{q}
\|\tilde{f}^n\|_{L^{\infty}_q}.
\end{align*}
(ii) The estimate for $I_2$: By Lemma \ref{equivalence}, we have
\begin{align*}
I_2
&\leq C_{\tilde{\nu}}|v_j-\widetilde{U}^n_i|^q\frac{\widetilde{\rho}^n_i}{\{\widetilde{T}^n_i\}^{3/2}}\exp\left(-C_{\nu}\frac{|v_j-\widetilde{U}^n_i|^2}{\widetilde{T}^n_i}\right)\cr
&= C_{\tilde{\nu}}\frac{\widetilde{\rho}^n_i}{\{\widetilde{T}^n_i\}^{3/2}}\{\widetilde{T}^n_i\}^{\frac{q}{2}}
\left\{\left(\frac{|v_j-\widetilde{U}^n_i|^2}{\widetilde{T}^n_i}\right)^{\frac{q}{2}}\exp\left(-C_{\nu}\frac{|v_j-\widetilde{U}^n_i|^2}{\widetilde{T}^n_i}\right)\right\}\cr
&\leq C_{\tilde{\nu}}\widetilde{\rho}^n_i \{\widetilde{T}^n_i\}^{\frac{q-3}{2}}.
\end{align*}
In the last line, we used the elementary inequality $|x^ae^{-bx}|\leq C_{a,b}$ for some positive $C_{a,b}$ ($a,b,x>0$).
Then, Lemma \ref{moments estimate discrete} (2) gives
\begin{align*}
\widetilde{\rho}^n_i \{\widetilde{T}^n_i\}^{\frac{q-3}{2}}\leq \widetilde{\rho}^n_i \big\{\widetilde{T}^n_i+|\widetilde{U}^n_i|^2\big\}^{\frac{q-3}{2}}\leq C_q\|\widetilde{f}^n\|_{L^{\infty}_q}.
\end{align*}
This completes the proof.
\end{proof}

%
%
%
%
%
%
%
%
%
%
%
%
\begin{lemma}\label{strict positiveness}
Let $\triangle v<r_{\triangle v}$. Assume that $f_0$ satisfies the assumptions of Theorem \ref{Existence_Theorem}. Then $f_0$ satisfies $E^0$.
\end{lemma}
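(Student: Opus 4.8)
The plan is to check the four inequalities defining $E^0 = E^0_1 \wedge E^0_2$ one at a time. The crucial simplification at $n=0$ is twofold: the geometric prefactors $\bigl(\frac{\kappa + C_{\mathcal{M}}A_\nu\triangle t}{\kappa + A_\nu\triangle t}\bigr)^n$ and $\bigl(\frac{\kappa}{\kappa + A_\nu\triangle t}\bigr)^n$ collapse to $1$, and every exponential weight $e^{-(\cdots)A_\nu T^f}$ is $\leq 1$, so the targets reduce to clean, $T^f$-independent bounds; moreover the macroscopic fields $\widetilde{\rho}^0_i,\widetilde{U}^0_i,\widetilde{T}^0_i$ are by definition \emph{exact continuous integrals} of $\widetilde{f_0}(x_i,v) = f_0(x_i - v\triangle t, v)$, so neither quadrature error nor the smallness of $\triangle v$ plays any role here.

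First, $E^0_1$. Inequality $(A^0)$ is Lemma \ref{f tilde{f}} at $n=0$, which gives $\|\widetilde{f}^0\|_{L^\infty_q} \leq \|f^0\|_{L^\infty_q}$. Inequality $(B^0)$ is just the pointwise Gaussian lower bound in the hypotheses of Theorem \ref{Existence_Theorem} (identifying its constants $C_0^1,C_0^2$ with $C_{0,1},C_{0,2}$): evaluating $f_0 \geq C_{0,1}e^{-C_{0,2}|v|^\alpha}$ at the transported node yields $\widetilde{f}^0_{i,j} = f_0(x_i - v_j\triangle t, v_j) \geq C_{0,1}e^{-C_{0,2}|v_j|^\alpha}$.

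For $E^0_2$, the density lower bound in $(C^0)$ is obtained by integrating $(B^0)$: $\widetilde{\rho}^0_i = \int_{\mathbb{R}^3} f_0(x_i - v\triangle t, v)\,dv \geq C_{0,1}\int_{\mathbb{R}^3}e^{-C_{0,2}|v|^\alpha}\,dv = C_{0,1}C_\alpha$, which dominates $\frac{1}{2}C_{0,1}C_\alpha e^{-\frac{A_\nu}{\kappa}T^f}$. The upper bounds $(D^0)$ follow by pairing the weight against these integrals: $\widetilde{\rho}^0_i \leq \|f_0\|_{L^\infty_q}\int (1+|v|)^{-q}\,dv$, $\widetilde{\rho}^0_i|\widetilde{U}^0_i| \leq \|f_0\|_{L^\infty_q}\int |v|(1+|v|)^{-q}\,dv$, and, using that the variance about the mean is minimal, $3\widetilde{\rho}^0_i\widetilde{T}^0_i = \int f_0|v-\widetilde{U}^0_i|^2\,dv \leq \int f_0|v|^2\,dv \leq \|f_0\|_{L^\infty_q}\int|v|^2(1+|v|)^{-q}\,dv$. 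Dividing the last two by the density lower bound $\frac{1}{2}C_{0,1}C_\alpha$ and absorbing the resulting integral constants into $C_q$ produces $(D^0)$, with the factor $e^{(\cdots)A_\nu T^f}\geq 1$ on its right-hand side leaving ample slack.

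The only non-routine step is the temperature lower bound in $(C^0)$. I cannot invoke Lemma \ref{moments estimate discrete}(1) here, since that lemma presupposes $E^n$ and using it at $n=0$ would be circular. Instead I would reprove its inequality $\widetilde{\rho}^0_i \leq C_M(\widetilde{T}^0_i)^{3/2}\|f_0\|_{L^\infty_q}$ directly in the continuous setting: split $\widetilde{\rho}^0_i$ over $\{|v-\widetilde{U}^0_i|<r\}$ and its complement, bound the first piece by the ball volume times $\|\widetilde{f}^0\|_{L^\infty_0}$ and the second by Chebyshev via $3\widetilde{\rho}^0_i\widetilde{T}^0_i/r^2$, and optimize in $r$. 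The continuous computation yields the same structural bound with a constant no larger than the discrete $C_M$, because the $\triangle v$-corrections $(r+2\triangle v)\leq 2(r+\triangle v)$ and the requirement that the optimizing radius exceed $\triangle v$ both disappear; hence it holds with precisely the $C_M$ appearing in $(C^0)$. Combining with $\widetilde{\rho}^0_i \geq \frac{1}{2}C_{0,1}C_\alpha$ gives $(\widetilde{T}^0_i)^{3/2} \geq \frac{C_{0,1}C_\alpha}{2C_M\|f_0\|_{L^\infty_q}}$, which is exactly $(C^0)$.
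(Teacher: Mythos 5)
Your proposal is correct and follows essentially the same route as the paper: $(A^0)$ via Lemma \ref{f tilde{f}}, $(B^0)$ and the density bound in $(C^0)$ directly from the Gaussian lower bound on $f_0$, $(D^0)$ by pairing the weight $(1+|v|)^{-q}$ against the continuous integrals, and the temperature lower bound in $(C^0)$ by the same split-and-optimize (Chebyshev) argument redone in the continuous setting. The only cosmetic difference is in matching constants: you argue the continuous split yields a constant no larger than the discrete $C_M$, while the paper derives the explicit bound $\widetilde{T}^0_i\geq\left(\frac{4\pi}{9}\right)^{1/3}\left(\frac{C_{0,1}C_{\alpha}}{\|f_0\|_{L^{\infty}_q}}\right)^{2/3}$ and, if needed, enlarges $C_M$ to $\max\left\{C_M,\frac{3}{4\sqrt{\pi}}\right\}$ --- two equivalent ways of closing the same bookkeeping step.
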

\begin{proof}
$\bullet$ $(A^0)$~ Thanks to the assumption on the initial data, we have $\|f_0\|_{L^{\infty}_q}<\infty$. Therefore, in view of Lemma \ref{f tilde{f}}, we have
\[
\|\tilde{f}^0\|_{L^{\infty}_q}\leq\|f^0\|_{L^{\infty}_q}\leq \|f_0\|_{L^{\infty}_q}<\infty.
\]

\noindent $\bullet$ $(B^0)$~ We recall  the lower bound assumption imposed on the initial data in Theorem \ref{Existence_Theorem} to see that
\[
\widetilde{f}^0_{i,j}=f_0(x_i-v_j\triangle t,v_j)\geq C_{0,1}e^{-C_{0,2}|v_j|^{\alpha}}.
\]
\noindent$\bullet$ $(C^0)$~ Using the lower bound on the initial data again, one finds
\begin{align*}
\widetilde{\rho}^0_i&=\int_{\mathbb{R}^3}f_0(x_i-v\triangle t,v)dv
\geq C_{0,1}\int_{\mathbb{R}^3}e^{-C_{0,2}|v|^{\alpha}}dv
= C_{0,1}C_{\alpha}>0.
\end{align*}
For the upper bound for $\widetilde{\rho}^0_{i}$, we decompose the integral domain as
\begin{align*}
\widetilde{\rho}^0_{i}&=\int_{\mathbb{R}^3}f_0(x_i-v\triangle t, v)dv\cr
&\leq \int_{|v-\widetilde{U}^0_i|\leq r}f_0(x_i-v\triangle t, v)dv+\int_{|v-\widetilde{U}^0_i|>r}f_0(x_i-v\triangle t, v)dv\cr
&\leq \frac{4\pi}{3}\|f_0\|_{L^{\infty}_q}r^3+\frac{3}{r^2}\widetilde{\rho}^0_i\widetilde{T}^0_i
\end{align*}
and optimize $r$ with
\begin{align*}
r=\left(\frac{9\widetilde{\rho}^0_i\widetilde{T}^0_i}{4\pi\|f_0\|_{L^{\infty}_q}}\right)^{1/5}
\end{align*}
to get
\begin{align*}
\widetilde{T}^0_i&\geq \left(\frac{4\pi}{9}\right)^{1/3}\left(\frac{\widetilde{\rho}^0_i}{\|f_0\|_{L^{\infty}_q}}\right)^{2/3}
\geq \left(\frac{4\pi}{9}\right)^{1/3}\left(\frac{C_{0,1}C_{\alpha}}{\|f_0\|_{L^{\infty}_q}}\right)^{2/3}.
\end{align*}
If necessary, we can replace $C_M$ in Lemma 4.4 by $\max\left\{C_M, \frac{3}{4\sqrt{\pi}}\right\}$ to get the desired result.\newline
%
%
\noindent $\bullet$  $(D^0)$~ Lemma \ref{f stability} gives
\begin{align*}
\widetilde{\rho}^0_{i}=\int_{\mathbb{R}^3} f_0(x_i-v \triangle t,v)dv \leq \|f_0\|_{L^{\infty}_q}\int_{\mathbb{R}^3}\frac{1}{(1+|v|)^q}
= C_q\|f_0\|_{L^{\infty}_q}.
\end{align*}
The estimate for $\widetilde{U}^0_i$ follows from
\begin{align*}
\big|\widetilde{U}^0_i\big|
&=\Big|\frac{1}{{\widetilde{\rho}^0_i}}\int_{\mathbb{R}^3} f_0(x_i-v \triangle t,v)vdv\Big|\cr
&\leq \{C_{\alpha}C_{0,1}\}^{-1}\|f_0\|_{L^{\infty}_q}\int_{\mathbb{R}^3}\frac{1}{(1+|v|)^{q-1}}dv\cr
&= C_{q-1}\{C_{\alpha}C_{0,1}\}^{-1}\|f_0\|_{L^{\infty}_q}.
\end{align*}
For the estimate of $\widetilde{T}^n_i$, we compute
\begin{align*}
3\widetilde{T}^0_i
&= \frac{1}{\widetilde{\rho}^0_{i}}\int_{\mathbb{R}^3}f_0(x_i-v\triangle t,v)|v|^2dv-|\widetilde{U}^0_{i}|^2\cr
&\leq \frac{1}{\widetilde{\rho}^0_{i}}\int_{\mathbb{R}^3}f_0(x_i-v\triangle t,v)|v|^2dv\cr
&\leq \frac{1}{\widetilde{\rho}^0_{i}}\|f_0\|_{L^{\infty}_q}\int_{\mathbb{R}^3}\frac{1}{(1+|v|)^{q-2}}\cr
&= C_{q-2}\{C_{0,1}C_{\alpha}\}^{-1}\|f_0\|_{L^{\infty}_q}.
\end{align*}
This completes the proof for $E^0$.
\end{proof}

%
%
%
%
\begin{lemma}\label{LB f prop}
Assume $f^{n-1}_{i,j}$ satisfies $E^{n-1}$. Then, $f^{n}_{i,j}$ satisfies $B^n$:
\begin{align*}
\widetilde{f}^{n}_{i,j}\geq C_{0,1}\left(\frac{\kappa}{\kappa+A_{\nu}\triangle t}\right)^ne^{-C_{0,1}|v_j|^{\alpha}}
\geq C_{0,1}e^{-\frac{A_{\nu}}{\kappa}T^f}e^{-C_{0,2}|v_j|^{\alpha}},
\end{align*}
for all $i,j$. From this, we also have
\begin{align*}
\|\widetilde{f}^{n}\|_{L^{\infty}_q}\geq
 \frac{1}{2}C_{0,1}C_{q,\alpha}e^{-\frac{A_{\nu}}{\kappa}T^f}.
\end{align*}
\end{lemma}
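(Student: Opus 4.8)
The plan is to read a lower bound for $f^{n}_{i,j}$ directly off the scheme \eqref{main scheme}, propagate it through one step using the inductive hypothesis $E^{n-1}$, and then transfer it to the reconstruction $\widetilde f^{n}_{i,j}$ via convexity of the interpolation. Writing the scheme at level $n$,
\[
f^{n}_{i,j}=\frac{\kappa}{\kappa+A_{\nu}\triangle t}\,\widetilde f^{n-1}_{i,j}+\frac{A_{\nu}\triangle t}{\kappa+A_{\nu}\triangle t}\,\mathcal M_{\widetilde\nu,j}(\widetilde f^{n-1}_i),
\]
I would first argue that the discrete ellipsoidal Gaussian is nonnegative. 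This is the one place where a genuine input is needed: by $(C^{n-1})$ the density $\widetilde\rho^{n-1}_i$ and temperature $\widetilde T^{n-1}_i$ are strictly positive, so Lemma \ref{equivalence} guarantees that $\widetilde{\mathcal T}^{n-1}_{\widetilde\nu,i}$ is positive definite; hence $\det\widetilde{\mathcal T}^{n-1}_{\widetilde\nu,i}>0$, its inverse exists, and $\mathcal M_{\widetilde\nu,j}(\widetilde f^{n-1}_i)$ is a well-defined nonnegative quantity. Since the weight $A_{\nu}\triangle t/(\kappa+A_{\nu}\triangle t)$ is also nonnegative, I may discard the Gaussian term to obtain the pointwise bound $f^{n}_{i,j}\geq \frac{\kappa}{\kappa+A_{\nu}\triangle t}\,\widetilde f^{n-1}_{i,j}$.

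Next I would invoke $(B^{n-1})$, which gives $\widetilde f^{n-1}_{i,j}\geq C_{0,1}\big(\kappa/(\kappa+A_{\nu}\triangle t)\big)^{n-1}e^{-C_{0,2}|v_j|^{\alpha}}$; combining this with the previous display yields
\[
f^{n}_{i,j}\geq C_{0,1}\Big(\frac{\kappa}{\kappa+A_{\nu}\triangle t}\Big)^{n}e^{-C_{0,2}|v_j|^{\alpha}},
\]
uniformly in the spatial index $i$. Because this bound depends on $j$ alone, and because $\widetilde f^{n}_{i,j}$ is a convex combination of $f^{n}_{s,j}$ and $f^{n}_{s+1,j}$ with nonnegative weights $(x(i,j)-x_s)/\triangle x$ and $(x_{s+1}-x(i,j))/\triangle x$ summing to one, the same lower bound passes verbatim to $\widetilde f^{n}_{i,j}$. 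This is the first inequality of $(B^n)$.

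For the second inequality of $(B^n)$ I would apply the elementary estimate $1+x\leq e^{x}$ with $x=A_{\nu}\triangle t/\kappa$, so that $\kappa/(\kappa+A_{\nu}\triangle t)\geq e^{-A_{\nu}\triangle t/\kappa}$; raising to the $n$-th power and using $n\triangle t\leq N_t\triangle t=T^f$ gives $\big(\kappa/(\kappa+A_{\nu}\triangle t)\big)^{n}\geq e^{-(A_{\nu}/\kappa)T^f}$, which is precisely the asserted bound. Finally, for the weighted norm I would multiply the pointwise estimate by $(1+|v_j|)^{q}$ and take the supremum over $i,j$, factoring out the $j$-independent constant to get
\[
\|\widetilde f^{n}\|_{L^{\infty}_q}\geq C_{0,1}e^{-\frac{A_{\nu}}{\kappa}T^f}\sup_j\big\{(1+|v_j|)^{q}e^{-C_{0,2}|v_j|^{\alpha}}\big\};
\]
the smallness condition $\triangle v<r_{\triangle v}$ from Theorem \ref{stability theorem} bounds this discrete supremum below by $\tfrac12 C_{q,\alpha}$, giving the claimed $\tfrac12 C_{0,1}C_{q,\alpha}e^{-(A_{\nu}/\kappa)T^f}$.

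Since the argument is a single-step propagation, there is no deep obstacle; the only point requiring care is the well-definedness and nonnegativity of the discrete Gaussian, which rests on the strict positivity of $\widetilde\rho^{n-1}_i$ and $\widetilde T^{n-1}_i$ encoded in $E^{n-1}$ together with the positive-definiteness supplied by Lemma \ref{equivalence}. Everything else reduces to convexity of the interpolation and the two elementary scalar inequalities.
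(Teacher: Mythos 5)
Your proposal is correct and follows essentially the same route as the paper's proof: drop the nonnegative Gaussian term in the scheme to get $f^n_{i,j}\geq \frac{\kappa}{\kappa+A_{\nu}\triangle t}\widetilde f^{n-1}_{i,j}$, apply $(B^{n-1})$, pass the $j$-dependent lower bound through the convex interpolation, and finish with $(1+x)^n\leq e^{nx}$ together with the $\triangle v$-smallness condition for the weighted-norm bound. Your explicit justification of the Gaussian's positivity via $(C^{n-1})$ and Lemma \ref{equivalence} only spells out what the paper compresses into one sentence.
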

\begin{proof}
Since $f^{n-1}_{i,j}$ satisfies $E^{n-1}$, $\mathcal{M}_{\widetilde{\nu},j}(\widetilde{f}^n_i)$ is strictly positive. Therefore, we have
from (\ref{main scheme})
\begin{align*}
f^{n}_{i,j}&\geq\frac{\kappa}{\kappa+A_{\nu}\triangle t}\widetilde{f}^{n-1}_{i,j},
\end{align*}
which, in view of the definition of $B^{n-1}$, gives
\begin{align}\label{same reasoning}
f^{n}_{i,j}
\geq C_{0,1}\left(\frac{\kappa}{\kappa+A_{\nu}\triangle t}\right)\left(\frac{\kappa}{\kappa+A_{\nu}\triangle t}\right)^{n-1}e^{-C_{0}^{2}|v_j|^{\alpha}}
= C_{0,1}\left(\frac{\kappa}{\kappa+A_{\nu}\triangle t}\right)e^{-C_{0}^{2}|v_j|^{\alpha}}.
\end{align}

This immediately leads to the same lower bound estimate for $\tilde{f}^n_{i,j}$:
\begin{align*}
\tilde{f}^{n}_{i,j}=a_{j_1}f^n_{s(i,j),j}+(1-a_{j_1})f^n_{s(i,j)+1,j}
\geq C_{0,1}\left(\frac{\kappa}{\kappa+A_{\nu}\triangle t}\right)^ne^{-C_{0,2}|v_j|^{\alpha}}
\end{align*}
with $a_{j_1}=a^{s^{(n)}}_{j_1}$. We suppressed the dependence on $n$, which is justified by Lemma \ref{a}.
Then we employ the following elementary inequality
$(1+x)^{n}\leq e^{nx}, (x\geq 0)$
to derive
\begin{eqnarray*}
\left(\frac{\kappa}{\kappa+A_{\nu}\triangle t}\right)^n=\left(1+A_{\nu}\frac{\triangle t}{\kappa}\right)^{-n}\geq e^{-nA_{\nu}\frac{\triangle t}{\kappa}}\geq e^{-\frac{A_{\nu}}{\kappa}T^f},
\end{eqnarray*}
where we used $n\triangle t\leq N_t\triangle t=T^f$.
The second estimate follows directly from this:
\[
\|\widetilde{f}^n_{i,j}\|_{L^{\infty}_q}\geq C_{0,1}e^{-\frac{A_{\nu}}{\kappa}T^f}\sup_j\big\{(1+|v_j|)^qe^{-C_{0,2}|v_j|^{\alpha}}\big\}
\geq\frac{1}{2}C_{0,1}C_{q,\alpha}e^{-\frac{A_{\nu}}{\kappa}T^f}
\]
This completes the proof.
\end{proof}
%
%
\begin{lemma}\label{f stability}
Assume $f^{n-1}_{i,j}$ satisfies $E^{n-1}$. Then, $f^{n}_{i,j}$ satisfies $A^n$:
\begin{align*}
\|f^{n}\|_{L^{\infty}_q}\leq\Big(\frac{\kappa+C_{\mathcal{M}}A_{\nu}\triangle t }{\kappa+A_{\nu}\triangle t}\Big)^{n}\|f^{0}\|_{L^{\infty}_q}
\leq e^{\frac{(C_{\mathcal{M}}-1)A_{\nu} T^f}{\kappa+A_{\nu}\triangle t}}\|f^0\|_{L^{\infty}_q}.
\end{align*}
\end{lemma}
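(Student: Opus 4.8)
The plan is to read a one–step contraction-type estimate directly off the scheme (\ref{main scheme}) and then iterate it. First I would apply the weighted norm $\|\cdot\|_{L^{\infty}_q}$ to both sides of (\ref{main scheme}). Since the two coefficients $\frac{\kappa}{\kappa+A_{\nu}\triangle t}$ and $\frac{A_{\nu}\triangle t}{\kappa+A_{\nu}\triangle t}$ are nonnegative and sum to $1$, multiplying by the weight $(1+|v_j|)^q$ nodewise, using the triangle inequality, and taking the supremum over $i,j$ yields
\begin{align*}
\|f^{n}\|_{L^{\infty}_q}\leq \frac{\kappa}{\kappa+A_{\nu}\triangle t}\,\|\widetilde{f}^{n-1}\|_{L^{\infty}_q}
+\frac{A_{\nu}\triangle t}{\kappa+A_{\nu}\triangle t}\,\|\mathcal{M}_{\widetilde{\nu}}(\widetilde{f}^{n-1})\|_{L^{\infty}_q}.
\end{align*}

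Next I would estimate the two terms separately. For the transport (reconstruction) term, Lemma \ref{f tilde{f}} gives $\|\widetilde{f}^{n-1}\|_{L^{\infty}_q}\leq\|f^{n-1}\|_{L^{\infty}_q}$. For the relaxation term, the hypothesis that $f^{n-1}$ satisfies $E^{n-1}$ together with the standing assumption $\triangle v<r_{\triangle v}$ lets me invoke Lemma \ref{Control Max}, so that $\|\mathcal{M}_{\widetilde{\nu}}(\widetilde{f}^{n-1})\|_{L^{\infty}_q}\leq C_{\mathcal{M}}\|f^{n-1}\|_{L^{\infty}_q}$. Substituting both bounds and collecting terms produces the single-step estimate
\begin{align*}
\|f^{n}\|_{L^{\infty}_q}\leq\frac{\kappa+C_{\mathcal{M}}A_{\nu}\triangle t}{\kappa+A_{\nu}\triangle t}\,\|f^{n-1}\|_{L^{\infty}_q}.
\end{align*}

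Then I would iterate this inequality. The same one-step bound holds at every earlier time level, since within the running induction of Theorem \ref{stability theorem} each $f^{m}$ with $m\leq n-1$ satisfies $E^{m}$; telescoping down to $m=0$ (where the bound is trivial) gives the first inequality of $(A^n)$. Alternatively, one can close it using only the stated hypothesis by noting that the estimate $A^{n-1}$ contained in $E^{n-1}$ already supplies $\|\widetilde{f}^{n-1}\|_{L^{\infty}_q}\leq\big(\frac{\kappa+C_{\mathcal{M}}A_{\nu}\triangle t}{\kappa+A_{\nu}\triangle t}\big)^{n-1}\|f^{0}\|_{L^{\infty}_q}$. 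For the second inequality I would rewrite the ratio as $\frac{\kappa+C_{\mathcal{M}}A_{\nu}\triangle t}{\kappa+A_{\nu}\triangle t}=1+\frac{(C_{\mathcal{M}}-1)A_{\nu}\triangle t}{\kappa+A_{\nu}\triangle t}$ and apply $(1+x)^{n}\leq e^{nx}$ for $x\geq0$ (this is where I use $C_{\mathcal{M}}\geq1$, which may be assumed without loss of generality by enlarging the constant), followed by $n\triangle t\leq N_t\triangle t=T^f$ to replace $n\triangle t$ by $T^f$ in the exponent.

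A word on difficulty: the substantive work has all been done upstream, in the moment bounds of Lemma \ref{moments estimate discrete} and the Gaussian control of Lemma \ref{Control Max}, so the present statement is essentially a clean assembly. The only genuinely delicate points are bookkeeping ones, namely ensuring that the convex structure of (\ref{main scheme}) survives passage to the weighted supremum (so that no constant larger than $1$ multiplies $\|\widetilde{f}^{n-1}\|_{L^{\infty}_q}$), and upgrading the one-step estimate to the $n$-th power. The latter is where the inductive framework of Theorem \ref{stability theorem} enters, and I expect this mild bookkeeping — rather than any analytic obstacle — to be the only subtlety.
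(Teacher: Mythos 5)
Your proposal is correct and follows essentially the same route as the paper: take the weighted norm of the convex combination in (\ref{main scheme}), control the Gaussian term by Lemma \ref{Control Max}, close the recursion with the bound supplied by $(A^{n-1})$ from the hypothesis $E^{n-1}$ (your stated alternative, which is exactly what the paper does), and finish with $(1+x)^n\leq e^{nx}$ and $n\triangle t\leq T^f$. The only cosmetic difference is that the paper applies $(A^{n-1})$ directly to $\|\widetilde{f}^{n-1}\|_{L^{\infty}_q}$ rather than first passing to $\|f^{n-1}\|_{L^{\infty}_q}$ via Lemma \ref{f tilde{f}}, which changes nothing of substance.
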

\begin{proof}
Applying  Lemma \ref{Control Max} and to (\ref{main scheme}), one finds
\begin{align*}
\|f^{n}\|_{L^{\infty}_q}&\leq\frac{\kappa}{\kappa+A_{\nu}\triangle t}\|\widetilde{f}^{n-1}\|_{L^{\infty}_q}
+\frac{A_{\nu}\triangle t}{\kappa+A_{\nu}\triangle t}\|{\mathcal M}_{\tilde{\nu}}(\widetilde{f}^{n-1})\|_{L^{\infty}_q}\\
&\leq\frac{\kappa}{\kappa+A_{\nu}\triangle t}\|\widetilde{f}^{n-1}\|_{L^{\infty}_q}
+\frac{A_{\nu}\triangle t}{\kappa+A_{\nu}\triangle t}C_{\mathcal{M}}\|\widetilde{f}^{n-1}\|_{L^{\infty}_q}\\
&\leq\frac{\kappa+C_{\mathcal{M}}A_{\nu}\triangle t }{\kappa+A_{\nu}\triangle t}\|\widetilde{f}^{n-1}\|_{L^{\infty}_q}.
\end{align*}
We then recall $A_{n-1}$ to bound this further by
\begin{align*}
\Big(\frac{\kappa+C_{\mathcal{M}}A_{\nu}\triangle t }{\kappa+A_{\nu}\triangle t}\Big)\Big(\frac{\kappa+C_{\mathcal{M}}A_{\nu}\triangle t }{\kappa+A_{\nu}\triangle t}\Big)^{n-1}\|f^{0}\|_{L^{\infty}_q}
\leq\Big(\frac{\kappa+C_{\mathcal{M}}A_{\nu}\triangle t }{\kappa+A_{\nu}\triangle t}\Big)^{n}\|f^{0}\|_{L^{\infty}_q}.
\end{align*}
The second estimate follows from
\begin{align*}
\Big(\frac{\kappa+C_{\mathcal{M}}A_{\nu}\triangle t }{\kappa+A_{\nu}\triangle t}\Big)^{n}\leq \Big(1+\frac{(C_{\mathcal{M}}-1)A_{\nu}\triangle t }{\kappa+A_{\nu}\triangle t}\Big)^{n}
\leq e^{\frac{(C_{\mathcal{M}}-1)A_{\nu} \,n\triangle t}{\kappa+A_{\nu}\triangle t}}
\leq e^{\frac{(C_{\mathcal{M}}-1)A_{\nu}\,T^f}{\kappa+A_{\nu}\triangle t}},
\end{align*}
where we used $(1+x)^n\leq e^{nx}$ and $n\triangle t\leq N_t\triangle t=T^f$.
\end{proof}

Using this, we can prove the uniform lower bound of the macroscopic fields:
%
%
\begin{lemma}\label{LB T Lemma}
Assume $f^{n}_{i,j}$ satisfies $A^{n} \wedge B^n$. Then, $f^{n}_{i,j}$ satisfies $C^n $:
\begin{align*}
\widetilde{\rho}^n_i&\geq \frac{1}{2}C_{0,1}C_{\alpha}e^{-\frac{A_{\nu}}{\kappa}T^f},\cr
\widetilde{T}^n_i
&\geq \left(\frac{C_{0,1}C_{\alpha}}{2C_M\|f_0\|_{L^{\infty}_q}}\right)^{2/3}e^{-\frac{2}{3}\left(\frac{1}{\kappa}+\frac{(C_{q,\nu}-1) T^f}{\kappa+A_{\nu}\triangle t}\right)A_{\nu}T^f}.
\end{align*}
Note also that Lemma \ref{equivalence} then immediately yields the lower bound for $\widetilde{\mathcal{T}}^n_{\tilde{\nu}}$:
\begin{align*}
\widetilde{\mathcal{T}}^n_{\tilde{\nu}}&\geq C_{M}C_{0,1}C_{\alpha}\min\{1-\nu,1+2\nu\}e^{-\left(\frac{1}{\kappa}+\frac{(C_{\mathcal{M}}-1) T^f}{\kappa+A_{\nu}\triangle t}\right)A_{\nu}T^f}Id.
\end{align*}
\end{lemma}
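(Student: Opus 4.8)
The plan is to establish the three lower bounds in sequence, each feeding the next, by chaining the hypotheses $A^n$ and $B^n$ with the discrete moment estimate of Lemma \ref{moments estimate discrete} and the equivalence estimate of Lemma \ref{equivalence}. No genuinely new idea is needed; the work is in organizing the previous lemmas and tracking the exponential constants.

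\emph{Density bound.} First I would sum the pointwise lower bound $B^n$ against $(\triangle v)^3$ to get
\[
\widetilde{\rho}^n_i = \sum_j \widetilde{f}^n_{i,j}(\triangle v)^3 \geq C_{0,1}e^{-\frac{A_\nu}{\kappa}T^f}\sum_j e^{-C_{0,2}|v_j|^\alpha}(\triangle v)^3.
\]
Since $\triangle v<r_{\triangle v}\leq \ell$, the first discrete Riemann-sum estimate in Theorem \ref{stability theorem} gives $\sum_j e^{-C_{0,2}|v_j|^\alpha}(\triangle v)^3\geq \tfrac12 C_\alpha$, which produces the claimed bound $\widetilde{\rho}^n_i\geq \tfrac12 C_{0,1}C_\alpha e^{-\frac{A_\nu}{\kappa}T^f}$.

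\emph{Temperature bound.} Next I invert Lemma \ref{moments estimate discrete}(1): as $\widetilde{T}^n_i>0$, the inequality $\widetilde{\rho}^n_i/(\widetilde{T}^n_i)^{3/2}\leq C_M\|f^n\|_{L^\infty_q}$ rearranges to $\widetilde{T}^n_i\geq \big(\widetilde{\rho}^n_i/(C_M\|f^n\|_{L^\infty_q})\big)^{2/3}$. Into the numerator I insert the density bound just obtained, and into the denominator the upper bound furnished by $A^n$ together with $\|f^0\|_{L^\infty_q}\leq\|f_0\|_{L^\infty_q}$, namely $\|f^n\|_{L^\infty_q}\leq e^{(C_{\mathcal M}-1)A_\nu T^f/(\kappa+A_\nu\triangle t)}\|f_0\|_{L^\infty_q}$. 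Collecting the two exponential factors under the $2/3$ power yields exactly
\[
\widetilde{T}^n_i \geq \left(\frac{C_{0,1}C_\alpha}{2C_M\|f_0\|_{L^\infty_q}}\right)^{2/3} e^{-\frac{2}{3}\left(\frac{1}{\kappa}+\frac{C_{\mathcal M}-1}{\kappa+A_\nu\triangle t}\right)A_\nu T^f},
\]
which is the asserted bound in $C^n$.

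\emph{Temperature-tensor bound.} Finally, Lemma \ref{equivalence}(1) gives $\widetilde{\mathcal T}^n_{\tilde\nu,i}\geq \min\{1-\tilde\nu,1+2\tilde\nu\}\,\widetilde{T}^n_i\,Id$. To rephrase the prefactor in terms of the original $\nu$, I observe that $\tilde\nu=\kappa\nu/(\kappa+\triangle t)$ lies strictly between $0$ and $\nu$ when $\nu>0$ and strictly between $\nu$ and $0$ when $\nu<0$; a short case analysis on the sign of $\tilde\nu$ then shows $\min\{1-\tilde\nu,1+2\tilde\nu\}\geq \min\{1-\nu,1+2\nu\}>0$. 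Substituting the temperature lower bound of the previous step delivers the stated lower bound for $\widetilde{\mathcal T}^n_{\tilde\nu,i}$. The one step that demands real care is this last one: the bound must be expressed through $\nu$ rather than the step-dependent $\tilde\nu$, because uniformity of the constant across the whole range $-1/2<\nu<1$ is precisely what the convergence theorem requires, and it is the sign-based case analysis that secures this cleanly.
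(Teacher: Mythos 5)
Your proposal is correct and follows essentially the same route as the paper's proof: sum the pointwise bound $B^n$ against $(\triangle v)^3$ and the Riemann-sum estimate for the density, invert Lemma \ref{moments estimate discrete}(1) together with the upper bound from $A^n$ for the temperature, and apply Lemma \ref{equivalence} for the tensor. Your closing case analysis showing $\min\{1-\tilde{\nu},1+2\tilde{\nu}\}\geq\min\{1-\nu,1+2\nu\}$ merely makes explicit a step the paper leaves implicit, so it is the same argument, not a different one.
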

\begin{proof}
For lower bound control for the discrete local density, we multiply $\widetilde{f}^n_{i,j}$ by $(\triangle v)^3$ and sum over $j$ to get
\begin{align*}
\widetilde{\rho}^n_{i}&=\sum_j\tilde{f}^n_{i,j}(\triangle v)^3
\geq C_{0,1}e^{-\frac{A_{\nu}}{\kappa}T^f}\sum_je^{-C_{0,2}|v_j|^{\alpha}}(\triangle v)^3
\geq \frac{1}{2}C_{\alpha}C_{0,1}e^{-\frac{A_{\nu}}{\kappa}T^f}.
\end{align*}
This, together with Lemma \ref{moments estimate discrete} (1) and Lemma \ref{f stability} gives the lower bound for $\widetilde{T}^{n}_i$:
\begin{align*}
\widetilde{T}^{n}_i&\geq\left(\frac{\widetilde{\rho}^n_i}{C_{M}\|\tilde{f}^n\|_{L^{\infty}_q}}\right)^{2/3}
\geq \left(\frac{C_{0,1}C_{\alpha}}{2C_M\|f_0\|_{L^{\infty}_q}}e^{-\left(\frac{1}{\kappa}+\frac{(C_{q,\nu}-1) T^f}{\kappa+A_{\nu}\triangle t}\right)A_{\nu}T^f}\right)^{2/3}.
\end{align*}
Then the lower bound for $\widetilde{\mathcal{T}}^{n}_i$ follows from the equivalence estimate in Lemma \ref{equivalence}.
\end{proof}

%
%
\begin{lemma}\label{UB rho Lemma}
Assume $f^{n}_{i,j}$ satisfies $A^{n} \wedge B^n$. Then, $f^{n}_{i,j}$ satisfies $D^n $:
\begin{align*}
\|\widetilde{\rho}^n\|_{L^{\infty}_x},~ \|\widetilde{U}^n\|_{L^{\infty}_x},~ \|\widetilde{T}^n\|_{L^{\infty}_x}
\leq 2C_{q}\Big\{1+\big(C_{0,1}C_{\alpha}\big)^{-1}\Big\}e^{\left(\frac{1}{\kappa}+\frac{(C_{\mathcal{M}}-1)}{\kappa+A_{\nu}\triangle t}\right)A_{\nu}T^f}\|f^0\|_{L^{\infty}_q}.
\end{align*}
\end{lemma}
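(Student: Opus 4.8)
The plan is to mimic the computation carried out for $(D^0)$ in Lemma~\ref{strict positiveness}, replacing the velocity integrals by Riemann sums controlled through the discretized bounds supplied by the smallness assumption $\triangle v<r_{\triangle v}$ in Theorem~\ref{stability theorem}. The two inputs I would use throughout are the upper bound $(A^n)$, giving $\|\widetilde f^n\|_{L^\infty_q}\le e^{\frac{(C_{\mathcal M}-1)A_\nu T^f}{\kappa+A_\nu\triangle t}}\|f^0\|_{L^\infty_q}$, and the lower density bound contained in $(B^n)$, namely $\widetilde\rho^n_i\ge\frac12 C_{0,1}C_\alpha e^{-\frac{A_\nu}{\kappa}T^f}$, which I would obtain by summing the pointwise lower bound $\widetilde f^n_{i,j}\ge C_{0,1}e^{-\frac{A_\nu}{\kappa}T^f}e^{-C_{0,2}|v_j|^\alpha}$ against $(\triangle v)^3$ and invoking $\sum_j e^{-C_{0,2}|v_j|^\alpha}(\triangle v)^3\ge\frac12 C_\alpha$. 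The recurring algebraic device is the elementary weight estimate $|v_j|^m(1+|v_j|)^{-q}\le(1+|v_j|)^{-(q-m)}$, which converts each moment sum into one of the tabulated sums $\sum_j (1+|v_j|)^{-(q-m)}(\triangle v)^3\le 2C_{q-m}$.

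For the density I would bound directly
\[
\widetilde\rho^n_i=\sum_j\widetilde f^n_{i,j}(\triangle v)^3\le\|\widetilde f^n\|_{L^\infty_q}\sum_j\frac{(\triangle v)^3}{(1+|v_j|)^q}\le 2C_q\|\widetilde f^n\|_{L^\infty_q},
\]
and then insert $(A^n)$; since the target bound carries the extra factor $1+(C_{0,1}C_\alpha)^{-1}\ge 1$ and a larger exponent, this estimate is already subsumed.

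For the bulk velocity I would first bound the flux and then divide by the lower density bound:
\[
\widetilde\rho^n_i|\widetilde U^n_i|\le\sum_j\widetilde f^n_{i,j}|v_j|(\triangle v)^3\le\|\widetilde f^n\|_{L^\infty_q}\sum_j\frac{(\triangle v)^3}{(1+|v_j|)^{q-1}}\le 2C_{q-1}\|\widetilde f^n\|_{L^\infty_q},
\]
so that $|\widetilde U^n_i|\le \big(4C_{q-1}/(C_{0,1}C_\alpha)\big)e^{\frac{A_\nu}{\kappa}T^f}\|\widetilde f^n\|_{L^\infty_q}$; inserting $(A^n)$ produces exactly the stated exponential $e^{(\frac1\kappa+\frac{C_{\mathcal M}-1}{\kappa+A_\nu\triangle t})A_\nu T^f}$ and the $(C_{0,1}C_\alpha)^{-1}$ prefactor. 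For the temperature I would use $3\widetilde\rho^n_i\widetilde T^n_i=\sum_j\widetilde f^n_{i,j}|v_j|^2(\triangle v)^3-\widetilde\rho^n_i|\widetilde U^n_i|^2\le\sum_j\widetilde f^n_{i,j}|v_j|^2(\triangle v)^3$, bound the right-hand side by $2C_{q-2}\|\widetilde f^n\|_{L^\infty_q}$ via the $m=2$ weight estimate, and again divide by the density lower bound.

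Collecting the three estimates and enlarging the generic constant $C_q$ so that it dominates $C_{q-1}$, $C_{q-2}$ and the numerical factors $2,4$ yields the single bound in the statement. I do not expect a genuine obstacle here; the only points requiring care are (i) that the lower bound on $\widetilde\rho^n_i$ is available from $(B^n)$ alone, so that the hypothesis $A^n\wedge B^n$ indeed suffices, and (ii) the bookkeeping that unifies the slightly different exponents and constants of the three bounds under one expression, which I handle by taking the larger exponent $(\frac1\kappa+\frac{C_{\mathcal M}-1}{\kappa+A_\nu\triangle t})A_\nu T^f$ and absorbing all remaining constants into $C_q$.
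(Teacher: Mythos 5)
Your proposal is correct and follows essentially the same route as the paper: bound $\widetilde{\rho}^n_i$ directly by $(A^n)$ and the weighted sum $\sum_j(1+|v_j|)^{-q}(\triangle v)^3\leq 2C_q$, then bound $|\widetilde{U}^n_i|$ and $\widetilde{T}^n_i$ by estimating the first and second moments with the weights $(1+|v_j|)^{-(q-1)}$, $(1+|v_j|)^{-(q-2)}$ and dividing by the density lower bound obtained from $(B^n)$ (which is exactly how the paper's Lemma \ref{LB T Lemma} produces it, so your observation that $A^n\wedge B^n$ suffices matches the paper's logical structure). The constant bookkeeping you describe, absorbing $C_{q-1}$, $C_{q-2}$ and the numerical factors into the stated constant, is also how the paper implicitly treats it.
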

\begin{proof}
Lemma \ref{f stability} gives
\begin{align*}
\widetilde{\rho}^n_{i}=\sum_j\widetilde{f}^n_{i,j}(\triangle v)^3 \leq \|f^n\|_{L^{\infty}_q}\sum_{j}\frac{(\triangle v)^3}{(1+|v_{j}|)^q}
\leq 2C_qe^{\frac{(C_{\mathcal{M}}-1) A_{\nu}T^f}{\kappa+A_{\nu}\triangle t}}\|f^0\|_{L^{\infty}_q}.
\end{align*}
We combine this with the lower bound estimates of $\widetilde{\rho}^n_i$ established in Lemma \ref{LB T Lemma} and the upper bound of the discrete solution in Lemma \ref{f stability} to obtain
\begin{align*}
\big|\widetilde{U}^n_i\big|&=\Big|\frac{1}{{\widetilde{\rho}^n_i}}\sum_j \tilde{f}^n_{i,j}v_j(\triangle v)^3\Big|\cr
&\leq \{C_{\alpha}C_{0,1}\}^{-1}e^{\frac{A_{\nu}}{\kappa}T^f}\|f^n\|_{L^{\infty}_q}\sum_j\frac{(\triangle v)^3}{(1+|v_j|)^{q-1}}\cr
&\leq 2C_{q-1}\{C_{\alpha}C_{0,1}\}^{-1}e^{\left(\frac{1}{\kappa}+\frac{(C_{\mathcal{M}}-1)}{\kappa+A_{\nu}\triangle t}\right)A_{\nu}T^f}\|f_0\|_{L^{\infty}_q}.
\end{align*}
The estimate for $\widetilde{T}^n_i$ follows from
\begin{align*}
3\widetilde{T}^n_i
&= \frac{1}{\widetilde{\rho}^n_{i}}\sum \tilde{f}^n_{i,j}|v_j|^2(\triangle v)^3-|\widetilde{U}^n_{i}|^2\cr
&\leq \frac{1}{\widetilde{\rho}^n_{i}}\sum \tilde{f}^n_{i,j}|v_j|^2(\triangle v)^3\cr
&\leq \frac{1}{\widetilde{\rho}^n_{i}}\|f^n\|_{L^{\infty}_q}\sum_j\frac{(\triangle v)^3}{(1+|v^j|)^{q-2}}\cr
&\leq 2C_{q-2}\{C_{0,1}C_{\alpha}\}^{-1}e^{\left(\frac{1}{\kappa}+\frac{(C_{\mathcal{M}}-1)}{\kappa+A_{\nu}\triangle t}\right)A_{\nu}T^f}\|f^0\|_{L^{\infty}_q}
\end{align*}
by a similar manner.
\end{proof}
\subsection{Proof of Theorem \ref{stability theorem}}  Due to Lemma \ref{strict positiveness}$, E_0$ holds.
Assume $E^{n-1}$ is satisfied. Then Lemma \ref{LB f prop}, \ref{f stability}, \ref{LB T Lemma} and \ref{UB rho Lemma} respectively show that
$f^n_{i,j}$ satisfies $A^n$, $B^n$,$C^n$, $D^n$, that is $E^n$. Therefore,
we can conclude that $f^n_{i,j}$ satisfies $E^n$ for all $n\geq0$ by induction.
%
%
%
%
%
%
\section{Consistent form}
In this section, we transform the ES-BGK model (\ref{ESBGK}) into a form which is consistent to our scheme (\ref{main scheme}). We use the following notation for continuous solutions:
\[
\widetilde f(x,v,t)=f(x-v_1\triangle t,v,t).
\]
\begin{theorem}
Under the assumption of Theorem \ref{Existence_Theorem}, (\ref{ESBGK}) can be represented in the following form:
\begin{align}\label{Consist3}
f(x,v,t+\triangle t)&=\frac{\kappa}{\kappa+A_{\nu}\triangle t}\widetilde{f}(x,v,t)
+\frac{A_{\nu}\triangle t}{\kappa+A_{\nu}\triangle t}{\mathcal M}_{\tilde{\nu}}(\widetilde{f})(x,v,t)\cr
&+\frac{A_{\nu}}{\kappa+A_{\nu}\triangle t}\big\{R_{1}+R_2\big\},
\end{align}
where
\begin{align*}
R_1&=\int^{t+\triangle t}_t\Big\{{\mathcal M}_{\nu}(f)(x,v,t)-\mathcal{M}_{\widetilde{\nu}}(\tilde{f})(x,v,t)\Big\}ds\cr
&-\int^{t+\triangle t}_t\big\{(t+\triangle t-s)\partial_x\mathcal{M}(x_{\theta_1},v,t_{\theta_1})ds
+(s-t)\partial_t\mathcal{M}_{\nu}(f)(x_{\theta_1},v,t_{\theta_1})\big\}ds,\cr
R_2&=\int^{t+\triangle t}_t(s-t-\triangle t)\big(\mathcal{M}(f)-f\big)(x_{\theta_2},v,t_{\theta_2})ds
\end{align*}
for some $(x_{\theta_i},v,t_{\theta_i})$ $(i=1,2)$.
\end{theorem}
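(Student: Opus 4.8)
The plan is to reproduce, at the continuous level, the implicit Euler discretization and its semi-explicit conversion carried out in Section 2, with every approximation step replaced by an exact Taylor/mean-value remainder. First I would integrate the characteristic formulation (\ref{CF}) exactly. Fixing $(x,v)$ and following the characteristic $x(s)=x-v_1(t+\triangle t-s)$ through $(x,t+\triangle t)$, so that $x(t)=x-v_1\triangle t$ and hence $\widetilde f(x,v,t)=f(x(t),v,t)$, the ODE $\frac{d}{ds}f(x(s),v,s)=\frac{A_\nu}{\kappa}(\mathcal{M}_\nu(f)-f)(x(s),v,s)$ integrates to the exact identity
\[
f(x,v,t+\triangle t)-\widetilde f(x,v,t)=\frac{A_\nu}{\kappa}\int_t^{t+\triangle t}\big(\mathcal{M}_\nu(f)-f\big)(x(s),v,s)\,ds.
\]

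Second, I would apply quadrature separately to the loss and gain integrals, letting the errors define $R_1$ and $R_2$. For the loss term, a first-order Taylor expansion of $f(x(s),v,s)$ about the right endpoint $s=t+\triangle t$, with the material derivative evaluated through the characteristic ODE as $\frac{A_\nu}{\kappa}(\mathcal{M}_\nu(f)-f)$, gives the right-endpoint (implicit Euler) rule $\int_t^{t+\triangle t}f(x(s),v,s)\,ds=\triangle t\,f(x,v,t+\triangle t)-R_2$, where $R_2$ is the resulting second-order remainder carrying the weight $(s-t-\triangle t)$ and evaluated at a mean-value point $(x_{\theta_2},v,t_{\theta_2})$ (the harmless constant factor coming from the ODE substitution does not affect the later bounds). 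For the gain term I would proceed in two steps: (i) a first-order mean-value expansion of $\mathcal{M}_\nu(f)(x(s),v,s)$ about the base point $(x,v,t)$, which peels off $\triangle t\,\mathcal{M}_\nu(f)(x,v,t)$ and leaves the remainder with weights $(t+\triangle t-s)$ on the $\partial_x$ piece (recalling $x(s)-x=-v_1(t+\triangle t-s)$) and $(s-t)$ on the $\partial_t$ piece, evaluated at $(x_{\theta_1},v,t_{\theta_1})$; and (ii) the semi-explicit replacement $\mathcal{M}_\nu(f)(x,v,t)\to\mathcal{M}_{\tilde\nu}(\widetilde f)(x,v,t)$, whose discrepancy $\triangle t\{\mathcal{M}_\nu(f)-\mathcal{M}_{\tilde\nu}(\widetilde f)\}(x,v,t)$ supplies the first term of $R_1$. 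Together these give $\int_t^{t+\triangle t}\mathcal{M}_\nu(f)(x(s),v,s)\,ds=\triangle t\,\mathcal{M}_{\tilde\nu}(\widetilde f)(x,v,t)+R_1$.

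Third, I would substitute both quadrature identities back and isolate the implicit term, reaching
\[
f(x,v,t+\triangle t)=\widetilde f+\frac{A_\nu}{\kappa}\Big[\triangle t\,\mathcal{M}_{\tilde\nu}(\widetilde f)-\triangle t\,f(x,v,t+\triangle t)+R_1+R_2\Big].
\]
The term $-\frac{A_\nu\triangle t}{\kappa}f(x,v,t+\triangle t)$ produced by the right-endpoint rule sits on the right exactly as in the implicit scheme; transferring it to the left yields the common factor $\frac{\kappa+A_\nu\triangle t}{\kappa}$, and division converts the implicit relation into the convex combination with coefficients $\frac{\kappa}{\kappa+A_\nu\triangle t}$ and $\frac{A_\nu\triangle t}{\kappa+A_\nu\triangle t}$, while the remainder is multiplied by $\frac{A_\nu}{\kappa}\big/\frac{\kappa+A_\nu\triangle t}{\kappa}=\frac{A_\nu}{\kappa+A_\nu\triangle t}$. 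This is precisely (\ref{Consist3}), and the algebraic isolation is the continuous counterpart of the implicit-to-explicit conversion of Section 2.

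I expect the main obstacle to be the precise bookkeeping of the gain-term remainder so that it assembles into exactly the stated $R_1$: identifying the intermediate point $(x_{\theta_1},v,t_{\theta_1})$ and matching the weights $(t+\triangle t-s)$ and $(s-t)$ against the spatial offset $-v_1(t+\triangle t-s)$ and the temporal offset $s-t$ (the factor $v_1$ and the signs being absorbed into the notation $\partial_x\mathcal{M}$). A secondary point is to confirm that the regularity furnished by Theorem \ref{Existence_Theorem} --- boundedness of $f$ in $\|\cdot\|_{W^{2,\infty}_q}$ together with the strict positivity of $\rho$ and $\mathcal{T}_\nu$ --- guarantees that $\mathcal{M}_\nu(f)$ is smooth enough in $(x,t)$ for these first-order expansions to hold. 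The loss-term remainder $R_2$ and the final rearrangement are then routine.
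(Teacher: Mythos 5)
Your proposal is correct and follows essentially the same route as the paper: integrate the equation exactly along the characteristic through $(x,t+\triangle t)$, Taylor-expand the gain term $\mathcal{M}_{\nu}(f)$ about $(x,v,t)$ with mean-value remainders and insert the discrepancy $\mathcal{M}_{\nu}(f)-\mathcal{M}_{\tilde\nu}(\widetilde f)$ to form $R_1$, expand the loss term about $(x,v,t+\triangle t)$ using the equation itself to rewrite the material derivative (producing $R_2$ and the implicit term $\triangle t\, f(x,v,t+\triangle t)$), and then solve the resulting implicit relation to obtain the convex-combination form with the factor $\frac{A_\nu}{\kappa+A_\nu\triangle t}$ on the remainders. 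The minor bookkeeping points you flag (the sign/constant absorbed into $R_2$ and the regularity of $\mathcal{M}_\nu(f)$ supplied by Theorem \ref{Existence_Theorem}) are handled the same way in the paper's proof.
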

\begin{proof}
Along the characteristic line, (\ref{ESBGK}) reads
\begin{align*}
\frac{d f}{d t}(x+v_1t,v,t) = \frac{1}{\kappa}A_{\nu}({\mathcal M}_{\nu}(f)-f)(x+v_1t,v,t).
\end{align*}
Integrating  on $[t,t+\triangle t]$, we get
\begin{align*}
f(x,v,t+\triangle t)
&=f(x-\triangle tv_1,v,t)+\frac{1}{\kappa}A_{\nu}
\int^{t+\triangle t}_{t}\!\!({\mathcal M}_{\nu}(f)-f)(x-(t+\triangle t-s)v_1,v,s)ds\cr
&\qquad\equiv f(x-\triangle tv_1,v,t)+ \frac{1}{\kappa}A_{\nu}(I_1-I_2).
\end{align*}
By Taylor's  theorem around  $(x-\triangle t v_1,v,t)$, we see that there exist
$x_{\theta_1}$ which lies between $x$ and $x-(t+\triangle t-s)$ and $t_{\theta_1}$ $\in$ $[s,t]$ such that
\begin{align*}
&{\mathcal M}_{\nu}(f)(x-(t+\triangle t-s)v_1,v,s)\cr
&\qquad={\mathcal M}_{\nu}(f)(x,v,t)
-(t+\triangle t-s)\partial_x\mathcal{M}_{\nu}(f)(x_{\theta_1},v,t_{\theta_1})\cr
&\qquad+(s-t)\partial_t\mathcal{M}_{\nu}(f)(x_{\theta_1},v,t_{\theta_1})\cr
&\qquad=\mathcal{M}_{\tilde{\nu}}(\tilde{f})(x,v,t)+\big\{{\mathcal M}_{\nu}(f)(x,v,t)-\mathcal{M}_{\tilde{\nu}}(\tilde{f})(x,v,t)\big\}\cr
&\qquad-(t+\triangle t-s)\partial_x\mathcal{M}_{\nu}(f)(x_{\theta_1},v,t_{\theta_1})\cr
&\qquad+(s-t)\partial_t\mathcal{M}_{\nu}(f)(x_{\theta_1},v,t_{\theta_1}).
\end{align*}
Therefore, we have
\begin{equation}\label{IM}
I_1=\triangle t{\mathcal M}_{\widetilde{\nu}}(x-\triangle tv_1,v,t)+R_1.
\end{equation}
On the other hand, by Taylor expansion around $(x,v,t+\triangle t)$, we get
\begin{align*}
&f(x-(t+\triangle t-s)v,v,s)\cr
&\qquad=f(x,v,t+\triangle t)
+(s-t-\triangle t)\big(\partial_t+v\cdot\nabla_x\big)f(x_{\theta_2},v,t_{\theta_2})\cr
&\qquad=f(x,v,t+\triangle t)
+\frac{1}{\kappa}(s-t-\triangle t)\left\{\mathcal{M}_{\nu}(f)-f\right\}(x_{\theta_2},v,t_{\theta_2})
\end{align*}
for some $x_{\theta_2}$ lies between $x$ and $x-(t+\triangle t-s)$ and $t_{\theta_2}$ $\in$ $[s,t]$.
Therefore, $I_2$ can be rewritten as
\begin{align}\label{If}
I_2=\triangle t f(x,v,t+\triangle t)+R_2.
\end{align}
Substituting (\ref{IM}) and (\ref{If}) into (\ref{Consist3}), we get
\begin{align*}
f(x,v,t+\triangle t)=\widetilde f(x,v,t)+\frac{A_{\nu}\triangle t}{\kappa}\mathcal{M}_{\tilde{\nu}}(\tilde{f})(x,v,t)-
\frac{A_{\nu}\triangle t}{\kappa} f(x,v,t+\triangle t)
+\frac{A_{\nu}}{\kappa}\big\{R_1-R_2\big\}.
\end{align*}
We then collect relevant terms to derive the desired result.
\end{proof}
We now estimate the remainder terms. First, we need the following estimates.
%
%
%
%
\begin{lemma}\label{ddtM1}\emph{\cite{Yun2}} Let $f$ be solution in Theorem \ref{Existence_Theorem} corresponding to the initial data $f_0$.
Then we have for $q\geq 5$
\begin{align*}
\|\mathcal{M}_{\nu}(f)-\mathcal{M}_{\nu}(g)\|_{L^{\infty}_q}&\leq C_q\|f-g\|_{L^{\infty}_q},\cr
\sum_{0\leq |\alpha|+|\beta|\leq 1}\|\partial^{\alpha}_{\beta}\mathcal{M}_{\nu}(f)\|_{L^{\infty}}&\leq C_{T^f}\big\{\|f\|_{W^{1,\infty}_q}+1\big\}.
\end{align*}
\end{lemma}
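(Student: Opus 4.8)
The statement to prove (Lemma \ref{ddtM1}) has two parts: a Lipschitz-type bound
\[
\|\mathcal{M}_{\nu}(f)-\mathcal{M}_{\nu}(g)\|_{L^{\infty}_q}\leq C_q\|f-g\|_{L^{\infty}_q},
\]
and a derivative bound
\[
\sum_{0\leq |\alpha|+|\beta|\leq 1}\|\partial^{\alpha}_{\beta}\mathcal{M}_{\nu}(f)\|_{L^{\infty}}\leq C_{T^f}\big\{\|f\|_{W^{1,\infty}_q}+1\big\}.
\]

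\textbf{Overall approach.} The plan is to exploit the fact that $\mathcal{M}_{\nu}(f)$ depends on $f$ only through the five macroscopic fields $\rho, U, \mathcal{T}_{\nu}$ (equivalently $\rho, U, T, \Theta$). Since these are moments of $f$ against polynomial weights, and the weighted $L^{\infty}_q$ norm controls such moments for $q>5$, the key is to view $\mathcal{M}_{\nu}$ as a smooth function of $(\rho, U, \mathcal{T}_{\nu})$ and apply the chain rule together with the a priori bounds of Theorem \ref{Existence_Theorem}. First I would record that the macroscopic fields are smooth functionals of $f$: each field is an integral $\int f \,p(v)\,dv$ with $p$ polynomial, so differences and derivatives of the fields are controlled by the corresponding norm of $f$. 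The upper and lower bounds of Theorem \ref{Existence_Theorem}(2)—in particular $\rho \geq C>0$ and the strict positive-definiteness $k^{\top}\mathcal{T}_{\nu}k \geq C>0$—guarantee that we stay in a compact region of parameter space away from the degenerate locus where $\det\mathcal{T}_{\nu}\to 0$ or $\rho\to 0$, on which $\mathcal{M}_{\nu}$ and its $v$-derivatives are smooth and uniformly bounded.

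\textbf{Key steps for the first estimate.} For the Lipschitz bound I would write
\[
\mathcal{M}_{\nu}(f)-\mathcal{M}_{\nu}(g)=\int_0^1 \frac{d}{d\tau}\,\mathcal{M}_{\nu}\big((1-\tau)g+\tau f\big)\,d\tau,
\]
expressed through the gradient of $\mathcal{M}_{\nu}$ with respect to $(\rho,U,\mathcal{T}_{\nu})$ contracted with $(\rho_f-\rho_g,\,U_f-U_g,\,\mathcal{T}_{\nu,f}-\mathcal{T}_{\nu,g})$. Each macroscopic difference is bounded by $\|f-g\|_{L^{\infty}_q}$ using the moment control (integrating $|f-g|(1+|v|)^{-q}\cdot(1+|v|)^q\,p(v)$, convergent for $q>5$). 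The partial derivatives of $\mathcal{M}_{\nu}$ in the parameters decay like a Gaussian in $v$, so after multiplying by $(1+|v|)^q$ one takes the supremum in $v$, which is finite and uniformly bounded on the admissible parameter region. Collecting the factors yields the constant $C_q$. One must check that the intermediate states $(1-\tau)g+\tau f$ stay in the admissible region; this follows since the bounds of Theorem \ref{Existence_Theorem} are convex-stable (lower bounds on $\rho$ and on $\mathcal{T}_{\nu}$ are preserved under convex combination when both endpoints satisfy them).

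\textbf{Key steps for the second estimate and main obstacle.} For the derivative bound, the $v$-derivatives $\partial_v^{\beta}\mathcal{M}_{\nu}(f)$ are immediate: differentiating the Gaussian brings down polynomial factors in $(v-U)\mathcal{T}_{\nu}^{-1}$, and Gaussian decay absorbs them, giving a bound depending only on the (bounded) macroscopic fields. The $x$-derivatives are the crux: $\partial_x \mathcal{M}_{\nu}(f)$ equals the parameter-gradient of $\mathcal{M}_{\nu}$ contracted with $(\partial_x\rho,\partial_x U,\partial_x\mathcal{T}_{\nu})$, and each $\partial_x$-field is a moment of $\partial_x f$, hence controlled by $\|\partial_x f\|_{L^{\infty}_q}\leq \|f\|_{W^{1,\infty}_q}$. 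I expect the main obstacle to be bookkeeping the inverse temperature tensor: since $\mathcal{M}_{\nu}$ involves $\mathcal{T}_{\nu}^{-1}$ and $(\det\mathcal{T}_{\nu})^{-1/2}$, differentiating produces terms with $\mathcal{T}_{\nu}^{-1}(\partial\mathcal{T}_{\nu})\mathcal{T}_{\nu}^{-1}$ and derivatives of the determinant, and one must verify these remain uniformly bounded using \emph{both} the strict positive-definiteness lower bound (to control $\mathcal{T}_{\nu}^{-1}$ and $(\det\mathcal{T}_{\nu})^{-1/2}$) and the upper bounds on the fields (to control $\partial_x\mathcal{T}_{\nu}$). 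The additive $+1$ on the right-hand side accommodates the constant terms arising from the parameter-derivatives that do not scale with $\|f\|$. Since this lemma is cited from \cite{Yun2}, I would present the argument at the level of these reductions rather than carrying out every matrix-calculus identity explicitly.
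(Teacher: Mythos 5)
Your proposal is sound, but note that the paper itself offers no proof of Lemma \ref{ddtM1}: it is quoted verbatim from \cite{Yun2} and used as a black box. The closest thing to a proof in this paper is Proposition \ref{ContinuityofMaxwellianDiscrete} (together with Lemma \ref{rho-rho}), which establishes the discrete analogue, and your strategy coincides with it almost step for step: Taylor expansion of $\mathcal{M}_{\nu}$ in the macroscopic parameters, control of the field differences by $\|f-g\|_{L^{\infty}_q}$ via moment integrals, and uniform bounds on the parameter-derivatives of $\mathcal{M}_{\nu}$ coming from the upper bounds on the fields, the lower bound on $\rho$, the strict positive-definiteness of $\mathcal{T}_{\nu}$, and Gaussian decay in $v$. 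One point needs cleaning up: you write the difference as $\int_0^1 \frac{d}{d\tau}\mathcal{M}_{\nu}\bigl((1-\tau)g+\tau f\bigr)\,d\tau$ and simultaneously claim the integrand is the parameter-gradient contracted with $(\rho_f-\rho_g,\,U_f-U_g,\,\mathcal{T}_{\nu,f}-\mathcal{T}_{\nu,g})$; these are inconsistent, since $U$, $T$, $\Theta$ are nonlinear functionals of the distribution, so interpolating the distribution functions does not produce that contraction. The fix is what the paper does in Proposition \ref{ContinuityofMaxwellianDiscrete}: interpolate the macroscopic fields themselves, $(\rho_{\theta},U_{\theta},\mathcal{T}_{\theta})=(1-\theta)(\rho_g,U_g,\mathcal{T}_{\nu,g})+\theta(\rho_f,U_f,\mathcal{T}_{\nu,f})$. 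Then your convex-stability remark is exactly right (linear combinations preserve the lower bounds on $\rho$ and the positive-definiteness of $\mathcal{T}_{\nu}$), and the lower bound on $\det\mathcal{T}_{\theta}$ follows from the Brunn--Minkowski inequality $\det((1-\theta)A+\theta B)\geq (\det A)^{1-\theta}(\det B)^{\theta}$, as in (\ref{BM}). With that modification your outline of both estimates, including the treatment of $\mathcal{T}_{\nu}^{-1}$, $(\det\mathcal{T}_{\nu})^{-1/2}$ and the $x$-derivatives of the fields in the second bound, matches the intended argument.
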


%
%
%
\begin{lemma}\label{Remainder1}
$R_1$, $R_2$ satisfy the following estimate:
\begin{align}
\|R_1\|_{L^{\infty}_q}+\|R_2\|_{L^{\infty}_q}
\leq C_{T^f,f_0}(\triangle t)^2.
\end{align}
\end{lemma}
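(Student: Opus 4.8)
The plan is to bound $R_2$ and the two lines of $R_1$ separately, extracting one factor $\triangle t$ from an explicit scalar weight inside each time integral and a second factor from the length $\triangle t$ of the integration interval. The term $R_2$ and the second line of $R_1$ are the routine parts. On $[t,t+\triangle t]$ one has $|s-t-\triangle t|\le\triangle t$ and $|t+\triangle t-s|,|s-t|\le\triangle t$, so since the weight $(1+|v|)^q$ depends only on $v$ (hence evaluating at a Taylor point $x_{\theta_i}$ is free after taking the spatial supremum),
\[
\|R_2\|_{L^\infty_q}\le(\triangle t)^2\sup_{0\le\tau\le T^f}\|\mathcal M_\nu(f)(\tau)-f(\tau)\|_{L^\infty_q},
\]
and the second line of $R_1$ is likewise $\le(\triangle t)^2\sup_{0\le\tau\le T^f}(\|\partial_x\mathcal M_\nu(f)\|_{L^\infty_q}+\|\partial_t\mathcal M_\nu(f)\|_{L^\infty_q})$. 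All these suprema are finite: $f$ is bounded in $W^{2,\infty}_q\subset L^\infty_q$ on $[0,T^f]$ by Theorem \ref{Existence_Theorem}, while $\mathcal M_\nu(f)$ and its first derivatives lie in $L^\infty_q$ by Lemma \ref{ddtM1} together with the uniform lower bounds on $T$ and $\mathcal T_\nu$ of Theorem \ref{Existence_Theorem}, the Gaussian decay absorbing the polynomial weight and the extra polynomial factors in $v-U$ produced by differentiation.

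The genuine difficulty is the first line of $R_1$, equal to $\triangle t\,(\mathcal M_\nu(f)-\mathcal M_{\tilde\nu}(\tilde f))(x,v,t)$, for which I must prove $\|\mathcal M_\nu(f)-\mathcal M_{\tilde\nu}(\tilde f)\|_{L^\infty_q}\le C\triangle t$ so that the prefactor $\triangle t$ upgrades it to $(\triangle t)^2$. I would split
\[
\mathcal M_\nu(f)-\mathcal M_{\tilde\nu}(\tilde f)=\big(\mathcal M_\nu(f)-\mathcal M_{\tilde\nu}(f)\big)+\big(\mathcal M_{\tilde\nu}(f)-\mathcal M_{\tilde\nu}(\tilde f)\big).
\]
In the first bracket only the parameter changes: since $\nu-\tilde\nu=\nu\triangle t/(\kappa+\triangle t)=O(\triangle t)$ and $\mathcal T_\nu-\mathcal T_{\tilde\nu}=(\nu-\tilde\nu)(\Theta-T\,Id)$, the smooth dependence of the ellipsoidal Gaussian on $\mathcal T$ — legitimate precisely because $\mathcal T_\nu$ stays bounded below away from $0$ by Theorem \ref{Existence_Theorem} — gives an $O(\triangle t)$ bound in $L^\infty_q$. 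In the second bracket the parameter is fixed and only the state changes from $f$ to $\tilde f(x,v,t)=f(x-v_1\triangle t,v,t)$; because $\mathcal M_{\tilde\nu}$ sees its argument only through the macroscopic fields, I would reduce this to the field discrepancy estimate $|\rho-\tilde\rho|+|U-\tilde U|+|T-\tilde T|+\|\Theta-\tilde\Theta\|\le C\triangle t$, followed by the Lipschitz dependence of $\mathcal M_{\tilde\nu}$ on $(\rho,U,\mathcal T)$ in the weighted norm.

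These field discrepancies I would obtain by Taylor expanding the spatial shift, $\tilde f-f=-v_1\triangle t\,\partial_x f+O(\triangle t^2)$, and integrating against $1$, $v$, $|v-U|^2$ and $(v-U)\otimes(v-U)$: the leading contributions become $\triangle t$ times spatial derivatives of bounded moments (the momentum, the momentum flux, and so on), and the quadratic remainders are $O(\triangle t^2)$. I expect the main obstacle to be exactly this moment bookkeeping for $T$ and $\Theta$, since they require the heaviest velocity moments of $\partial_x f$; the argument therefore leans on the assumed decay $q$ and the regularity $f\in W^{2,\infty}_q$ to keep the relevant velocity integrals convergent and $O(\triangle t)$ uniformly in $x$ and on $[0,T^f]$. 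Collecting the parameter bracket, the state bracket and the two routine terms then yields $\|R_1\|_{L^\infty_q}+\|R_2\|_{L^\infty_q}\le C_{T^f,f_0}(\triangle t)^2$.
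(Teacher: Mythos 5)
Your proposal is correct, and its skeleton coincides with the paper's proof: $R_2$ and the Taylor-remainder line of $R_1$ are handled identically (one factor of $\triangle t$ from the scalar weight, one from the interval length, with Lemma \ref{ddtM1} and Theorem \ref{Existence_Theorem} supplying finiteness of the suprema), and the first line of $R_1$ is split into a parameter bracket plus a state bracket exactly as in the paper --- the paper writes $\{\mathcal M_{\tilde\nu}(\tilde f)-\mathcal M_{\nu}(\tilde f)\}+\{\mathcal M_{\nu}(\tilde f)-\mathcal M_{\nu}(f)\}$, yours is the symmetric variant, and the parameter bracket is then treated the same way in both (via $\nu-\tilde\nu=O(\triangle t)$, $\mathcal T_\nu-\mathcal T_{\tilde\nu}=(\nu-\tilde\nu)(\Theta-T\,Id)$, boundedness of $T\,Id-\Theta$ and of $\partial\mathcal M_\nu/\partial\mathcal T_\nu$, the last cited from \cite{Yun2}). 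Where you genuinely diverge is the state bracket: the paper disposes of it in two lines by quoting the Lipschitz estimate of Lemma \ref{ddtM1}, $\|\mathcal M_\nu(f)-\mathcal M_\nu(g)\|_{L^\infty_q}\le C_q\|f-g\|_{L^\infty_q}$, combined with the mean value theorem $\|f-\tilde f\|_{L^\infty_q}\le C\triangle t\,\|f\|_{W^{1,\infty}_{q+1}}$, whereas you re-derive the Lipschitz property from scratch by passing through the macroscopic fields and then using smooth dependence of the Gaussian on $(\rho,U,\mathcal T)$. That route is sound --- it is precisely the machinery the paper builds in Proposition \ref{ContinuityofMaxwellianDiscrete} for the discrete-versus-continuous comparison, where no citable lemma exists: one needs the upper and lower bounds of Theorem \ref{Existence_Theorem} on both field sets, their persistence along the interpolating segment (positive definiteness of the interpolated tensor, determinant control), and Gaussian decay to absorb the weight $(1+|v|)^q$. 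So your proof is longer but more self-contained, trading the external reference for work the paper does elsewhere anyway. One technical wrinkle you should make explicit: with only $f\in W^{2,\infty}_q$ and $q>5$, the second-moment integrals of $v_1\partial_x f$ arising in your field-discrepancy step behave like $\int|v|^{3}(1+|v|)^{-q}dv$, which converges only for $q>6$; you must place the derivative in the weight-$(q+1)$ norm --- exactly the quiet upgrade the paper itself makes when it invokes $\|f\|_{W^{1,\infty}_{q+1}}$ --- for the bookkeeping to close under the stated hypothesis $q>5$.
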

\begin{proof}
We start with $R_1$. We decompose
\begin{align*}
\mathcal{M}_{\tilde{\nu}}(\tilde{f})-\mathcal{M}_{\nu}(f)
& =\Big\{\mathcal{M}_{\tilde{\nu}}(\tilde{f})-\mathcal{M}_{\nu}(\tilde{f})\Big\}
+\Big\{\mathcal{M}_{\nu}(\tilde{f})-\mathcal{M}_{\nu}(f)\Big\}\cr
&\equiv I+II.
\end{align*}
For $I$, we compute
\begin{align*}
I=(\tilde{\nu}-\nu)\frac{\partial\mathcal{T}_{\nu}}{\partial \nu}\frac{\partial\mathcal{M}_{\nu}}{\partial\mathcal{T}_{\nu}}
=-\frac{\nu A_{\nu}\triangle t}{\kappa+A_{\nu}\triangle t}\big\{\widetilde{T}Id-\widetilde{\Theta}\big\}\frac{\partial\mathcal{M}_{\nu}}{\partial\mathcal{T}_{\nu}}.
\end{align*}
Then, since
\begin{align*}
|TId-\Theta|&\leq \frac{1}{\rho}\int_{\mathbb{R}^3}f\big||v-U|^2Id+(v-U)\otimes(v-U)\big|dv\cr
&\leq\frac{C}{\rho}\int_{\mathbb{R}^3}f|v-U|^2dv\cr
&=\frac{C}{\rho}\left\{\int_{\mathbb{R}^3}f|v|^2dv+\rho|U|^2\right\}\cr
&\leq C\rho^{-1}\|f\|_{L^{\infty}_q}+|U|^2,
\end{align*}
the lower and upper bound estimates of the macroscropic field given in Theorem \ref{Existence_Theorem} yield
$|TId-\Theta|\leq C_{T^f}$. On the other hand, it was derived  in \cite{Yun2} that
\begin{align*}
\Big\|\frac{\partial\mathcal{M}_{\nu}}{\partial\mathcal{T}_{\nu}}\Big\|_{L^{\infty}_q}\leq C_{T^f}.
\end{align*}
Therefore we can estimate $I$ as
\begin{align*}
\|I\|_{L^{\infty}_q}\leq C^{T^f\!\!,\kappa,\nu}\triangle t.
\end{align*}
For $II$, we first recall  Lemma \ref{ddtM1} to deduce
\[
\|II\|_{L^{\infty}_q}\leq C\|\tilde{f}-f\|_{L^{\infty}_q}.
\]
Then we apply the mean value theorem to estimate
\begin{align*}
\|f-\tilde f\|_{L^{\infty}_q}\leq \|\triangle t v\cdot \nabla_x f\|_{L^{\infty}_q}\leq C_q\|f\|_{W^{1,\infty}_{q+1}}\triangle t\leq
C_{T^f}\Big\{\|f_0\|_{W^{1,\infty}_{q+1}}+1\Big\}\triangle t
\end{align*}
to get
\[
\|II\|_{L^{\infty}_q}\leq C_{T^f}\Big\{\|f_0\|_{W^{1,\infty}_{q+1}}+1\Big\}\triangle t.
\]
We combine these estimates to obtain
\[
\|\mathcal{M}_{\tilde{\nu}}(\tilde{f})-\mathcal{M}_{\nu}(f)\|_{L^{\infty}_q}\leq C_{T^f,f_0}\triangle t,
\]
from which we can estimate
\begin{align*}
\Big\|\int^{t+\triangle t}_t \mathcal{M}_{\tilde{\nu}}(\tilde{f})-\mathcal{M}_{\nu}(f)ds\Big\|_{L^{\infty}_q}
\leq \int^{t+\triangle t}_t \big\|\mathcal{M}_{\tilde{\nu}}(\tilde{f})-\mathcal{M}_{\nu}(f)\big\|_{L^{\infty}_q}ds
\leq C_{T^f,f_0}(\triangle t)^2.
\end{align*}
On the other hand, again from Lemma \ref{ddtM1},
\begin{align*}
\Big\|\int^{t+\triangle t}_t(t+\triangle t-s)\partial_x\mathcal{M}_{\nu}(f)ds\Big\|_{L^{\infty}_q}
\leq\|\partial_x\mathcal{M}_{\nu}(f)\|_{L^{\infty}_q}\int^{t+\triangle t}_t(t+\triangle t-s)ds
\leq C_{f_0,q}(\triangle t)^2,
\end{align*}
and \begin{align*}
\Big\|\int^{t+\triangle t}_t(s-t)\partial_t\mathcal{M}_{\nu}(f)ds\Big\|_{L^{\infty}_q}
\leq\|\partial_t\mathcal{M}_{\nu}(f)\|_{L^{\infty}_q}\int^{t+\triangle t}_t(s-t)ds
\leq C_{f_0,q}(\triangle t)^2.
\end{align*}
This gives the desired remainder estimate for $R_{1}$. We compute $R_{2}$ similarly as
\begin{align*}
&\Big\|\int^{t+\triangle t}_t(s-t-\triangle t)\big(\mathcal{M}_{\nu}(f)-f\big)(x_{\theta_2},v,t_{\theta_2})ds\Big\|_{L^{\infty}_q}\cr
&\hspace{2cm}\leq\|\mathcal{M}_{\nu}(f)-f\|_{L^{\infty}_q}\int^{t+\triangle t}_t(s-t-\triangle t)ds\cr
&\hspace{2cm}\leq C_{T^f}\|f^0\|_{L^{\infty}_q}(\triangle t)^2.
\end{align*}
\end{proof}
%
%
%
%
\section{ Estimate of $\mathcal{M}_{\widetilde{\nu}}(\tilde{f}(t_n))-\mathcal{M}_{\widetilde{\nu}}(\tilde{f}^n)$}
The main purpose of this section is to establish the continuity estimate of the discrete ellipsoidal Gaussian. (See Proposition \ref{ContinuityofMaxwellianDiscrete}).
%
%
%
%
\begin{lemma}\label{Additional_Lemma1} Assume $q>5$ and $\triangle v<r_{\triangle v}$. Let $f$ and $f^n$ denote the solution of (\ref{ESBGK}) and (\ref{main scheme}) respectively. Then,
the difference of $\tilde{f}^n$ and $\tilde{f}(t_n)$ satisfies
\begin{align*}
\|\tilde{f}^n-\tilde{f}(t_n)\|_{L^{\infty}_q}\leq \|f^n-f(t_n)\|_{L^{\infty}_q}+C_{T^f}\|f_0\|_{W^{1,\infty}_q}(\triangle x)^2.
\end{align*}
\end{lemma}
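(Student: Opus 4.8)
The plan is to read $\widetilde f^n_{i,j}$ as the value at the foot point $x(i,j)=x_i-\triangle t\,v_{j_1}$ of the piecewise linear interpolant of the discrete nodal data $\{f^n_{s,j}\}$, and to compare it against $\widetilde f(x_i,v_j,t_n)=f(x(i,j),v_j,t_n)$, the exact solution evaluated at the same foot point. Denoting by $\Pi$ the piecewise linear interpolation operator in $x$ (at fixed $v_j$), I would insert the interpolant $\Pi f(t_n)$ of the exact nodal values $f(x_s,v_j,t_n),\,f(x_{s+1},v_j,t_n)$ and split
\[
\widetilde f^n_{i,j}-f(x(i,j),v_j,t_n)=\underbrace{\big[\Pi f^n-\Pi f(t_n)\big](x(i,j),v_j)}_{A}+\underbrace{\big[\Pi f(t_n)-f(t_n)\big](x(i,j),v_j)}_{B},
\]
so that $A$ carries the nodal discretization error and $B$ is the pure interpolation error of the smooth solution. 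Note that the left-hand side is precisely the entrywise quantity whose weighted supremum defines $\|\widetilde f^n-\widetilde f(t_n)\|_{L^\infty_q}$.

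For the term $A$, since $x(i,j)\in[x_s,x_{s+1})$ the two interpolation weights are nonnegative and sum to $1$, while the velocity index $j$, and hence the factor $(1+|v_j|)^q$, is common to the two grid points entering the reconstruction. Writing $A$ as the convex combination of the nodal errors $f^n_{s',j}-f(x_{s'},v_j,t_n)$, $s'\in\{s,s+1\}$, and bounding it by the larger of the two, exactly as in the proof of Lemma \ref{f tilde{f}}, gives
\[
(1+|v_j|)^q|A|\leq\|f^n-f(t_n)\|_{L^\infty_q},
\]
which supplies the first term on the right-hand side.

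The heart of the matter is the term $B$, where the second-order rate in $\triangle x$ is produced. For fixed $v_j$ and $t_n$ the map $x\mapsto f(x,v_j,t_n)$ lies in $W^{2,\infty}$ in $x$ by Theorem \ref{Existence_Theorem}, so the Peano-kernel estimate (equivalently, the $C^2$ Taylor remainder) for linear interpolation on an interval of length $\triangle x$ gives $|B|\leq\tfrac{1}{8}(\triangle x)^2\sup_x|\partial_{xx}f(x,v_j,t_n)|$. Multiplying by $(1+|v_j|)^q$, taking the supremum over $i,j$, and using $\|\partial_{xx}f(t_n)\|_{L^\infty_q}\leq\|f(t_n)\|_{W^{2,\infty}_q}$, I would invoke the regularity bound of Theorem \ref{Existence_Theorem},
\[
\|f(t_n)\|_{W^{2,\infty}_q}\leq C_1e^{C_2t_n}\big\{\|f_0\|_{W^{1,\infty}_q}+1\big\}\leq C_{T^f}\|f_0\|_{W^{1,\infty}_q},
\]
where the additive constant is absorbed using that $\|f_0\|_{W^{1,\infty}_q}$ is bounded below away from zero (the datum is bounded below by hypothesis). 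This yields $\|B\|_{L^\infty_q}\leq C_{T^f}\|f_0\|_{W^{1,\infty}_q}(\triangle x)^2$, and combining with the bound on $A$ proves the claim for $n\geq1$; the case $n=0$ is trivial since $\widetilde f^0_{i,j}=f_0(x_i-v_{j_1}\triangle t,v_j)$ is sampled without interpolation, so $A=B=0$.

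The one point genuinely requiring care, which is also the reason the hypothesis $\|f_0\|_{W^{2,\infty}_q}<\infty$ is imposed, is that the $(\triangle x)^2$ rate in $B$ rests entirely on the uniform-in-time $W^{2,\infty}_q$ control of the exact solution furnished by Theorem \ref{Existence_Theorem}; with only $W^{1,\infty}$ regularity one would recover merely the linear rate of the earlier $L^1$ analysis in \cite{RSY}. The velocity weight causes no trouble, because the reconstruction acts in $x$ alone while $(1+|v_j|)^q$ depends on $v$ alone, so it passes cleanly through both $A$ and $B$.
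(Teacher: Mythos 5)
Your proof is correct and follows essentially the same route as the paper: the paper's Taylor expansion of $f(x_s,v_j,t_n)$ and $f(x_{s+1},v_j,t_n)$ around the foot point, with the first-order terms cancelling by the choice of the weights $a_{j_1}$, is exactly an inline proof of the second-order linear interpolation error estimate you quote for your term $B$, and your term $A$ is the same convex combination of nodal errors the paper bounds by $\|f^n-f(t_n)\|_{L^{\infty}_q}$. The only cosmetic differences are that you package the argument through the interpolation operator $\Pi$ and handle the $n=0$ sampling case and the absorption of the additive constant from Theorem \ref{Existence_Theorem} explicitly, which the paper leaves implicit.
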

\begin{proof}
(1) We expand  $\tilde{f}(x_i,v,t_n)=f(x_i-\triangle t v,v,t_n)$ in the following two ways using Taylor's theorem:
\begin{align*}
f(x_{s},v_j,t_n)&=\tilde{f}(x_i,v_j,t_n)+(x_s-x_i+v_{j_1}\triangle t)\widetilde{\partial_xf}(x_i,v_j,t_n)+\frac{1}{2}(x_s-x_i+v_{j_1}\triangle t)^2\widetilde{\partial^2_xf}(x_{\theta_1},v_j,t_n),\cr
f(x_{s+1},v_j,t_n)&=\tilde{f}(x_i,v_j,t_n)+(x_{s+1}-x_i+v_{j_1}\triangle t)\widetilde{\partial_xf}(x_i,v_j,t_n)+\frac{1}{2}(x_{s+1}-x_i+v_{j_1}\triangle t)^2
\widetilde{\partial^2_xf}(x_{\theta_1},v_j,t_n),
\end{align*}
for some $x_s< x_{\theta_1}< x_i+v_{j_1}\triangle t$ and $x_{i}-v_{j_1}\triangle t< x_{\theta_2}< x_{s+1}$. Making a convex combination of above two identities, we get
(for the definition of $a_{j_1}$, see Lemma \ref{a}.)
\begin{align*}
\tilde{f}(x_i,v,t_n)=(1-a_{j_1})f(x_{s},v_j,t_n)+a_{j_1}f(x_{s+1},v_j,t_n)+R_{\theta}\cr
\end{align*}
where $R_{\theta}$ is given by
\begin{align}\label{R0}
R_{\theta}=\frac{1}{2}(x_{s+1}-x_i+v_{j_1}\triangle t)^2
\widetilde{\partial^2_xf}(x_{\theta_1},v_j,t_n)+\frac{1}{2}(x_{s+1}-x_i+v_{j_1}\triangle t)^2\widetilde{\partial^2_xf}(x_{\theta_2},v_j,t_n).
\end{align}
Note that the first order derivitives cancelled each other due to the definition of $a_{j_1}$:
\begin{align}\label{aj1}
(x_s-x_i+v_{j_1}\triangle t)=-a_{j_1}\triangle x,\quad (x_{s+1}-x_i+v_{j_1}\triangle t)=(1-a_{j_1})\triangle x.
\end{align}
We then use
\begin{align}\label{together}
|x_s-(x_i-v_{j_1}\triangle t)|,~|x_{s+1}-(x_i-v_{j_1}\triangle t)|\leq \triangle x
\end{align}
together with
\begin{align*}
\displaystyle|\widetilde{\partial^2_xf}(x_{\theta_i},v_j,t_n)|, &\leq \frac{\|f\|_{W^{2,\infty}_q}}{(1+|v_j|)^q}
\leq C_{T^f}\frac{\|f_0\|_{W^{2,\infty}_q}+1}{(1+|v_j|)^q},
\end{align*}
to estimate $R_{\theta}$ as
\begin{align}\label{R00}
|R_{\theta}|
\leq C_{T^f}\frac{\|f_0\|_{W^{2,\infty}_q}}{(1+|v_j|)^q}(\triangle x)^2.
\end{align}

With these estimates, we can compute the difference of $\widetilde{f}^n_{i,j}$ and $\widetilde{f}(x_i,v,t_n)$
 as follows:
\begin{align*}
|\widetilde{f}^n_{i,j}-\widetilde{f}(x_i,v_j,t_n)|
&\leq|(1-a_{j_1})f^n_{s,j}+a_{j_1}f^n_{s+1,j}-(1-a_{j_1})f(x_{s},v_j,t_n)-a_{j_1}f(x_{s+1},v_j,t_n)|+|R_{\theta}|\cr
&\leq(1-a_{j_1})|f^n_{s,j}-f(x_{s},v_j,t_n)|+a_{j_1}|f^n_{s+1,j}-f(x_{s+1},v_j,t_n)|+|R_{\theta}|.
\end{align*}
We then multiply $(1+|v_j|)^q$ on both sides and take supremum over $i,j$.
The desired result follows from (\ref{R0}).
\end{proof}
%
%
%
%
\begin{lemma}\label{Additional_Lemma2} Assume $q>5$ and $|\triangle v|<r_{\triangle v}$. Let $f$ and $f^n$ denote the solution of (\ref{ESBGK}) and (\ref{main scheme}) respectively. Let  $\phi(v)$ denote one of  $1,v,|v|^2, (1-\nu)|v|^2Id+\nu v\otimes v$ for $v\in \mathbb{R}^3$.
Then we have
\begin{align*}
&\Big|\,\sum_j\widetilde{f}^n_{i,j}\phi(v_j)(\triangle v)^3-\int_{\mathbb{R}^3}\widetilde{f}(x_i,v,t_n)\phi(v)dv\Big|\cr
&\qquad\leq C_{T_f}\|f^n-f(t_n)\|_{L^{\infty}_q}+C_{T^f}\|f_0\|_{W^{2,\infty}_q}
\big\{(\triangle x)^2+\triangle v+\triangle v\triangle t\big\}.
\end{align*}
\end{lemma}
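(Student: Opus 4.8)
The plan is to insert the Riemann sum of the \emph{continuous} reconstructed solution as an intermediary, splitting the error into a ``distribution difference'' part and a ``quadrature'' part. Writing $Q_j$ for the velocity cell of side $\triangle v$ attached to the node $v_j$, I would estimate
\begin{align*}
&\Big|\sum_j \widetilde f^n_{i,j}\phi(v_j)(\triangle v)^3 - \int_{\mathbb{R}^3}\widetilde f(x_i,v,t_n)\phi(v)\,dv\Big| \leq A + B,
\end{align*}
where
\begin{align*}
A &= \sum_j \big|\widetilde f^n_{i,j}-\widetilde f(x_i,v_j,t_n)\big|\,|\phi(v_j)|(\triangle v)^3,\\
B &= \Big|\sum_j \widetilde f(x_i,v_j,t_n)\phi(v_j)(\triangle v)^3 - \int_{\mathbb{R}^3}\widetilde f(x_i,v,t_n)\phi(v)\,dv\Big|.
\end{align*}

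For the term $A$, I would use that every admissible $\phi$ satisfies $|\phi(v)|\le C(1+|v|)^2$, together with the weighted pointwise bound $|\widetilde f^n_{i,j}-\widetilde f(x_i,v_j,t_n)|\le \|\widetilde f^n-\widetilde f(t_n)\|_{L^\infty_q}(1+|v_j|)^{-q}$. This gives
\begin{align*}
A \le C\|\widetilde f^n-\widetilde f(t_n)\|_{L^\infty_q}\sum_j\frac{(\triangle v)^3}{(1+|v_j|)^{q-2}} \le 2CC_{q-2}\|\widetilde f^n-\widetilde f(t_n)\|_{L^\infty_q},
\end{align*}
where the last step uses $q>5$ (so $q-2>3$) and the Riemann-sum calibration of Theorem \ref{stability theorem}. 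Feeding in Lemma \ref{Additional_Lemma1} converts this into the announced $C_{T^f}\|f^n-f(t_n)\|_{L^\infty_q}+C_{T^f}\|f_0\|_{W^{2,\infty}_q}(\triangle x)^2$.

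The core of the argument is the quadrature term $B$. Writing both the integral and the sum as $\sum_j\int_{Q_j}$, the mean value theorem on each cell bounds $B$ by $C\triangle v\sum_j(\triangle v)^3\sup_{Q_j}|\nabla_v(\widetilde f\,\phi)|$. The key computation is the chain rule through the transport shift in $\widetilde f(x_i,v,t_n)=f(x_i-v_1\triangle t,v,t_n)$:
\begin{align*}
\partial_{v_1}\widetilde f = -\triangle t\,\widetilde{\partial_x f}+\widetilde{\partial_{v_1}f},\qquad \partial_{v_k}\widetilde f=\widetilde{\partial_{v_k}f}\ (k=2,3),
\end{align*}
so that $|\nabla_v(\widetilde f\,\phi)|\le (\triangle t|\widetilde{\partial_x f}|+|\widetilde{\nabla_v f}|)|\phi|+|\widetilde f|\,|\nabla_v\phi|$. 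Using the weighted $W^{1,\infty}_q$ bounds on $f$ from Theorem \ref{Existence_Theorem} and $|\phi|\le C(1+|v|)^2$, $|\nabla_v\phi|\le C(1+|v|)$, each term decays like $(1+|v_j|)^{2-q}$ or faster, so the Riemann-sum bound of Theorem \ref{stability theorem} gives $B\le C_{T^f}\|f_0\|_{W^{2,\infty}_q}(\triangle v+\triangle v\triangle t)$, the $\triangle t$ factor coming precisely from the $\partial_x$ contribution. Combining the bounds on $A$ and $B$ yields the claim.

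The main obstacle I anticipate is the bookkeeping in $B$: one must differentiate $\widetilde f$ in $v$ correctly through the shift $x_i-v_1\triangle t$ — this is the mechanism that produces the mixed term $\triangle v\triangle t$ rather than a clean $\triangle v$ — and one must verify that every resulting velocity sum converges. The latter forces the hypothesis $q>5$ (guaranteeing $q-2>3$, hence integrability of $(1+|v|)^{2-q}$ against the highest-growth moment $\phi(v)\sim|v|^2$) and the calibration of discrete sums against their integrals from Theorem \ref{stability theorem}. Apart from this, the estimate is a routine first-order quadrature argument, and the regularity $\|f_0\|_{W^{2,\infty}_q}$, propagated to $f$ by Theorem \ref{Existence_Theorem}, is more than sufficient.
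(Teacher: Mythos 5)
Your proof is correct, but it takes a genuinely different route from the paper's. The paper splits the error by freezing $\phi$ at the nodes while keeping the cell integrals of $\widetilde f(x_i,\cdot,t_n)$: its term $I$ compares $\sum_j\widetilde f^n_{i,j}\phi(v_j)(\triangle v)^3$ with $\sum_j\int_{\triangle_j}\widetilde f(x_i,v,t_n)\phi(v_j)\,dv$, and is estimated by re-running the two-point Taylor expansion from the proof of Lemma \ref{Additional_Lemma1}, but now expanded at the continuous velocity $v\in\triangle_j$, so that the node-wise difference $\|f^n-f(t_n)\|_{L^{\infty}_q}$, the $(\triangle x)^2$ interpolation error, and the velocity-offset terms $\triangle v$, $\triangle v\triangle t$ all emerge from a single remainder $R_{\theta}$ (the mixed term appearing there explicitly as $-(v^1-v_{j_1})\triangle t\,\widetilde{\partial_xf}$); its term $II$ then handles $\phi(v_j)-\phi(v)$. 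You instead insert the nodal Riemann sum $\sum_j\widetilde f(x_i,v_j,t_n)\phi(v_j)(\triangle v)^3$, which cleanly separates the error into a pure node-difference part $A$, disposed of by citing Lemma \ref{Additional_Lemma1} as a black box, and a pure quadrature part $B$, in which the mixed term $\triangle v\triangle t$ arises from the chain rule $\partial_{v^1}\widetilde f=-\triangle t\,\widetilde{\partial_xf}+\widetilde{\partial_{v^1}f}$ through the shift $x_i-v^1\triangle t$ --- the same mechanism as in the paper, just surfacing in a different place. Your organization is more modular and shorter, since it does not repeat the Taylor expansion already established, and your quadrature step needs only first-order ($W^{1,\infty}_q$) control of $f(t_n)$, the $W^{2,\infty}_q$ regularity entering solely through Lemma \ref{Additional_Lemma1}; the paper's self-contained expansion, by contrast, uses second $x$-derivatives inside this lemma as well, but keeps all error constants traceable to one computation without invoking the preceding lemma. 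Both arguments use $q>5$ and the $\triangle v<r_{\triangle v}$ calibration of Theorem \ref{stability theorem} in the same way to sum the weighted tails, and both deliver the stated bound.
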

\begin{proof}
 For simplicity, we define
\[
\triangle_j=[v_{j_1},v_{j_1+1}]\times[v_{j_2},v_{j_2+1}]\times [v_{j_3},v_{j_3+1}],
\]
so that
\begin{align*}
\int_{\mathbb{R}^3}\widetilde{f}(x_i,v,t_n)\phi(v)dv=\sum_j\int_{\triangle_j}\widetilde{f}(x_i,v,t_n)\phi(v)dv.
\end{align*}
Therefore,
\begin{align*}
&\sum_j\widetilde{f}^n_{i,j}\phi(v_j)(\triangle v)^3-\int_{\mathbb{R}^3}\widetilde{f}(x_i,v,t_n)\phi(v)dv\cr
&\hspace{1.2cm}=\sum_j\widetilde{f}^n_{i,j}\phi(v_j)(\triangle v)^3-\sum_j\int_{\triangle_j}\widetilde{f}(x_i,v,t_n)\phi(v)dv\cr
&\hspace{1.2cm}=\Big\{\sum_j\widetilde{f}^n_{i,j}\phi(v_j)(\triangle v)^3-\sum_j\int_{\triangle_j}\widetilde{f}(x_i,v,t_n)\phi(v_j)dv\Big\}\cr
&\hspace{1.2cm}+\sum_j\int_{\triangle_j}\widetilde{f}(x_i,v,t_n)\big\{\phi(v_j)-\phi(v)\big\}dv\cr
&\hspace{1.2cm}\equiv I+II.
\end{align*}

$\bullet$ {\bf(The estimate of $I$)}:
Assume $v\in\triangle_j$ and expand $\tilde{f}(x_i,v,t_n)=f(x_i- v^1\triangle t,v,t_n)$ in the following two ways:

\begin{align*}
f(x_{s},v_j,t_n)&=\tilde{f}(x_i,v,t_n)+(x_s-x_i+v^1\triangle t)\widetilde{\partial_xf}(x_i,v,t_n)+\frac{1}{2}(x_s-x_i+v^1\triangle t)^2
\widetilde{\partial^2_xf}(z_{\theta_{s,1}})\cr
&+(v-v_{j})\cdot\nabla_v \tilde{f}(z_{\theta_{s,2}}),
\end{align*}
and
\begin{align*}
f(x_{s+1},v_j,t_n)&=\tilde{f}(x_i,v,t_n)+(x_{s+1}-x_i+v^1\triangle t)\widetilde{\partial_xf}(x_i,v,t_n)+\frac{1}{2}(x_{s+1}-x_i+v^1\triangle t)^2
\widetilde{\partial^2_xf}(z_{\theta_{s+1,1}})\cr
&+(v-v_{j})\cdot\widetilde{\nabla_v f}(z_{\theta_{s+1,2}}),
\end{align*}
where $z_{\theta_{s,i}}$ and $z_{\theta_{s+1,i}}$ $(i=1,2)$ denote the mean value points defined similarly as in the previous case.
We rewrite $f(x_{s},v_j,t_n)$ and $f(x_{s+1},v_j,t_n)$ as
\begin{align*}
f(x_{s},v_j,t_n)&=\tilde{f}(x_i,v,t_n)+(x_s-x_i+v_{j_1}\triangle t)\widetilde{\partial_xf}(x_i,v,t_n)
+\frac{1}{2}(x_s-x_i+v^1\triangle t)^2\widetilde{\partial^2_xf}(z_{\theta_{s,1}})\cr
&+(v^1-v_{j_1})\triangle t\widetilde{\partial_xf}(x_i,v,t_n)+(v-v_{j})\cdot\widetilde{\nabla_v f}(z_{\theta_{s,2}}),\cr
f(x_{s+1},v_j,t_n)&=\tilde{f}(x_i,v,t_n)+(x_{s+1}-x_i+v_{j_1}\triangle t)\widetilde{\partial_xf}(x_i,v,t_n)+\frac{1}{2}(x_{s+1}-x_i+v^1\triangle t)^2\widetilde{\partial^2_xf}(z_{\theta_{s+1,1}})\cr
&+(v^1-v_{j_1})\triangle t\widetilde{\partial_xf}(x_i,v,t_n)+(v-v_{j})\cdot\widetilde{\nabla_v} \tilde{f}(z_{\theta_{s+1,2}}),
\end{align*}
and make a linear combination and cancel out the first order derivatives in $x$ using (\ref{aj1}) as in the proof of the previous lemma, to get
\begin{align}\label{toget}
\tilde{f}(x_i,v,t^n)&=(1-a_{j_1})f(x_s,v_j,t_n)+a_{j_1}f(x_{s+1},v_j,t_n)+R_{\theta}
\end{align}
where
\begin{align*}
R_{\theta}&=-(v^1-v_{j_1})\triangle t\widetilde{\partial_xf}(x_i,v,t_n)\cr
&+\frac{1}{2}(1-a_{j1})(x_s-x_i+v^1\triangle t)^2\tilde{\partial^2_xf}(z_{\theta_{s,1}})+\frac{1}{2}a_{j1}(x_s-x_i+v^1\triangle t)^2\widetilde{\partial^2_xf}(z_{\theta_{s+1,1}})\cr
&+(1-a_{j_1})(v-v_j)\cdot\widetilde{\nabla_vf}(z_{\theta_{s,2}})+a_{j_1}(v-v_j)\cdot\widetilde{\nabla_vf}(z_{\theta_{s+1,2}})
\end{align*}
From this, we see that
\begin{align*}
&\sum_j\int_{\triangle_j}\widetilde{f}(x_i,v,t_n)\phi(v_j)dv\cr
&\qquad=\sum_j\int_{\triangle_j}\left\{(1-a_{j_1})f(x_s,v_j,t_n)+a_{j_1}f(x_{s+1},v_j,t_n)+R_{\theta}\right\}\phi(v_j)dv\cr
&\qquad=\sum_j\left\{(1-a_{j_1})f(x_s,v_j,t_n)+a_{j_1}f(x_{s+1},v_j,t_n)\right\}(\triangle v)^3\phi(v_j)\cr
&\qquad+\sum_j\int_{\triangle_j}R_{\theta}\phi(v_j)dv,
\end{align*}
so that
\begin{align*}
&\Big|\sum_j\widetilde{f}^n_{i,j}\phi(v_j)(\triangle v)^3-\int_{\mathbb{R}^3}\widetilde{f}(x_i,v,t_n)\phi(v)dv\Big|\cr
&\qquad\leq\sum_j\big|\,\widetilde{f}^n_{i,j}-\big\{(1-a_{j_1})f(x_s,v_j,t_n)+a_{j_1}f(x_{s+1},v_j,t_n)\big\}\big||\phi(v_j)|(\triangle v)^3\cr
&\qquad+\sum_j\int_{\triangle_j}|R_{\theta}||\phi(v_j)|dv\cr
&\qquad=I_1+I_2.
\end{align*}
We first estimate $I_1$ since we have from the definition of $\widetilde{f}_{i,j}$,
\begin{align*}
&|\widetilde{f}^n_{i,j}-(1-a_{j_1})f(x_{s},v_j,t_n)-a_{j_1}f(x_{s+1},v_j,t_n)|\cr
&\quad=|(1-a_{j_1})f^n_{s,j}+a_{j_1}f^n_{s+1,j}-(1-a_{j_1})f(x_{s},v_j,t_n)-a_{j_1}f(x_{s+1},v_j,t_n)|\cr
&\quad\leq(1-a_{j_1})|f^n_{s,j}-f(x_{s},v_j,t_n)|+a_{j_1}|f^n_{s+1,j}-f(x_{s+1},v_j,t_n)|,
\end{align*}
$I_1$ can be estimated as follows:
\begin{align*}
I_1&=\sum_j\big|\,\widetilde{f}^n_{i,j}-\big\{(1-a_{j_1})f(x_s,v_j,t_n)+a_{j_1}f(x_{s+1},v_j,t_n)\big||\phi(v_j)|(\triangle v)^3\cr
&\leq\sum_j\Big\{(1-a_{j_1})\|f^n-f(t_n)\|_{L^{\infty}_q}+a_{j_1}\|f^n-f(t_n)\|_{L^{\infty}_q}\Big\}\frac{|\phi(v_j)|(\triangle v)^3}{(1+|v_j|)^q}\cr
&\leq\Big\{\sum_j\frac{\phi(v_j)(\triangle v)^3}{(1+|v_j|)^q}\Big\}\|f^n-f(t_n)\|_{L^{\infty}_q}\cr
&\leq C\|f^n-f(t_n)\|_{L^{\infty}_q},
\end{align*}
where we used $\phi(v_j)\leq C(1+|v_j|)^p$, $p=0,1,2$ and $q-p>3$.\newline
For $I_2$, we first observe from the definition of $x_s$ and the fact that $v\in\triangle_j$,
\begin{eqnarray*}
|x_i-v^1\triangle t-x_s|^2\leq 2|x_i-v_{j_1}\triangle t-x_s|^2+2|(v^1-v_{j_1})\triangle t|^2\leq 2\left\{(\triangle x)^2+(\triangle v)^2(\triangle t)^2\right\}.
\end{eqnarray*}
Similarly,
\begin{eqnarray*}
|x_{s+1}-(x_i-v^1\triangle t)|\leq  2\left\{(\triangle x)^2+(\triangle v)^2(\triangle t)^2\right\}.
\end{eqnarray*}
On the other hand, since $\triangle v<1/2$, we have ($i=1,2$)
\begin{align*}
\sum_{|\alpha|+|\beta|\leq 2}\big|\widetilde{\partial^{\alpha}_xf}(z_{\theta_i})\big|
+\big|\widetilde{\partial^{\beta}_vf}(z_{\theta_i})\big|&\leq \frac{\|f\|_{W^{2,\infty}_q}}{(1+|v_j+\theta_i\triangle v|)^q}\cr
&\leq \frac{\|f\|_{W^{2,\infty}_q}}{(1-|\triangle v|+|v_j|)^q}\cr
&\leq C_{T^f}\frac{\|f_0\|_{W^{2,\infty}_q}+1}{(1+|v_j|)^q},
\end{align*}
so that
\begin{align}\label{R}
|R_{\theta}|&\leq C_{T^f}\{(\triangle x)^2+(\triangle v)^2(\triangle t)^2+\triangle v+\triangle v\triangle t\}\frac{\|f_0\|_{W^{2,\infty}_q}+1}{(1+|v_j|)^q}\cr
&\leq C_{T^f}\{(\triangle x)^2+\triangle v+\triangle v\triangle t\}\frac{\|f_0\|_{W^{2,\infty}_q}+1}{(1+|v_j|)^q}.
\end{align}
Hence we have
\begin{align*}
I_2\leq
C_{T^f}\{(\triangle x)^2+\triangle v+\triangle v\triangle t\}\|f_0\|_{W^{2,\infty}_q}\sum_j\frac{|\phi(v_j)|(\triangle v)^3}{(1+|v_j|)^q}
\leq C_{T^f,f_0}\{(\triangle x)^2+\triangle v+\triangle v\triangle t\},
\end{align*}
where we used $\int_{\triangle_j}dv=(\triangle v)^3$.
Therefore, we have the following estimate for $I$
\begin{align*}
I\leq C\|f^n-f(t_n)\|_{L^{\infty}_q}+C_{T^f,f_0}\{(\triangle x)^2+\triangle v+\triangle v\triangle t\}.
\end{align*}
\newline
$\bullet$ {\bf (The estimate of $II$)}:
Since $|v_j|=|v+\theta \triangle v|\leq (1+|v|)$  in $\triangle_j$, we have for $v\in\triangle_j$
\[
|\phi(v)-\phi(v_j)\big|\leq C|v-v_j|\big\{|v|^p+|v_j|^p\big\}\leq C\triangle v(1+|v|)^p.\quad (p=0,1,2)
\]
With this, we can estimate $II$ as
\begin{align*}
II
&\leq C\triangle v \sum_j\int_{\triangle_j}\widetilde{f}(x_i,v,t_n)(1+|v|)^{p}dv\cr
&\leq C\triangle v \|f\|_{L^{\infty}_{q}}\sum_j\int_{\triangle j}\frac{dv}{(1+|v|)^{q-p+1}}\cr
&\leq C\triangle v \|f\|_{L^{\infty}_{q}}\int_{\mathbb{R}^3}\frac{dv}{(1+|v|)^{q-p+1}}\cr
&\leq C_{q-p+1}\|f\|_{L^{\infty}_{q}}\triangle v,
\end{align*}
which, upon applying Lemma \ref{ddtM1} yields
\begin{align}\label{I}
II\leq C_{T^f}\|f_0\|_{L^{\infty}_{q}}\triangle v.
\end{align}
Finally, we combine the estimates for $I$ and $II$ to get the desired result.
\end{proof}
%
%
%
%
\begin{lemma} \label{rho-rho}Let $\triangle v<r_{\triangle}$. Suppose $f_0$ satisfies the assumptions in Theorem \ref{Existence_Theorem}, then we have
\begin{align*}
&|\widetilde{\rho}^n_{i}-\widetilde{\rho}(x_i,t_n)|, ~|\widetilde{U}^n_{i}-\widetilde{U}(x_i,t_n)|,~ |\widetilde{\mathcal{T}}^n_{\nu,i}-\widetilde{\mathcal{T}}_{\widetilde{\nu}}(x_i,t_n)|\cr
&\hspace{2cm}\leq C\|f^n-f(t_n)\|_{L^{\infty}_q}+C_{T^f,f_0}\{(\triangle x)^2+\triangle v+\triangle v\triangle t\}.
\end{align*}
\end{lemma}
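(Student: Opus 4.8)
The plan is to reduce all three estimates to the moment comparison already furnished by Lemma \ref{Additional_Lemma2}, and then to pass from moments to the macroscopic fields by dividing out the density, keeping the denominators under control by means of the uniform bounds $E^n$ established in Theorem \ref{stability theorem}. The density estimate is immediate: taking $\phi(v)=1$ in Lemma \ref{Additional_Lemma2} gives exactly
\[
|\widetilde{\rho}^n_i-\widetilde{\rho}(x_i,t_n)|=\Big|\sum_j\widetilde{f}^n_{i,j}(\triangle v)^3-\int_{\mathbb{R}^3}\widetilde{f}(x_i,v,t_n)\,dv\Big|\leq C\|f^n-f(t_n)\|_{L^{\infty}_q}+C_{T^f,f_0}\{(\triangle x)^2+\triangle v+\triangle v\triangle t\}.
\]

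For the velocity, I would first control the momentum $\widetilde{\rho}^n_i\widetilde{U}^n_i$ by taking $\phi(v)=v$, and then use the telescoping identity
\[
\widetilde{U}^n_i-\widetilde{U}(x_i,t_n)=\frac{1}{\widetilde{\rho}^n_i}\Big\{\big(\widetilde{\rho}^n_i\widetilde{U}^n_i-\widetilde{\rho}(x_i,t_n)\widetilde{U}(x_i,t_n)\big)-\widetilde{U}(x_i,t_n)\big(\widetilde{\rho}^n_i-\widetilde{\rho}(x_i,t_n)\big)\Big\}.
\]
Both differences on the right are already estimated (momentum by $\phi=v$, density by $\phi=1$), so the only new ingredients are a lower bound for the denominator $\widetilde{\rho}^n_i$ and an upper bound for the factor $\widetilde{U}(x_i,t_n)$. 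The former is the property $(C^n)$ inside $E^n$, giving $\widetilde{\rho}^n_i\geq\tfrac12 C_{0,1}C_{\alpha}e^{-A_{\nu}T^f/\kappa}>0$ uniformly in $i,n$; the latter is supplied by the macroscopic bounds of Theorem \ref{Existence_Theorem}. This delivers the stated bound for $\widetilde{U}$.

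The temperature tensor is where the real work lies, since $\widetilde{\mathcal T}_{\tilde\nu}$ depends on the moments nonlinearly: both $\widetilde T$ and $\widetilde\Theta$ are \emph{centered} at the fluctuating velocity $\widetilde U$, so quadratic corrections in $\widetilde U$ appear. I would extract the scalar temperature from $\phi(v)=|v|^2$ via $3\widetilde{\rho}^n_i\widetilde{T}^n_i=\sum_j\widetilde{f}^n_{i,j}|v_j|^2(\triangle v)^3-\widetilde{\rho}^n_i|\widetilde{U}^n_i|^2$, and the stress tensor from $\phi(v)=(1-\nu)|v|^2Id+\nu\,v\otimes v$ after peeling off the contributions of $\widetilde{\rho}$, $\widetilde{\rho}\widetilde{U}$ and the scalar energy, using the identity $\sum_j\widetilde{f}^n_{i,j}v_j\otimes v_j(\triangle v)^3=\widetilde{\rho}^n_i\widetilde{\Theta}^n_i+\widetilde{\rho}^n_i\widetilde{U}^n_i\otimes\widetilde{U}^n_i$. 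Each reconstruction is a controlled moment difference plus quadratic-in-$\widetilde U$ remainders, which I would dispatch by repeated telescoping (writing $a^2-b^2=(a-b)(a+b)$ and $a\otimes a-b\otimes b=(a-b)\otimes a+b\otimes(a-b)$) together with the already proved estimates for $\widetilde{\rho}$ and $\widetilde{U}$, the uniform lower bound on $\widetilde{\rho}^n_i$, and the uniform upper bounds on $\widetilde{\rho}^n_i,\widetilde{U}^n_i,\widetilde{T}^n_i$ from $(D^n)$ and on the continuous fields from Theorem \ref{Existence_Theorem}. Finally I would assemble $\widetilde{\mathcal T}_{\tilde\nu}=(1-\tilde\nu)\widetilde T\,Id+\tilde\nu\widetilde\Theta$ using the fixed weight $\tilde\nu=\kappa\nu/(\kappa+\triangle t)$, which is common to the discrete and continuous sides (this is why the right-hand side carries no bare $\triangle t$), obtaining the estimate for the temperature tensor with the same right-hand side.

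The delicate point throughout, and the main obstacle, is that every division by $\widetilde{\rho}^n_i$ must be controlled uniformly in $n$: this is precisely why the stability estimate $E^n$ (in particular the lower bound $(C^n)$ and the upper bounds $(D^n)$) was set up beforehand, and why the restriction $\triangle v<r_{\triangle v}$ is in force. With those bounds in hand every coefficient generated by the telescoping is bounded by a constant of the form $C(T^f,f_0,q,\kappa,\nu)$, so no new powers of $\triangle x,\triangle v,\triangle t$ are introduced beyond those already present in Lemma \ref{Additional_Lemma2}, and the three estimates share the common right-hand side $C\|f^n-f(t_n)\|_{L^{\infty}_q}+C_{T^f,f_0}\{(\triangle x)^2+\triangle v+\triangle v\triangle t\}$.
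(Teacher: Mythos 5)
Your proposal is correct, and for the density and velocity it is essentially the paper's own argument: the same invocation of Lemma \ref{Additional_Lemma2} with $\phi=1$ and $\phi=v$, the same telescoping identity for $\widetilde{U}^n_i-\widetilde{U}(x_i,t_n)$, and the same control of the denominator $\widetilde{\rho}^n_i$ by the stability estimates (the paper cites the lower bound of Lemma \ref{LB T Lemma} and the upper bound of Lemma \ref{f stability}, which is the content of your $(C^n)$, $(D^n)$). Where you genuinely diverge is the temperature tensor. The paper compares the $\widetilde{\nu}$-weighted discrete tensor against the $\nu$-weighted continuous one and decomposes
\begin{align*}
\widetilde{\mathcal{T}}^n_{\widetilde{\nu},i}-\widetilde{\mathcal{T}}_{\widetilde{\nu}}(x_i,t_n)
=(\nu-\widetilde{\nu})\big\{\widetilde{T}^n_i Id-\widetilde{\Theta}^n_i\big\}
+\Big[\big\{(1-\nu)\widetilde{T}^n_i Id+\nu\widetilde{\Theta}^n_i\big\}
-\big\{(1-\nu)\widetilde{T}(x_i,t_n) Id+\nu\widetilde{\Theta}(x_i,t_n)\big\}\Big]
\equiv I+II,
\end{align*}
bounding $I$ by $C_{T^f}\,\kappa\triangle t/(\kappa+\triangle t)$ via $|\nu-\widetilde{\nu}|=\nu\triangle t/(\kappa+\triangle t)$, and reducing $II$ to differences of raw moments through an algebraic identity for $\mathcal{T}_{\nu,f}-\mathcal{T}_{\nu,g}$ — i.e.\ exactly the reduction you sketch, with the computation omitted. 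Your version instead keeps the common weight $\widetilde{\nu}$ on both sides, so the mismatch term $I$ never appears. Your reading is the one consistent with the lemma as stated (whose right-hand side carries no bare $\triangle t$) and with how the lemma is consumed in Proposition \ref{ContinuityofMaxwellianDiscrete}, where both Gaussians carry the parameter $\widetilde{\nu}$; the paper's route makes the $\nu$-versus-$\widetilde{\nu}$ discrepancy explicit, but at the cost of an extra $O(\triangle t)$ contribution that its stated bound does not display (harmless for the final convergence rate, since the lemma's output enters the error recursion multiplied by $\triangle t/(\kappa+A_{\nu}\triangle t)$, but strictly a mismatch between the paper's proof and its statement). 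One small technical point in your sketch: $\phi(v)=v\otimes v$ is not literally on the list of admissible weights in Lemma \ref{Additional_Lemma2}, so to extract $\widetilde{\Theta}$ you must either combine the listed weights $(1-\nu)|v|^2 Id+\nu\,v\otimes v$ and $|v|^2$ (which degenerates at $\nu=0$, where the tensor is $\widetilde{T}Id$ anyway), or note that the proof of that lemma applies verbatim to $v\otimes v$; either remark closes the gap.
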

\begin{proof}
Note that
\begin{align*}
\widetilde{\rho}^n_{i}-\widetilde{\rho}(x_i,t_n)=\sum_{j} \widetilde{f}^n_{i,j}(\triangle v)^3-\int_{\mathbb{R}^3}\widetilde{f}(x_i,v,t_n)dv.
\end{align*}
Therefore, it's a direct consequence of the Lemma \ref{Additional_Lemma2}. For the second estimate, we observe that
\begin{align*}
\widetilde{U}^n_{i}-\widetilde{U}(x_i,t_n)&=\frac{1}{\widetilde{\rho}^n_i}\big\{\widetilde{\rho}^n_i \widetilde{U}^n_i-\big\{\widetilde{\rho}
\widetilde{U}\big\}(x_i,t_n)\big\}-\frac{\widetilde{U}}{\widetilde{\rho}^n_i}\big\{\widetilde{\rho}^n_i-\widetilde{\rho}(x_i,t_n)\big\}.
\end{align*}
We then recall the lower bound of the discrete density in Lemma \ref{LB T Lemma}:
\begin{align*}
\widetilde{\rho}^n_i\geq C^1_0C_{\alpha}e^{-\frac{A_{\nu}}{\kappa}T^f},
\end{align*}
to compute
\begin{align*}
\bigg|\frac{\widetilde{U}^n_i}{\widetilde{\rho}^n_i}\bigg|
&=\frac{1}{(\widetilde{\rho}^n_i\big)^2}\big|\widetilde{\rho}^n_i\widetilde{U}^n_i\big|
\leq C_{\alpha,T^f}\sum_j|\widetilde{f}^n_{i,j}||v_j|(\triangle v)^3\cr
&\leq C_{\alpha,T^f}\|f^n\|_{L^{\infty}_q}\sum_j\frac{(\triangle v)^3}{(1+|v_j|)^{q-1}}\cr
&\leq C_{\alpha,T^f}e^{\frac{(C_{\mathcal{M}-1}) T^f}{\kappa+A_{\nu}\triangle t}}\|f^0\|_{L^{\infty}_q}.
\end{align*}
Here, we used Lemma \ref{f stability}. Hence we have
\begin{align*}
|\widetilde{U}^n_{i}-\widetilde{U}(x_i,t_n)|
&\leq C_{T^f,\kappa,f_0}\Big\{|\big\{\widetilde{\rho}^n_i \widetilde{U}^n_i-\big\{\widetilde{\rho}
\widetilde{U}\big\}(x_i,t_n)\big\}|+|\widetilde{\rho}^n_i-\widetilde{\rho}(x_i,t_n)|\Big\}\cr
&\leq C_{T^f,\kappa,f_0}\Big|\sum_j\widetilde{f}^n_{i,j}(\triangle v)^3-\int_{\mathbb{R}^3}\widetilde{f}(x_i,v,t_n)dv\Big|\cr
&+ C_{T^f,\kappa,f_0}\Big|\sum_j\widetilde{f}^n_{i,j}\,v_j(\triangle v)^3-\int_{\mathbb{R}^3}\widetilde{f}(x_i,v,t_n)\,vdv\Big|,
\end{align*}
which, in view of Lemma \ref{Additional_Lemma2},  gives the desired result. For the estimate of the temperature tensor, we recall that $\widetilde{\mathcal{T}}^n_{\tilde{\nu},i}$ contains $\tilde{\nu}$, and decompose it as
\begin{align*}
\widetilde{\mathcal{T}}^n_{\widetilde{\nu},i}-\widetilde{\mathcal{T}}_{\widetilde{\nu}}(x_i,t_n)
&=(1-\widetilde{\nu})\widetilde{T}^n_iId+\widetilde{\nu}\widetilde{\Theta}^n_i
-\left\{(1-\nu)\widetilde{T}(x_i,t_n)Id+\nu\widetilde{\Theta}(x_i,t_n)\right\}\cr
&=(\nu-\widetilde{\nu})\left\{\widetilde{T}^n_iId-\widetilde{\Theta}^n_i\right\}\cr
&+\left[\left\{(1-\nu)\widetilde{T}^n_iId+\nu\widetilde{\Theta}^n_i\right\}
-\left\{(1-\nu)\widetilde{T}(x_i,t_n)Id+\nu\widetilde{\Theta}(x_i,t_n)\right\}\right]\cr
&\equiv I+II.
\end{align*}
Since $|\widetilde{\nu}-\nu|=\frac{\nu\triangle t}{\kappa+\triangle t}$, $I$ is bounded by $ C_{T^f}\frac{\kappa\triangle t}{\kappa+\triangle t}$.
The estimate of $II$ can be carried out in the exactly same manner as in the previous case, through a tedious computation, using the following identity:
\begin{align*}
\mathcal{T}_{\nu,f}-\mathcal{T}_{\nu,g}&=\rho^{-1}_f\left\{(\rho\mathcal{T}_{\nu}+\rho U\otimes U)_f-(\rho\mathcal{T}_{\nu}+\rho U\otimes U)_g\right\}
+\frac{(\rho \mathcal{T}_{\nu}+\rho U\otimes U)_g}{\rho_f\rho_g}(\rho_f-\rho_g)\cr
&+\frac{1}{\rho^2_f}\{\rho_f U_f\otimes \rho_f U_f-\rho_g U_g\otimes \rho_gU_g\}
-\frac{(\rho_f+\rho_g)}{\rho_f^2\rho_g^2}(\rho_g U_g)^2(\rho_f-\rho_g).
\end{align*}
We omit the computation.
\end{proof}
%
%
\begin{proposition}\label{ContinuityofMaxwellianDiscrete}Assume that $\|f^n\|_{L^{\infty}_q}<\infty$
 with $q>5$. Then we have
\begin{align*}
\|\mathcal{M}_{\widetilde{\nu}}(\widetilde{f}(t_n))-\mathcal{M}_{\widetilde{\nu}}(\widetilde{f}^n)\|_{L^{\infty}_q}\leq C_{T^f}\|f^n - f(t_n)\|_{L^{\infty}_q}
+C_{T^f}\{(\triangle x)^2+\triangle v+\triangle v\triangle t\}.
\end{align*}
The constants depend only on $q, \nu, T^f$ and $f_0$.
\end{proposition}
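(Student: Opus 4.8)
The plan is to exploit the fact that, by construction, both ellipsoidal Gaussians on the left-hand side are the \emph{same} function of their macroscopic parameters, evaluated at two different argument triples. Introducing
\begin{align*}
G(\rho,U,\mathcal{T})(v)=\frac{\rho}{\sqrt{\det(2\pi\mathcal{T})}}\exp\Big(-\tfrac{1}{2}(v-U)^{\top}\mathcal{T}^{-1}(v-U)\Big),
\end{align*}
we have $\mathcal{M}_{\widetilde{\nu}}(\widetilde{f}^n)=G(P^n)$ with $P^n=(\widetilde{\rho}^n_i,\widetilde{U}^n_i,\widetilde{\mathcal{T}}^n_{\widetilde{\nu},i})$, and $\mathcal{M}_{\widetilde{\nu}}(\widetilde{f}(t_n))=G(P(t_n))$ with $P(t_n)=(\widetilde{\rho}(x_i,t_n),\widetilde{U}(x_i,t_n),\widetilde{\mathcal{T}}_{\widetilde{\nu}}(x_i,t_n))$. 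Thus the whole statement reduces to a uniform Lipschitz estimate for the map $P\mapsto G(P)$ in the weighted norm $\|\cdot\|_{L^{\infty}_q}$; once that is in hand, Lemma \ref{rho-rho} immediately finishes the argument, since it bounds exactly the parameter differences $|\widetilde{\rho}^n_i-\widetilde{\rho}(x_i,t_n)|$, $|\widetilde{U}^n_i-\widetilde{U}(x_i,t_n)|$ and $|\widetilde{\mathcal{T}}^n_{\widetilde{\nu},i}-\widetilde{\mathcal{T}}_{\widetilde{\nu}}(x_i,t_n)|$ by the right-hand side of the claimed inequality.

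To obtain the Lipschitz estimate I would interpolate linearly between the two triples, setting $\rho_\theta=(1-\theta)\widetilde{\rho}^n_i+\theta\widetilde{\rho}(x_i,t_n)$ and likewise for $U_\theta$, $\mathcal{T}_\theta$, and write
\begin{align*}
G(P^n)-G(P(t_n))=\int_0^1\Big\{\partial_\rho G\,\delta\rho+\partial_U G\cdot\delta U+\partial_\mathcal{T} G:\delta\mathcal{T}\Big\}\Big|_{(\rho_\theta,U_\theta,\mathcal{T}_\theta)}\,d\theta,
\end{align*}
where $\delta\rho,\delta U,\delta\mathcal{T}$ denote the parameter differences. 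The crucial point is that along the entire path the parameters stay in a fixed compact region bounded away from degeneracy. For the discrete endpoint this follows from the stability property $E^n$ of Theorem \ref{stability theorem}: the lower bounds on $\widetilde{\rho}^n_i$ and $\widetilde{T}^n_i$ in $(C^n)$ (the latter upgrading to a uniform lower bound on $\widetilde{\mathcal{T}}^n_{\widetilde{\nu},i}$ through Lemma \ref{equivalence}) together with the upper bounds $(D^n)$. For the continuous endpoint it is precisely part (2) of Theorem \ref{Existence_Theorem}. Since positive definiteness and two-sided bounds are preserved under convex combinations, the interpolated tensor satisfies $k^{\top}\mathcal{T}_\theta k\geq c|k|^2$ for a uniform $c>0$, so that $\det\mathcal{T}_\theta$, $\mathcal{T}_\theta^{-1}$, and hence $\partial_\rho G,\partial_U G,\partial_\mathcal{T} G$ remain controlled uniformly in $\theta$.

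It then remains to bound these parameter-derivatives of $G$ in $L^{\infty}_q$. Each one is a fixed polynomial in $v-U_\theta$ times the Gaussian $G(\rho_\theta,U_\theta,\mathcal{T}_\theta)$, so its exponential decay beats the weight $(1+|v|)^q$ uniformly once $\mathcal{T}_\theta$ is bounded below; this is the same mechanism already used in Lemma \ref{Control Max} and in the bound $\|\partial\mathcal{M}_\nu/\partial\mathcal{T}_\nu\|_{L^{\infty}_q}\leq C_{T^f}$ recalled from \cite{Yun2}. Combining this with the mean value representation gives $\|\mathcal{M}_{\widetilde{\nu}}(\widetilde{f}(t_n))-\mathcal{M}_{\widetilde{\nu}}(\widetilde{f}^n)\|_{L^{\infty}_q}\leq C\{|\delta\rho|+|\delta U|+|\delta\mathcal{T}|\}$, and Lemma \ref{rho-rho} yields the claim with constants depending only on $q,\nu,T^f,f_0$. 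I expect the main obstacle to be the bookkeeping in verifying the uniform positive-definiteness and two-sided control of $\mathcal{T}_\theta$ along the path—that is, checking that the discrete bounds of Section 5 and the continuous bounds of Theorem \ref{Existence_Theorem} combine into a single compact region on which $G$ is smooth and Lipschitz—rather than the differentiation itself, which is routine once degeneracy has been excluded.
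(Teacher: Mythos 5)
Your proposal follows essentially the same route as the paper's proof: the paper also writes both Gaussians as one function of the macroscopic triple, expands the difference as an integral of parameter-derivatives along the linear interpolation $(\widetilde{\rho}^n_{\theta i},\widetilde{U}^n_{\theta i},\widetilde{\mathcal{T}}^n_{\theta i})$, controls the transitional fields using the discrete stability bounds of Section 5 together with Theorem \ref{Existence_Theorem}, shows each derivative is dominated by a Gaussian that beats the weight $(1+|v_j|)^q$, and concludes with Lemma \ref{rho-rho}. The only cosmetic difference is that the paper obtains the lower bound on $\det\widetilde{\mathcal{T}}^n_{\theta i}$ via the Brunn--Minkowski inequality applied to the endpoint determinants, whereas you deduce it from the uniform positive-definiteness of the convex combination; both are valid.
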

\begin{proof}
We note that
\begin{align*}
&\mathcal{M}_{\widetilde{\nu}}(\tilde{f}(x_i,t_n))(v_j)-\mathcal{M}_{\widetilde{\nu},j}(\widetilde{f}^n_i)\cr
&\qquad=\mathcal{M}_{\widetilde{\nu}}\big(\widetilde{\rho}(x_i,t_n)),\widetilde{U}(x_i,t_n),\widetilde{\mathcal{T}}(x_i,t_n)\big)(v_j)
-\mathcal{M}_{\widetilde{\nu}}\big(\widetilde{\rho}_i^n,\widetilde{U}_i^n,\widetilde{\mathcal{T}}_i^n\big)(v_j).
\end{align*}
Therefore, applying the Taylor series, we expand this as
\begin{align}\label{turnbackto}
\begin{split}
\mathcal{M}_{\widetilde{\nu}}(\tilde{f}(x_i,t_n))(v_j)-\mathcal{M}_{\widetilde{\nu},j}(\widetilde{f}^n_i)
&=\big\{\widetilde{\rho}(x_i,t_n)-\widetilde{\rho}_i^n\big\}\int^1_0\frac{\partial\mathcal{M}_{\widetilde{\nu}}(\theta)}{\partial \rho}d\theta\cr
&+\big\{\widetilde{U}(x_i,t_n)-\widetilde{U}_i^n\big\}\int^1_0\frac{\partial\mathcal{M}_{\widetilde{\nu}}(\theta)}{\partial U}d\theta\cr
&+\big\{\widetilde{\mathcal{T}}_{\widetilde{\nu}}(x_i,t_n)-\widetilde{\mathcal{T}}_{\widetilde{\nu},i}^n\big\}\int^1_0\frac{\partial\mathcal{M}_{\widetilde{\nu}}(\theta)}{\partial\mathcal{T}_{\nu}}d\theta\cr
&\equiv I_1+I_2+I_3,
\end{split}
\end{align}
where
\begin{align*}
\frac{\partial\mathcal{M}_{\widetilde{\nu}}(\theta)}{\partial X}=\frac{\partial\mathcal{M}_{\widetilde{\nu}}}{\partial X}
\Big|_{(\widetilde{\rho}^n_{i,\theta}, \widetilde{U}^n_{i,\theta}, \widetilde{\mathcal{T}}^n_{i,\theta})}.
\end{align*}
For simplicity of notation, we define transitional macroscopic fields $\widetilde{\rho}^n_{\theta i}$,
$\widetilde{U}^n_{\theta i}$ and $\widetilde{\mathcal{T}}^n_{\theta i}$ by
\begin{align*}
(\widetilde{\rho}^n_{\theta i}, \widetilde{U}^n_{\theta i}, \widetilde{\mathcal{T}}^n_{\theta i})
=(1-\theta)\Big(\widetilde{\rho}(x_i,t_n), \widetilde{U}(x_i,t_n), \widetilde{\mathcal{T}}_{\nu}(x_i,t_n)\Big)
+\theta\Big(\widetilde{\rho}^{n}_{i}, \widetilde{U}^{n}_{i}, \widetilde{\mathcal{T}}^{n}_{\widetilde{\nu},i}\Big).
\end{align*}
Since this is a linear combination, we can derive the following estimates for the transitional macroscopic fields:
\begin{align}\label{trans Macro}
\begin{split}
& \|\widetilde{\rho}^n_{\theta i}\|_{L^{\infty}_x}, \|\widetilde{U}^n_{\theta i}\|_{L^{\infty}_x},
\|\widetilde{T}^n_{\theta i}\|_{L^{\infty}_x}
\leq C_{T^f},\cr
&\widetilde{\rho}^n_{\theta i}\geq C_{T^f}e^{-CT^f}, ~\widetilde{T}^n_{\theta i}\geq C_{T^f}e^{-C T^f},\cr
& k^{\top}\big\{\widetilde{\mathcal{T}}^n_{\theta i}\big\}k\geq C_{T^f}e^{-CT^f}|k|^2,\quad k\in \mathbb{R}^3.
\end{split}
\end{align}
from the lower and upper bounds of continuous and discrete macroscopic fields given in Theorem \ref{Existence_Theorem}, Proposition \ref{LB T Lemma} and Proposition
\ref{UB rho Lemma}. On the other hand,  Brum-Minkowski inequality implies that
\begin{align}\label{BM}
\begin{split}
\det \big\{\widetilde{\mathcal{T}}^n_{\theta i}\big\}&=\det\big\{(1-\theta)\widetilde{\mathcal{T}}_{\widetilde{\nu}}(x_i,t_n)+\theta \widetilde{\mathcal{T}}^n_{\widetilde{\nu},i}\big\}\cr
&\geq \det\big\{\widetilde{\mathcal{T}}_{\widetilde{\nu}}(x_i,t_n)\big\}^{1-\theta}\det\big\{ \widetilde{\mathcal{T}}^n_{\widetilde{\nu},i}\big\}^{\theta}\cr
&\geq \big\{C_{T^f}e^{-CT^f}\big\}^{1-\theta}\big\{C_{T^f}e^{-CT^f}\big\}^{\theta}\cr&=C_{T^f}e^{-CT^f}.
\end{split}
\end{align}
From these observations we have
\begin{align}\label{43}
\begin{split}
\mathcal{M}_{\widetilde{\nu}}(\theta)&=\frac{\widetilde{\rho}^n_{\theta i}}{\sqrt{\det(2\pi
\widetilde{\mathcal{T}}^n_{\theta i})}}
\exp\left(-\frac{1}{2}(v_j-\widetilde{U}^n_{\theta i})^{\top}\big\{\widetilde{\mathcal{T}}^n_{\theta i}\big\}^{-1}(v_j-\widetilde{U}^n_{\theta i})\right)\cr
&\leq C_{T^f}\exp\left(-C_{T^f}|v_j-\widetilde{U}^n_{\theta i}|^2\right)\cr
&\leq C_{T^f}\exp\left(C_{T^f}|\widetilde{U}^n_{\theta i}|^2\right) \exp\left(-C_{T^f}|v_j|^2\right)\cr
&\leq C_{T^f,1}e^{-C_{T^f,2}|v_j|^2}.
\end{split}
\end{align}
We now estimate each integral in $I_i$ $(i=1,2,3)$.
%
%
%
%
The integral in $I_1$  comes straightforwardly from (\ref{43}):
\begin{align}\label{sub1}
\begin{split}
\left|\int^1_0\frac{\partial\mathcal{M}_{\widetilde{\nu}}(\theta)}{\partial\widetilde{\rho}^n_{\theta i}}ds\right|
= \left|\int^1_0\frac{1}{\widetilde{\rho}^n_{\theta i}}\mathcal{M}_{\widetilde{\nu}}(\theta)ds\right|
\leq C_{T^f,1}e^{-C_{T^f,2}|v_j|^2}.
\end{split}
\end{align}
For the integral in $I_2$, we compute
\begin{align}\label{sub2}
\begin{split}
\left|\frac{\partial\mathcal{M}_{\widetilde{\nu}}(\theta)}{\partial \widetilde{U}^n_{\theta i}}\right|
&=\Big|(v_j-\widetilde{U}^n_{\theta i})^{\top}\{\widetilde{\mathcal{T}}^n_{\theta i}\}^{-1}
+\{\widetilde{\mathcal{T}}^n_{\theta i}\}^{-1}(v_j-\widetilde{U}^n_{\theta i})
\Big|\mathcal{M}_{\tilde{\nu}}(\theta)\cr
&\leq C_{T^f,1}\Big|(v_j-\widetilde{U}^n_{\theta i})^{\top}\{\widetilde{\mathcal{T}}^n_{\theta i}\}^{-1}+\{\widetilde{\mathcal{T}}^n_{\theta i}\}^{-1}(v_j-\widetilde{U}^n_{\theta i})
\Big|e^{-C_{T^f,2}|v_j|^2}.
\end{split}
\end{align}
In the last line, we used (\ref{43}).
For simplicity, set $X=v_j-\widetilde{U}^n_{\theta i}$. Then,
\begin{align*}
\big|X^{\top}\big\{\widetilde{\mathcal{T}}^n_{\theta i}\big\}^{-1}\big|&=\sup_{|Y|=1}\big|X^{\top}\{\widetilde{\mathcal{T}}^n_{\theta i}\}^{-1}Y\big|\cr
&=\frac{1}{2}\sup_{|Y|=1}\Big|(X+Y)^{\top}
\{\widetilde{\mathcal{T}}^n_{\theta i}\}^{-1}(X+Y)-X^{\top}
\{\widetilde{\mathcal{T}}^n_{\theta i}\}^{-1}X-Y^{\top}
\{\widetilde{\mathcal{T}}^n_{\theta i}\}^{-1}Y\Big|\cr
&\leq C\sup_{|Y|=1}\left(\frac{|X+Y|^2+|X|^2+1}{\widetilde{T}^n_{\theta i}}\right)\cr
&\leq C\left(\frac{1+|v_j-\widetilde{U}^n_{\theta i}|^2}{\widetilde{T}^n_{\theta i}}\right),
\end{align*}
which is, by (\ref{trans Macro}),  bounded by  $C_{T^f}(1+|v_j|)^2$. Similarly, we can derive
\begin{align}
|\{\widetilde{\mathcal{T}}^n_{\theta i}\}^{-1}(v_j-\widetilde{U}^n_{\theta i})|\leq C_{T^f}(1+|v_j|)^2,
\end{align}
so that from (\ref{sub2})
\begin{align}
\left|\int^1_0\frac{\partial\mathcal{M}_{\widetilde{\nu}}(\theta)}{\partial \widetilde{U}^n_{\theta i}}ds\right|\leq C_{T^f,1}(1+|v_j|)^2e^{-C_{T^f,2}|v_j|^2}.
\end{align}
We now turn to the integral in $I_3$. We first observe
\begin{align*}
\frac{\partial \mathcal{M}_{\widetilde{\nu}}(\theta)}{\partial \widetilde{\mathcal{T}}^{n\alpha\beta}_{\theta i}}
&=-\frac{1}{2}\left\{\frac{1}{\{\det\widetilde{\mathcal{T}}^n_{\theta i}\}^{1/2}}\frac{\partial\det\widetilde{\mathcal{T}}^n_{\theta}}{\partial\widetilde{\mathcal{T}}^{n\alpha\beta}_{\theta i}}
\right\}
\mathcal{M}_{\tilde{\nu}}(\theta)\cr
&+\frac{1}{2}\left\{(v_j-\widetilde{U}^n_{\theta i})^{\top}\{\widetilde{\mathcal{T}}^n_{\theta i}\}^{-1}\left(\frac{\partial\widetilde{\mathcal{T}}^n_{\theta i}}{\partial{\widetilde{\mathcal{T}}^{n\alpha\beta}_{\theta i}}}\right)
\{\widetilde{\mathcal{T}}^n_{\theta}\}^{-1}(v_j-\widetilde{U}^n_{\theta i})\right\}
\mathcal{M}_{\tilde{\nu}}(\theta).
\end{align*}
To estimate the first term, we observe through an explicit computation that
\begin{align*}
\det\widetilde{\mathcal{T}}^n_{\theta i}
=\widetilde{\mathcal{T}}^{n11}_{\theta i}\widetilde{\mathcal{T}}^{n22}_{\theta i}\widetilde{\mathcal{T}}^{33}_{\theta i}
-2\widetilde{\mathcal{T}}^{n12}_{\theta i}\widetilde{\mathcal{T}}^{n23}_{\theta i}\widetilde{\mathcal{T}}^{n31}_{\theta i}-\widetilde{\mathcal{T}}^{n11}_{\theta i}\left\{\widetilde{\mathcal{T}}^{n23}_{\theta i}\right\}^2
-\widetilde{\mathcal{T}}^{n22}_{\theta i}\left\{\widetilde{\mathcal{T}}^{n31}_{\theta i}\right\}^2
-\widetilde{\mathcal{T}}^{n33}_{\theta i}\left\{\widetilde{\mathcal{T}}^{n12}_{\theta i}\right\}^2.
\end{align*}
Therefore, $\frac{\partial\det\widetilde{\mathcal{T}}_{\theta}}{\partial\widetilde{\mathcal{T}}^{n\alpha\beta}_{\theta i}}$ is a homogeneous polynomial of degree 2 having $\widetilde{\mathcal{T}}^{n \alpha\beta}_{\theta i}$ $(1\leq \alpha,\beta\leq 3)$ as variables. Now, recalling
$\widetilde{\mathcal{T}}_{\theta i}^{n \alpha\beta}\leq Ce^{T^f}$ from (\ref{trans Macro}), together with the lower bound of
$\det\widetilde{\mathcal{T}}^n_{\theta i}$ in (\ref{BM}), we conclude
\begin{align*}
\left|\frac{1}{\{\det\widetilde{\mathcal{T}}^n_{\theta i}\}^{1/2}}\frac{\partial\det\widetilde{\mathcal{T}}^n_{\theta i}}{\partial\widetilde{\mathcal{T}}_{\theta i}^{n \alpha\beta}}\right|\leq C_{T^f}.
\end{align*}
On the other hand, since all the entries of $\frac{\partial\mathcal{T}_{\theta}}{\partial{\mathcal{T}_{\theta}^{\alpha\beta}}}$ are 0 except for the $\alpha,\beta$ entry, we see that
\begin{align*}
\left|(v-\widetilde{U}^n_{\theta i})^{\top}\!\big\{\widetilde{\mathcal{T}}^{n}_{\theta i}\big\}^{-1}\!\!\left(\frac{\partial\widetilde{\mathcal{T}}^n_{\theta i}}{\partial{\widetilde{\mathcal{T}}_{\theta i}^{n \alpha\beta}}}\right)\!\!\big\{\widetilde{\mathcal{T}}^{n}_{\theta i}\big\}^{-1}\!
(v-\widetilde{U}^n_{\theta i})\!\right|
\leq \!\left|(v-\widetilde{U}^n_{\theta \alpha})^{\top}\!\{\widetilde{\mathcal{T}}^{n}_{\theta \alpha}\}^{-1}\!\right|
\!\left|\!\{\widetilde{\mathcal{T}}^{n}_{\theta \beta}\}^{-1}(v-\widetilde{U}^n_{\theta \beta})\!\right|,
\end{align*}
which can be shown to be bounded by $C_{T^f,\nu}(1+|v_j|)^2$
using the same argument as in the previous case. Combining these two estimates, we bound the integral in $I_3$ as
\begin{align*}
\int^1_0\Big|\frac{\partial \mathcal{M}_{\widetilde{\nu}}(\theta)}{\partial \widetilde{\mathcal{T}}_{\theta}}\Big|d\theta\leq C_{T^f}e^{-C_{T^f}|v_j|^2}.
\end{align*}
We now insert all the above estimates into (\ref{turnbackto}) to get
\begin{align*}
&|{\mathcal M}^n_{\widetilde{\nu},j}(\tilde{f}^n_i)-{\mathcal M}_{\widetilde{\nu}}(\tilde{f}(x_i,t_n))(v_j)|\\
&\hspace{2cm}\leq C_{T^f}\Big\{|\widetilde{\rho}^n_{i}-\widetilde{\rho}(x_i,t_n)|
+|\widetilde{U}^n_{i}-\widetilde{U}(x_i,t_n)|+|\widetilde{\mathcal{T}}^n_{\widetilde{\nu},i}-\widetilde{\mathcal{T}}_{\widetilde{\nu}}(x_i,t_n)|\Big\}e^{-C_{T^f}|v_j|^2}.
\end{align*}
Then, Lemma \ref{rho-rho}  gives the desired result.
\end{proof}
%
%
%
%
\section{ Proof of Theorem \ref{maintheorem}}
We are now ready to prove our main theorem.
Subtracting (\ref{main scheme}) from (\ref{Consist3}) and taking $L^{\infty}_q$ norms, we get
\begin{align*}
\|f(t_{n+1})-f^{n+1}\|_{L^{\infty}_q}&\leq\frac{\kappa}{\kappa+A_{\nu}\triangle t}
\|\widetilde{f}(t_n)-\widetilde{f}^{n}\|_{L^{\infty}_q}\cr
&+\frac{A_{\nu}\triangle t}{\kappa+A_{\nu}\triangle t}\|{\mathcal M}_{\widetilde{\nu}}(\widetilde{f}(t_n))-{\mathcal M}_{\widetilde{\nu}}(\widetilde{f}^n)\|_{L^{\infty}_q}\nonumber\cr
&+\frac{A_{\nu}}{\kappa+A_{\nu}\triangle t}\|R_1\|_{L^{\infty}_q}
+\frac{A_{\nu}}{\kappa+A_{\nu}\triangle t}\|R_2\|_{L^{\infty}_q}.
\end{align*}
We then recall the estimates in Lemma \ref{Remainder1}, Lemma \ref{Additional_Lemma1} and Proposition \ref{ContinuityofMaxwellianDiscrete}:
\begin{align*}
\|\widetilde{f}(t_n)-\widetilde{f}^n\|_{L^{\infty}_q}
&\leq\|f(t_n)-f^n\|_{L^{\infty}_q}+C_{T^f}(\triangle x)^2,\nonumber\cr
\|{\mathcal M}_{\widetilde{\nu}}(\widetilde{f}(t_n))-{\mathcal M}_{\widetilde{\nu}}(\widetilde{f}^n)\|_{L^{\infty}_q}
&\leq C_{T^f}\|f(t_n)-f^n\|_{L^{\infty}_q}+C_{T^f}\left\{(\triangle x)^2+\triangle v+\triangle v\triangle t\right\},
\nonumber\cr
\|R_1\|_{L^{\infty}_q}+\|R_2\|_{L^{\infty}_q}&\leq C_{T^f}\|f_0\|_{W^{1,\infty}_q}(\triangle t)^2,\nonumber
\end{align*}
to derive the following recurrence inequality for the numerical error:
\begin{align}\label{NE}
\|f(t_{n+1})-f^{n+1}\|_{L^{\infty}_q}&=\Big(1+\frac{C_{T^f}\triangle t}{\kappa+A_{\nu}\triangle t}\Big)\|f(t_n) - f^{n}\|_{L^{\infty}_q}
+C_{T^f}P(\triangle x,\triangle v, \triangle t),
\end{align}
where
\[
P(\triangle x,\triangle v, \triangle t)=\frac{(\triangle x)^2+\triangle t\{(\triangle x)^2+\triangle v+\triangle v\triangle t\}+(\triangle t)^2}{\kappa+A_{\nu}\triangle t}.
\]
Put $\Gamma=\frac{C_{T^f}\triangle t}{\kappa+A_{\nu}\triangle t}$ for simplicity of notation and
iterate (\ref{NE}) until  the initial step is reached, to derive:
\begin{align}\label{a.f.estimate}
\|f(N_t\triangle t)-f^{N_t}\|_{L^{\infty}_q}\leq(1+\Gamma)^{N_t}\|\,f_0 - f^{0}\,\|_{L^{\infty}_q}
+C_{\nu}\sum_{i=1}^{N_t}(1+\Gamma)^{i-1} P.
\end{align}
We first note from the definition of $f^0_{i,j}=f_0(x_i,v_j)$ that
\[
\|\,f_0 - f^{0}\,\|_{L^{\infty}_q}=\sup_{i,j}|f_0(x_i,v_j)-f^0_{i,j}|(1+|v_j|)^q=0.
\]
Besides, we use $(1+x)^n\leq e^{nx}$ to see that
\begin{align*}
(1+\Gamma)^{N_t}\leq e^{N_t\Gamma }\leq e^{\frac{C_{T^f}N_t\triangle t}{\kappa+A_{\nu}\triangle t}}
= e^{\frac{C_{T^f}T^f}{\kappa+\triangle t}},
\end{align*}
so that
\begin{align*}
\sum_{i=1}^{N_t}(1+\Gamma)^{i-1}=\frac{(1+\Gamma)^{N_t}-1}{(1+\Gamma)-1}
\leq C_{T^f}\left(\frac{\kappa+A_{\nu}\triangle t}{\triangle t}\right)\left\{e^{\frac{C_{T^f}T^f}{\kappa+A_{\nu}\triangle t}}-1\right\},
\end{align*}
which gives
\begin{align*}
\sum_{i=1}^{N_t}(1+\Gamma)^{i-1}P\leq C_{T^f}\left(e^{\frac{C_{T^f}T^f}{\kappa+A_{\nu}\triangle t}}-1\right)\left\{\triangle x+\triangle v+\triangle t+\frac{\triangle x}{\triangle t}\right\}.
\end{align*}
Substituting these estimates into (\ref{a.f.estimate}), we find
\begin{align*}
\|f(T^f)-f^{N_t}\|_{L^{\infty}_q}&\leq C_{T^f}\left(e^{\frac{C_{T^f}T^f}{\kappa+\triangle t}}-1\right)
\left\{(\triangle x)^2+\triangle v+\triangle t+\frac{(\triangle x)^2}{\triangle t}\right\}.
\end{align*}
This completes the proof.
\section*{Acknowledgement}
The work of S.-B. Yun was supported by Samsung Science and Technology Foundation under Project Number SSTF-BA1801-02.\newline


\end{document}